\newcommand{\pnorm}[3]{\| #1 \|_{L^{#2}(#3)}}
\newcommand{\abs}[1]{\left| #1  \right|}
\newcommand{\heat}[1]{(\partial_t - d_{#1}\Delta) #1}
\newcommand{\B}[1]{{\boldsymbol #1}}
\newcommand{\omu}{\overline{\mu}}
\newcommand{\cM}{\mathcal S}
\def\XXint#1#2#3{{\setbox0=\hbox{$#1{#2#3}{\int}$ }
		\vcenter{\hbox{$#2#3$ }}\kern-.6\wd0}}
\newtheorem{theorem}{Theorem}[section]
\newtheorem{lem}{Lemma}[section]
\newtheorem{cor}{Corollary}[section]
\newtheorem{definition}[theorem]{Definition}
\newtheorem{proposition}[theorem]{Proposition}
\newtheorem{remark}[theorem]{Remark}
\newtheorem{ass}[theorem]{Assumption}
\newcommand{\MB}[1]{\todo[author=MB,inline,color=green!30,size=\footnotesize]{\textsf{#1}}}
\newcommand{\CB}[1]{{\color{black}#1}}
\title{Optimal Control of a Reaction-Diffusion Epidemic Model with Noncompliance}
\author{Marcelo Bongarti\thanks{Weierstrass Institute for Applied Analysis and Stochastics, Berlin, Germany
  (bongarti@wias-berlin.de).} \and Christian Parkinson\thanks{Department of Mathematics and Department of Computational Mathematics, Science and Engineering, Michigan State University, East Lansing, MI, USA
  (chparkin@msu.edu).}
\and Weinan Wang \thanks{Department of Mathematics, University of Oklahoma, Norman, OK, USA
  (ww@ou.edu).}}
\date{\today}
\begin{document}
\maketitle

\begin{abstract}
In this paper, we consider an optimal distributed control problem for a reaction-diffusion-based SIR epidemic model with human behavioral effects. We develop a model wherein non-pharmaceutical intervention methods are implemented, but a portion of the population does not comply with them, and this noncompliance affects the spread of the disease. Drawing from social contagion theory, our model allows for the spread of noncompliance parallel to the spread of the disease. The quantities of interest for control are the reduction in infection rate among the compliant population, the rate of spread of noncompliance, and the rate at which non-compliant individuals become compliant after, e.g., receiving more or better information about the underlying disease. We prove the existence of global-in-time solutions for fixed controls and study the regularity properties of the resulting control-to-state map. The existence of optimal control is then established in an abstract framework for a fairly general class of objective functions. Necessary first--order optimality conditions are obtained via a Lagrangian based stationarity system. We conclude with a discussion regarding minimization of the size of infected and non-compliant populations and present simulations with various parameters values to demonstrate the behavior of the model.
\end{abstract}

\section{Introduction}

With the arrival of COVID-19, most nations implemented non-pharmaceutical interventions (NPIs) to prevent disease spread including mandated mask-wearing, stay-at-home orders, and social distancing. While there is solid evidence that such measures are effective in slowing the spread of the disease \cite{SD1,SD2}, public health experts quickly realized that the effectiveness was hampered by noncompliance with NPIs among a nontrivial portion of the population, and that this noncompliance had a nontrivial effect on the disease spread \cite{NC1,NC3,NC4,NC2}. Thus it is important to develop mathematical models for epidemiology which incorporate these sort of human behavioral effects. 

We model noncompliance with NPIs in a manner borrowed from social contagion theory \cite{SC1,SC2}, which is the idea that human behaviors and emotions can ``spread" much like a disease. Among other applications, social contagion theory has been successfully used to model alcohol and drug use \cite{SC3,SC4}, spread of deleterious mental health conditions \cite{SC5}, participation in violent and/or gang-related activity \cite{SC7,SC6}, and teenage sexual behavior \cite{SC8}. Inspired by social contagion theory, \cite{bongarti2022alternative,parkinson2023analysis} consider a compartmental Susceptible-Infectious-Recovered (SIR) type models wherein, parallel to the disease spread, noncompliance with NPIs spreads through the population and affects the dynamics of the disease. Compartmental modeling of epidemics using partial differential equations (PDE) to account for spatially heterogeneous disease spread is now commonplace \cite{RD1,RD4,RD2,RD5,RD3}, though most such manuscripts do not incorporate human behavior. One notable exception is the work of Berestycki et al. \cite{beres2,beres1} where a compartmental ordinary differential equation (ODE) model is generalized by allowing the susceptible population and transmission rate to depend heterogeneously on a newly introduced variable which describes the level of risk acceptance or aversion throughout the population. In doing so, they derive a reaction-diffusion system where the populations are spatially homogeneous, but diffusion occurs with respect to risk aversion variable. In this way, the authors of \cite{beres2,beres1} are also modeling the simultaneous spread of a disease and human behavior (which has a bearing on the spread of the disease), though in a manner which is different from that of \cite{bongarti2022alternative,parkinson2023analysis}. \CB{The authors of \cite{Gumel1, Gumel2} also consider noncompliance with NPIs, though their models are spatially homogeneous and involve simple linear transfer between the compliant and noncompliant populations, in contrast to the spatial heterogeneity and mass-action transfer between the two considered in \cite{bongarti2022alternative,parkinson2023analysis} and in this manuscript. Besides these, there are many other interesting extensions of the basic SIR model which employ ODE and PDE---for example, multi-group models which stratify the population by age, co-morbities or a variety of other factors \cite{MG1,MG2,MG3,MG4,MG5}, and hybrid discrete-continuous models which pair ODE or PDE with an agent-based approach \cite{HybridModel1,HybridModel2,HybridModel3}. However, as stated above, few have incorporated human behavioral concerns in PDE models for epidemiology. }  

In addition to incorporating spatial heterogeneity and noncompliance with NPIs into our model, we are interested in control of an epidemic by some hypothetical governing agency. That is, we would like to model lawmakers' decision making regarding the institution and enforcement of NPIs. Optimal control of PDEs is now a well-established field \cite{OptControlPDE2,Hinze,OptControlPDE1}, and has found specific application in biological models \cite{Garvie1,Garvie2,Lenhart}. Several authors apply optimal control to various aspects of PDE models for epidemiology \cite{Chang1,Chang2,Jang,Zhou,Zhu}, but again, to the authors' knowledge, none of these incorporate human behavioral effects. 

In this manuscript, we present and analyze a reaction-diffusion model for disease spread akin to that of \cite{parkinson2023analysis}. However, we also include control maps which allow the governing agency to deter the spread of disease and noncompliance. We prove existence of local optimal controls for our system and explore the behavior of our model through simulations in various parameter regimes. The remainder of this manuscript is organized as follows. In the ensuing subsection, we present and discuss the system of PDEs that we will analyze. In section \ref{sec:globalExistence}, we discuss global existence for the system when the control maps are fixed. In section \ref{sec:control}, we address optimal control of our system, prove existence and uniqueness of optimal control maps, and characterize the optimal control maps. In section \ref{sec:simulation}, we present the results of simulating our model and discuss practical implications. Finally, in \ref{sec:conclusion}, we include a brief summary of our work and possible avenues for future research. 


\subsection{Our Model} \label{modeling}

As stated above, we consider the model proposed by the second and third authors in \cite{parkinson2023analysis}. For completeness of exposition, we discuss some of the modeling decisions briefly here. 

A standard assumption for compartmental models of epidemiology is that the disease spreads according to the principle of mass action. We assume that noncompliance with NPIs spreads in an analogous way: any time a compliant individual interacts with a noncompliant individual, they have some chance to become noncompliant. In the model, $S,I,R$ will represent the \emph{compliant} susceptible, infected, and recovered individuals, respectively. Throughout the manuscript, noncompliance is denoted with an asterisk, so that $S^*,I^*,R^*$ represent the \emph{noncompliant} susceptible, infectious, and recovered populations, respectively. We also introduce $N^* = S^*+I^*+R^*$ to represent the total noncompliant population. We assume a spatially heterogeneous ``birth" rate $b(x)$ which is the only manner in which new members are introduced into the population, and we assume a constant ``death" rate $\delta$ which is the only manner in which members are removed from the population. All newly introduced members are susceptible, but they are split into the portion $\xi \in [0,1]$ who are \emph{compliant} susceptible and the portion $1-\xi$ who are \emph{noncompliant} susceptible. Following standard SIR notation, we let $\beta$ denote the constant transmission rate, and $\gamma$ denote the constant recovery rate for the disease. Because noncompliance is spreading via mass action as well, we let $\overline \mu$ denote the transmission rate of noncompliance. We assume that among the compliant population, there is a reduction in infectivity $\underline \alpha \in [0,1]$ due to compliance with NPIs. Accordingly, in any mass action terms representing disease spread, $S$ and $I$ will multiplied by $1-\underline \alpha$, while noncompliant individuals do not receive this reduction in infectivity. All this leads to the reaction-diffusion system: \begin{equation}\begin{aligned}\label{PDE_SYS_pre}
    \heat S &= \xi b(x) -\beta (1-\underline \alpha)SI_M -\omu SN^* - \delta S, \\
	\heat I &= \beta (1-\underline \alpha)SI_M - \gamma I - \omu IN^*  - \delta I, \\
	\heat R &= \gamma I - \omu RN^* - \delta R,\\
	\heat {S^*} &=(1-\xi)b(x) -\beta S^*I_M + \omu SN^* - \delta S^*,\\
	\heat {I^*} &= \beta S^*I_M - \gamma I^* +\omu IN^* - \delta I^*, \\
	\heat {R^*}&= \gamma I^* + \omu RN^* - \delta R^*.
\end{aligned}\end{equation} \CB{Here, for shorthand, we have introduced $I_M = (1-\underline \alpha)I + I^*$ to denote the \emph{actively mixing infectious population}.} These are the infectious individuals who are contributing to disease spread. 

Finally, we incorporate policy-maker controls in three ways. \begin{itemize} 
\item[($i$)] To model increasing use of NPIs, we assume that the policy-maker can achieve an improved reduction in infectivity among the compliant population by choosing $\alpha(x,t) \in [\underline \alpha,1]$. Thus, we will instead multiply $S$ and $I$ by $1-\alpha(x,t)$ in any mass action terms corresponding to disease spread. Choosing $\alpha(\cdot,\cdot) \equiv \underline \alpha$ would correspond to the uncontrolled, ``baseline" case, whereas choosing $\alpha(\cdot,\cdot)\equiv 1$ would correspond to the maximally controlled case, which will entirely halt disease spread among the compliant populations.
    \item[($ii$)] To model strategies such as public service announcements or educational campaigns aimed at deterring noncompliant behavior, the policy-maker can achieve a reduction in the infectivity of noncompliance given by $\mu(x,t) \in [0,\omu]$. Having chosen the values $\mu(x,t)$, the infectivity of noncompliance will be $\omu - \mu(x,t)$. Here the uncontrolled case is $\mu(\cdot,\cdot)\equiv 0$, where there is no reduction in infectivity of noncompliance, whereas the maximally controlled case is $\mu(\cdot,\cdot) \equiv \overline \mu$ whereupon spread of noncompliance is deterred as much as possible. 
    \item[($iii$)] To model intervention strategies such as educational programs aimed specifically at the noncompliant population, we assume that the policy-maker can introduce a ``recovery" rate for noncompliance $\nu(x,t) \in [0,\overline \nu]$. Here $\overline \nu$ is the maximal achievable recovery rate from noncompliance. While the introduction of ``recovery" from noncompliance may seem artificial, social scientists have observed that extreme opinions often die out or regress to mean over time \cite{RM}, so there may be precedent to include some nonzero baseline rate of recovery as in \cite{parkinson2023analysis} (this observation also lends credence to the decision of \cite{beres2,beres1} to model risk acceptance/aversion using diffusion which will have an averaging effect).  
\end{itemize} The maximally controlled scenario will be most effective in slowing disease spread. However, as we will see in section \ref{sec:control}, we will be interested in minimizing cost functionals which (1) decrease with decreased disease spread and (2) increase with increasing controls, so there will be a tradeoff for the policy-maker to assess. 

Incorporating these controls leads to the system which will be of interest for the remainder of the manuscript: \begin{equation}\begin{aligned}\label{PDE_SYS}
    \heat S &= \xi b(x) -\beta (1-\alpha(x,t))SI_M -(\omu-\mu(x,t)) SN^* + \nu(x,t) S^*  - \delta S, \\
	\heat I &= \beta (1-\alpha(x,t))SI_M - \gamma I - (\omu-\mu(x,t)) IN^* + \nu(x,t) I^* - \delta I, \\
	\heat R &= \gamma I - (\omu-\mu(x,t)) RN^* + \nu(x,t) R^*- \delta R,\\
	\heat {S^*} &=(1-\xi)b(x) -\beta S^*I_M + (\omu-\mu(x,t)) SN^* - \nu(x,t) S^* - \delta S^*,\\
	\heat {I^*} &= \beta S^*I_M - \gamma I^* +(\omu-\mu(x,t))IN^* - \nu(x,t) I^* - \delta I^*, \\
	\heat {R^*}&= \gamma I^* + (\omu-\mu(x,t)) RN^* - \nu(x,t) R^* - \delta R^*.
\end{aligned}\end{equation} \CB{We note that the control map $\alpha(x,t)$ also now appears inside the actively mixing infectious population: $I_M = (1-\alpha(x,t))I+I^*$.} We assume that these dynamics take place in an open, bounded, simply connected domain $\Omega \subset \mathbb R^n$ which has Lipschitz boundary (in application $n = 2$ is perhaps most natural, but mathematically, we can just as easily analyze the problem for general $n \in \mathbb N$). We will be most interested in finite horizon control, so we assume $t \in [0,T]$. We assume all constant parameters are positive, and that the birth rate $b \in L^\infty(\Omega)$ is nonnegative. The system is equipped with nonnegative initial data $S_0,I_0,R_0,S^*_0,I^*_0,R^*_0 \in L^\infty(\Omega),$ and zero flux boundary conditions $\nabla S \cdot \B n = 0$ on $\Gamma := \partial \Omega$ (and similarly for all other populations).

To assist the reader with all of this notation, all variables, controls, and parameters are summarized in figure \ref{fig:params}, and the flow diagram for the system is included in figure \ref{fig:flowDiagram}. With this, we commence with analysis of \eqref{PDE_SYS}.

\begin{figure}[b!]
\centering
\begin{tabular}{|l l|}
\hline
    Quantity & Description\\
    \hline
    $S(x,t)$ & compliant susceptible population\\
    $I(x,t)$ & compliant infectious population\\
    $R(x,t)$ & compliant recovered population\\
    $S^*(x,t)$ & noncompliant susceptible population\\
    $I^*(x,t)$ & noncompliant infectious population\\
    $R^*(x,t)$ & noncompliant recovered population\\
    $N^*(x,t)$ & total noncompliant population ($N^* = S^* + I^* + R^*)$\\ 
    $b(x)$ & natural birth rate $(b \in L^\infty(\Omega), b \ge 0)$\\
    $\xi$ & portion of newly introduced population which is compliant ($\xi \in [0,1]$)\\
    $\delta$ & natural death rate ($\delta > 0$)\\
    $\beta$ & natural infectivity strength of the disease $(\beta > 0)$\\
    $\gamma$ & natural recovery rate for the disease $(\gamma > 0)$\\
    $\underline \alpha$ & baseline reduction in infectivity due to compliance ($\underline \alpha \in (0,1]$)\\
    $\alpha(x,t)$ & {\bf control variable}: further reduction in infectivity due to compliance $(\alpha(\cdot,\cdot) \in [\underline \alpha, 1])$\\
    $\omu$ & baseline infectivity strength of noncompliance ($\omu > 0$)\\ 
    $\mu(x,t)$ & {\bf control variable}: reduction in infectivity of noncompliance ($\mu(\cdot,\cdot) \in [0,\omu]$)\\
    $\overline \nu$ & maximal achievable rate of recovery from noncompliance ($\overline \nu > 0)$\\
    $\nu(x,t)$ & {\bf control variable}: recovery rate for noncompliance ($\nu(\cdot,\cdot) \in [0,\overline \nu]$)\\\hline
\end{tabular}
\caption{A list of state variables, control maps, and parameters for \eqref{PDE_SYS}.}
\label{fig:params}
\end{figure}

\begin{figure}[t!]
\centering
\includegraphics[width=0.5\textwidth]{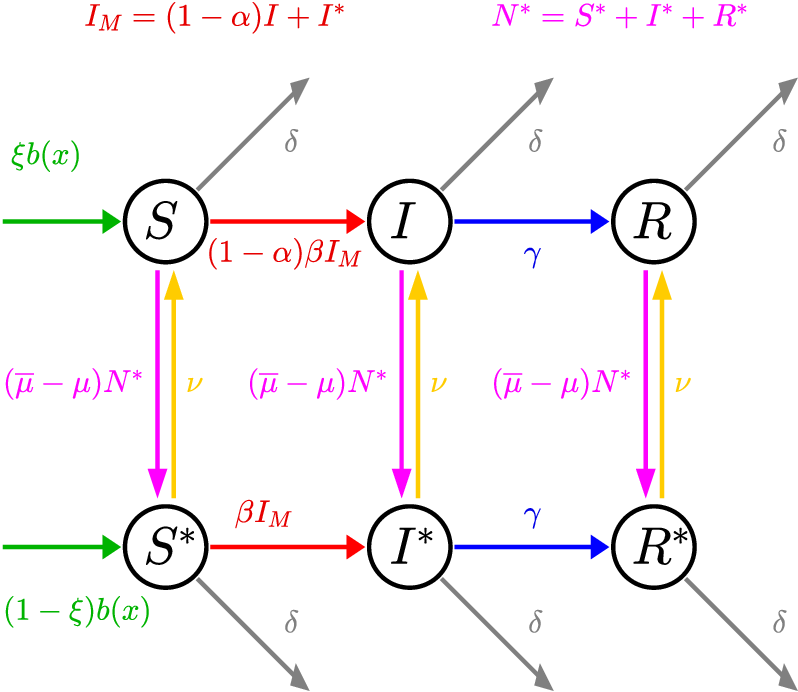}
\caption{The flow diagram for \eqref{PDE_SYS}. Any arrow flowing out of a population indicates flow proportional to the population it leaves. Here $I_M = (1-\alpha)I + I^*$ denotes the \emph{actively mixing infectious population} (i.e., those who contribute to disease spread).}
\label{fig:flowDiagram}
\end{figure}

\section{Global wellposedness with fixed controls} \label{sec:globalExistence}

Henceforth we define the solution vector $y = (y_1,y_2,y_3,y_4,y_5,y_6) = (S,I,R,S^*,I^*,R^*)$ and the control vector $u = (u_1,u_2,u_3) = (\alpha,\mu,\nu)$. 

In this paper we adopt M. Pierre's notion of classical solution defined in \cite[p. 419]{pierre}. For a constant control vector, global existence of a unique solution for \eqref{PDE_SYS} is proven in \cite{parkinson2023analysis}. The proof for nonnegative, bounded controls is essentially identical. We repeat the skeleton of the argument here, offer some formal reasoning, and refer the reader to \cite{parkinson2023analysis} for rigorous proofs. In what follows, for any $t>0$, we define the parabolic domain $Q_t := \Omega \times (0,t)$. Here $y$ satisfies the reaction-diffusion system \begin{equation}\label{VEC_SYS}
    \begin{split} 
    &y_t - D\Delta y = F(y,u) \,\,\, \mbox{in} \ Q_T \\
    &y(\cdot,0) = y_0 \in [L^\infty(\Omega)]^6\\
    &\nabla y_i \cdot \B n = 0 \,\,\, \text{on} \,\, \Gamma, \,\,\, \text{ for } \, i = 1,\ldots,6
    \end{split}
\end{equation} where $F = (f_1,f_2,f_3,f_4,f_5,f_6) : \mathbb R^6 \times \mathbb R^3 \to \mathbb R^6$ denotes the right hand side of \eqref{PDE_SYS} and $D$ is a diagonal matrix containing the diffusion coefficients (which for the purposes of this section, we will also relabel $d_1,d_2,d_3,d_4,d_5,d_6$). 

Assuming that initial data lies in $L^\infty(\Omega)$, local existence of a classical solution follows because the nonlinearities in $F$ are locally Lipschitz \cite[Theorem 14.2]{smoller}. 
Global existence then follows assuming we can establish \emph{a priori} $L^\infty$ bounds on finite time intervals $[0,T]$. In essence, the argument is that if the solution of \eqref{PDE_SYS} fails to exist past some finite time $T$, then one of components must blow up before (or at) time $T$ (see \cite[Lemma 1.1]{pierre} and the references therein). Thus the goal is to prove the required $L^\infty$ bound. The general strategy of the proof is to leverage mass bounds into $L^\infty$ bounds. We do this in a series of lemmas. The first lemma provides nonnegativity of solutions, assuming that initial data is nonnegative. 

\begin{lem}\label{nonnegativity}
System \eqref{VEC_SYS} $($or equivalently \eqref{PDE_SYS}$)$ preserves nonnegativity in the sense that if $y_0 \ge 0$, then $y(\cdot,t) \ge 0$ as long as the solution exists. $[$Note, that these are vector-valued quantities; we interpret these inequalities componentwise$]$.
\end{lem}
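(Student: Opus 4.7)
The plan is to use a standard energy estimate on the negative parts of each component, exploiting the quasi-positive structure of $F$ together with the local $L^\infty$ bound guaranteed by the local existence theory (which then closes a Gronwall argument). First I would verify quasi-positivity of $F$: for each index $i$, setting $y_i = 0$ while keeping $y_j \ge 0$ for $j \ne i$, a direct inspection of \eqref{PDE_SYS} shows $f_i \ge 0$. For example, $f_1|_{S=0} = \xi b(x) + \nu S^* \ge 0$; $f_2|_{I=0} = \beta(1-\alpha)S I^* + \nu I^* \ge 0$ (here I use that $I_M = I^*$ when $I=0$); and similarly for the remaining four equations. The structural point is that every negative term in $f_i$ carries a factor of $y_i$ itself.

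Next, let $y$ be the local-in-time classical solution on some interval $[0,\tau]$ and set $w_i := (y_i)_- = \max\{-y_i, 0\} \ge 0$. Testing the $i$-th equation of \eqref{VEC_SYS} against $-w_i$, integrating over $\Omega$, and using the Neumann boundary condition to integrate the Laplacian by parts yields
\begin{equation*}
\tfrac{1}{2}\tfrac{d}{dt}\|w_i\|_{L^2(\Omega)}^2 + d_i \|\nabla w_i\|_{L^2(\Omega)}^2 = -\int_\Omega w_i\, f_i(y,u)\,dx.
\end{equation*}
Since $w_i$ is supported in $\{y_i < 0\}$, on that set I write $y_i = -w_i$ and substitute into $f_i$. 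Terms of the form $-\delta y_i^2$ and $y_i \cdot (\text{nonnegative source})$ contribute nonpositively to the right-hand side and can be discarded; the remaining bilinear interaction terms are controlled using the local $L^\infty$ bound $\|y_j\|_\infty \le M$, which reduces every would-be cubic $w$-interaction to a quadratic one of the form $w_i^2$ or $w_i w_j$. An application of Young's inequality and summation over $i = 1,\ldots,6$ then produces
\begin{equation*}
\tfrac{d}{dt}\sum_{i=1}^6 \|w_i\|_{L^2(\Omega)}^2 \le C \sum_{i=1}^6 \|w_i\|_{L^2(\Omega)}^2,
\end{equation*}
where $C$ depends on $M$, $\Omega$, and the (bounded) controls. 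Since $y_0 \ge 0$ implies $w_i(\cdot,0) \equiv 0$, Gronwall's inequality forces $w_i \equiv 0$ on $[0,\tau]$, i.e., $y \ge 0$ componentwise. Iterating this on any interval where the local $L^\infty$ bound holds gives nonnegativity for the entire existence interval.

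The main obstacle is the coupling in Step~3: if one tried to prove nonnegativity componentwise in isolation, the sign of $I_M$ or $N^*$ appearing in $f_1$ (say) cannot be asserted a priori, so a single-equation maximum principle fails. The remedy is precisely the simultaneous argument above, where the local $L^\infty$ bound on the full state vector tames all the mixed products and the quasi-positivity guarantees that the only surviving contributions to the right-hand side are quadratic in the $w_i$. Beyond that, the argument is otherwise routine, and full details essentially match those in \cite{parkinson2023analysis} with the trivial modification that the controls $\alpha, \mu, \nu$ are now bounded measurable functions rather than constants.
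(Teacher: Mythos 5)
Your argument is correct, but it takes a different route from the paper. The paper disposes of this lemma in one line by invoking the comparison principle with $z\equiv(0,\dots,0)$ as a subsolution, deferring the rigorous justification to \cite{parkinson2023analysis}; you instead give a self-contained Stampacchia-type truncation argument, testing each equation against the negative part $w_i=(y_i)_-$ and closing a Gronwall estimate. Your version makes explicit the two ingredients that the paper's one-liner quietly relies on: quasi-positivity of $F$ (which you verify correctly for all six components, including the observation that $I_M=I^*$ when $I=0$ and $N^*=I^*+R^*\ge 0$ when $S^*=0$) and the need to treat the components simultaneously rather than one at a time, since the signs of $I_M$ and $N^*$ cannot be asserted a priori. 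The local $L^\infty$ bound is exactly what is needed to reduce the mixed products to terms of the form $w_iw_j$ after splitting each $y_j$ into $y_j^+-w_j$ and discarding the nonpositive contributions, so the quadratic differential inequality and Gronwall do close. One small caveat: your slogan that ``every negative term in $f_i$ carries a factor of $y_i$'' is not literally true of sign-indefinite products such as $\beta(1-\alpha)SI_M$ in $f_2$, but your Step-3 decomposition handles these correctly, so this is only a matter of phrasing. In short, your proof is the rigorous version of what the paper gestures at, at the cost of being longer; the paper's subsolution statement is shorter but, as written, leans on a vector comparison principle for a non-monotone coupled system that itself ultimately requires the quasi-positivity you spell out.
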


\begin{proof}
    This follows directly from the comparison principle, noting that $z\equiv (0,0,0,0,0,0)$ is a subsolution of \eqref{PDE_SYS} whenever $y_0 \ge 0$.
\end{proof} 

From here, mass bounds follow easily. We define the total population density $Y = \sum^6_{i=1} y_i$. Using the zero-flux boundary conditions, we have $$\int_\Omega \Delta y_i dx = 0$$ for each $i=1,\ldots, 6$. Thus adding all equations from \eqref{PDE_SYS} and integrating (in space) gives $$\frac d {dt} \|Y(t)\|_{L^1(\Omega)} = \|b\|_{L^1(\Omega)} - \delta \|Y(t)\|_{L^1(\Omega)}, $$ whereupon $$\|Y(t)\|_{L^1(\Omega)} = \frac{\|b\|_{L^1(\Omega)}}{\delta}(1 - e^{-\delta t}) + e^{-\delta t}\|Y_0\|_{L^1(\Omega)} \le \frac{\|b\|_{L^1(\Omega)}}{\delta} + \|Y_0\|_{L^1(\Omega)}.$$ Nonnegativity then guarantees $L^1$ bounds for the individual subpopulations. The majority of the work toward proving global existence is the spent upgrading these $L^1$ bounds into $L^\infty$ bounds. The next lemma establishes elementary $L^p$-norm bounds for reaction-diffusion equations. 

\begin{lem} \label{pnormbound}
    Let $f,v: L^\infty(Q_T)$ and suppose that $v$
    is a classical solution of \begin{align} &(\partial_t - d\Delta)v =f, \,\,\,\, {\rm{in}} \ Q_T, \\ &v(\cdot,t) = v_0 \in L^\infty(\Omega), \\
    &\frac{\partial v}{\partial n} = 0 \,\,\, {\rm{on}} \,\, \Gamma.\end{align} Then for any $p \in [1,\infty]$ and any $t \in [0,T]$,  \begin{equation} \label{eq:pnormbound} \|v(t)\|_{L^p(\Omega)} \le \|v_0\|_{L^p(\Omega)} + \int^t_0  \|f(s)\|_{L^p(\Omega)} ds.\end{equation}
\end{lem}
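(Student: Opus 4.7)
The plan is to establish \eqref{eq:pnormbound} first for $p \in (1,\infty)$ via a standard energy estimate, then to obtain the endpoint $p = \infty$ by a direct comparison argument, and finally to recover $p = 1$ by a limiting procedure.

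For $p \in (1,\infty)$, I would multiply the PDE by $p|v|^{p-2}v$ and integrate over $\Omega$. Integration by parts together with the homogeneous Neumann boundary condition removes the boundary contribution and yields
$$\frac{d}{dt}\int_\Omega |v|^p \, dx + p(p-1)d\int_\Omega |v|^{p-2}|\nabla v|^2 \, dx = p\int_\Omega f \, |v|^{p-2} v \, dx.$$
Discarding the nonnegative dissipation on the left and applying H\"older's inequality with exponents $p$ and $p/(p-1)$ on the right gives
$$\frac{d}{dt}\|v(t)\|_{L^p(\Omega)}^p \le p \, \|f(t)\|_{L^p(\Omega)} \, \|v(t)\|_{L^p(\Omega)}^{p-1}.$$
A short manipulation (regularizing $\|v(t)\|_{L^p(\Omega)}^p$ by adding $\varepsilon > 0$ before differentiating to sidestep the possibility that $\|v(t)\|_{L^p(\Omega)} = 0$, then sending $\varepsilon \to 0$) reduces this to $\tfrac{d}{dt}\|v(t)\|_{L^p(\Omega)} \le \|f(t)\|_{L^p(\Omega)}$, and integrating in time produces \eqref{eq:pnormbound}.

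For $p = \infty$, rather than passing to a limit I would argue by comparison. The spatially constant function
$$M(t) \defeq \|v_0\|_{L^\infty(\Omega)} + \int_0^t \|f(s)\|_{L^\infty(\Omega)}\, ds$$
satisfies $M_t - d\Delta M = \|f(t)\|_{L^\infty(\Omega)} \ge f(x,t)$ pointwise, $M(0) \ge v_0$, and the Neumann condition trivially. The parabolic comparison principle then gives $v \le M$ everywhere, and the same argument applied to $-v$ gives $-v \le M$, whence $\|v(t)\|_{L^\infty(\Omega)} \le M(t)$. For $p = 1$, I would pass to the limit $p \to 1^+$ in the estimate obtained above: since $v_0, v(t), f(s) \in L^\infty(\Omega)$ and $|\Omega| < \infty$, dominated convergence gives $\|v(t)\|_{L^p(\Omega)} \to \|v(t)\|_{L^1(\Omega)}$ (and similarly for $v_0$ and $f(s)$), while the uniform-in-$p$ bound $\|f(s)\|_{L^p(\Omega)} \le \max\{1,|\Omega|\}\,\|f\|_{L^\infty(Q_T)}$ for $p \in [1,2]$ legitimizes passage of the limit under the time integral.

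The main technical obstacle is a regularity one: justifying the time differentiation of $\|v(t)\|_{L^p(\Omega)}^p$ and the integration by parts for a ``classical solution'' in the sense of Pierre. These manipulations are routine when $v$ is smooth, but strictly speaking require a mollification argument—e.g., testing against $p(v^2 + \varepsilon)^{(p-2)/2}v$, carrying out the identity at the regularized level, and sending $\varepsilon \to 0$—to handle the set where $v$ vanishes when $p < 2$. All remaining steps are standard.
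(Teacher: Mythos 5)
Your proposal is correct and follows essentially the same route as the paper: the paper's (sketched) proof also multiplies by $p|v|^{p-2}v$ and integrates for finite $p$, and for $p=\infty$ compares $v$ against $\|v_0\|_{L^\infty(\Omega)} + \int_0^t\|f(s)\|_{L^\infty(\Omega)}\,ds$ via preservation of nonnegativity. The only cosmetic difference is at $p=1$, where you pass to the limit $p\to 1^+$ rather than multiplying directly by the sign of $v$; this is a reasonable (arguably cleaner) way to make the endpoint rigorous and changes nothing of substance.
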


\begin{proof}
In the $p = \infty$ case, the proof follows by defining the function $u(x,t) = \|v_0\|_{L^\infty(\Omega)} + \int^t_0 \|f(s)\|_{L^\infty(\Omega)} ds - v(x,t)$ and proving that the diffusion operator preserves nonnegativity (which can be proven in the spirit of lemma \ref{nonnegativity}). 

In the $p \in [1,\infty)$ case, one multiplies the equation by $p\abs{v}^{p-2}v$ and integrates to derive the desired inequality. 

Full proofs in each case are contained in \cite{parkinson2023analysis}.
 \end{proof}

 \noindent{\bf Remark.} Note that by the comparison principle, lemma \ref{pnormbound} still holds if the assumptions are relaxed to $(\partial_t - d\Delta)v \le f$ and $v(\cdot,0) \le v_0$. \\

 \begin{cor} \label{corToLem}
With the same assumptions as in lemma \ref{pnormbound}, we have $$\|v\|^p_{L^p(Q_t)} \le C(1+\|f\|^p_{L^p(Q_t)})$$ where $C$ is a positive constant depending on $p$ and $T$.
 \end{cor}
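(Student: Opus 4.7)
The plan is to derive the bound by taking the pointwise-in-time estimate from Lemma \ref{pnormbound}, raising it to the $p$-th power, and then integrating in time over $[0,t]$. First, I would apply Lemma \ref{pnormbound} to get
$$ \|v(s)\|_{L^p(\Omega)} \le \|v_0\|_{L^p(\Omega)} + \int_0^s \|f(\tau)\|_{L^p(\Omega)} d\tau $$
for every $s \in [0,t]$. Using the elementary convexity inequality $(a+b)^p \le 2^{p-1}(a^p + b^p)$, this yields
$$ \|v(s)\|_{L^p(\Omega)}^p \le 2^{p-1}\|v_0\|_{L^p(\Omega)}^p + 2^{p-1}\left(\int_0^s \|f(\tau)\|_{L^p(\Omega)} d\tau\right)^p. $$

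Next, I would control the time integral by Hölder's inequality in the $\tau$ variable, namely
$$ \left(\int_0^s \|f(\tau)\|_{L^p(\Omega)} d\tau\right)^p \le s^{p-1}\int_0^s \|f(\tau)\|_{L^p(\Omega)}^p d\tau \le T^{p-1}\|f\|_{L^p(Q_t)}^p, $$
since $s \le t \le T$ and the inner integral is nothing but $\|f\|_{L^p(Q_s)}^p$ by Fubini. Integrating the resulting inequality for $\|v(s)\|_{L^p(\Omega)}^p$ over $s \in [0,t]$ and using $t \le T$ one more time gives
$$ \|v\|_{L^p(Q_t)}^p \le 2^{p-1}T\|v_0\|_{L^p(\Omega)}^p + 2^{p-1}T^p\|f\|_{L^p(Q_t)}^p. $$

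Since $v_0 \in L^\infty(\Omega)$ is fixed data and $\Omega$ is bounded, the quantity $\|v_0\|_{L^p(\Omega)}^p$ is finite, and we can choose $C = C(p,T) > 0$ large enough that $C \ge \max\bigl(2^{p-1}T\|v_0\|_{L^p(\Omega)}^p,\,2^{p-1}T^p\bigr)$, yielding the claimed bound $\|v\|_{L^p(Q_t)}^p \le C(1 + \|f\|_{L^p(Q_t)}^p)$. There is no serious obstacle here; the only thing to watch is the handling of the $p = \infty$ endpoint, in which case the claim reduces directly to the $L^\infty$ form of Lemma \ref{pnormbound} combined with $|Q_t| \le T|\Omega|$, and Hölder can be replaced by the trivial sup bound.
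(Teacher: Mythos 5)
Your proof is correct and takes essentially the same route as the paper's: raise the pointwise-in-time estimate of Lemma \ref{pnormbound} to the $p$-th power, bound $\bigl(\int_0^s \|f(\tau)\|_{L^p(\Omega)}\,d\tau\bigr)^p$ by $s^{p-1}\int_0^s\|f(\tau)\|^p_{L^p(\Omega)}\,d\tau$ (the paper invokes Jensen where you invoke H\"older, but it is the same estimate), and integrate in time. The only cosmetic differences are your sharper constant $2^{p-1}$ in place of the paper's $2^p$ and your aside about the $p=\infty$ endpoint, which the paper does not address.
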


 \begin{proof} When $p > 1$, using Jensen's inequality, we see $$\left(\int^t_0 \|f(s)\|_{L^p(\Omega)}ds\right)^p \le t^{p-1} \int_0^t \|f(s)\|^p_{L^p(\Omega)}ds \leqslant C\|f\|^p_{L^p(Q_t)},$$ where $C$ is a constant depending on $T$ and $p$ (when $p = 1$, this trivially holds with equality). Thus, raising \eqref{eq:pnormbound} to the power $p$ and recalling the standard inequality $(a+b)^p \le 2^p \max\{a^p,b^p\} \le 2^p(a^p+b^p)$ which holds for $a,b \ge 0, p \ge 1,$ we have  $$\|v(t)\|^p_{L^p(\Omega)} \le 2^p(\|v_0\|^p_{L^p(\Omega)} + C\|f\|^p_{L^p(Q_t)}) \leqslant C(1+\|f\|^p_{L^p(Q_t)}).$$ Integrating on $[0,t]$ yields the desired inequality.  
 \end{proof}

 Finally, one of the main difficulties that arises is that the diffusion coefficients in \eqref{PDE_SYS} could be different. If all coefficients were the same, we could add equations together to strategically eliminate nonlinear terms and establish $L^\infty$ bounds one-by-one.  Because this may not be possible, we need a lemma that maintains $L^p$ control of a solution to a reaction-diffusion equation if the diffusion coefficient is changed. This is a key lemma in the ensuing theorem which establishes $L^p$ mass bounds for all $p \in [1,\infty)$.
 
 \begin{lem} \label{keyLemma}
 Let $v,w \in L^p(Q_T)$ $(p \in [1,\infty))$
 satisfy $v(\cdot,0), w(\cdot,0) \in L^\infty(\Omega)$ and \begin{equation}\label{eq:vwbound} (\partial_t - d\Delta)v \le c_1 \partial_t w + c_2 \Delta w, \,\,\,\,\,\, \text{in} \,\,\,\, Q_T, \qquad \text{for some } c_1, c_2 > 0.\end{equation} Then for any $t \in [0,T]$, we have $$\|v\|_{L^p(Q_t)} \le C(1+\|w\|_{L^p(Q_t)})$$ where $C$ is a constant depending on the ambient parameters and initial data.
 \end{lem}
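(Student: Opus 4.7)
The plan is to use a duality argument against the backward heat equation, in the spirit of M.\ Pierre's approach to reaction--diffusion systems with unequal diffusivities. The first step is an algebraic reduction: using the identity $c_1 \partial_t w + c_2 \Delta w = c_1(\partial_t - d\Delta)w + (c_1 d + c_2)\Delta w$, I would set $z := v - c_1 w$ and absorb the $(\partial_t - d\Delta)w$ piece into the left-hand side, arriving at
\[ (\partial_t - d\Delta) z \;\le\; C_0\, \Delta w \quad \text{in } Q_T, \qquad C_0 := c_1 d + c_2, \]
with $z(\cdot,0) \in L^\infty(\Omega)$ and zero Neumann boundary conditions. The advantage is that only $\Delta w$ (and no $\partial_t w$) remains on the right-hand side, which is exactly what can be controlled by testing against a suitable dual problem.

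For $p \in (1,\infty)$, I would bound $\|z_+\|_{L^p(Q_T)}$ by duality. Given any $\theta \in L^{p'}(Q_T)$ with $\theta \ge 0$, I would solve the backward parabolic problem $-\partial_t \phi - d\Delta \phi = \theta$ in $Q_T$, $\phi(\cdot,T) = 0$, $\nabla \phi \cdot \B n = 0$ on $\Gamma$. By the comparison principle $\phi \ge 0$, and classical parabolic $L^{p'}$-maximal regularity yields
\[ \|\Delta \phi\|_{L^{p'}(Q_T)} + \|\phi(\cdot,0)\|_{L^{p'}(\Omega)} \;\le\; C\|\theta\|_{L^{p'}(Q_T)}. \]
Multiplying the reduced inequality by $\phi$, integrating over $Q_T$, and performing two integrations by parts (using $\phi(T)=0$ and the Neumann conditions to discard the boundary contributions), I would obtain
\[ \int_{Q_T} z\, \theta \, dxdt \;\le\; \int_\Omega \phi(\cdot,0)\, z(\cdot,0)\, dx + C_0 \int_{Q_T} w\, \Delta \phi \, dxdt. \]
Hölder's inequality paired with the maximal-regularity bounds on $\phi$ then gives $\int_{Q_T} z\theta\,dxdt \le C(1+\|w\|_{L^p(Q_T)})\|\theta\|_{L^{p'}(Q_T)}$, where the ``1'' absorbs $\|z(\cdot,0)\|_{L^\infty(\Omega)}|\Omega|^{1/p}$ and related initial-data contributions. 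Taking the supremum over admissible $\theta$ yields $\|z_+\|_{L^p(Q_T)} \le C(1 + \|w\|_{L^p(Q_T)})$. Because Lemma \ref{nonnegativity} (applied in the ambient system to which this key lemma is attached) guarantees $v,w \ge 0$, a pointwise check shows $v \le z_+ + c_1 w$, and the triangle inequality finishes the estimate.

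The edge case $p=1$ must be handled separately, since $L^\infty$-maximal regularity is not available. Here I would argue directly: integrating the inequality over $\Omega$ and using the Neumann conditions to kill both Laplacian terms gives $\tfrac{d}{dt}(\|v(t)\|_{L^1(\Omega)} - c_1\|w(t)\|_{L^1(\Omega)}) \le 0$, from which $\|v(t)\|_{L^1(\Omega)} \le \|v(0)\|_{L^1(\Omega)} + c_1\|w(t)\|_{L^1(\Omega)}$ (using nonnegativity), and a time integration produces the claim.

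The principal obstacle is justifying the parabolic $L^{p'}$-maximal regularity estimate for the dual problem with Neumann data, which is classical but nontrivial; the rest of the proof is integration by parts, Hölder, and nonnegativity. A secondary subtlety is that the one-sided nature of the inequality only controls the positive part $z_+$ via duality, so the step invoking nonnegativity of $v$ and $w$ to pass from $z_+$ to $v$ must not be skipped.
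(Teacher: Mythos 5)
Your argument is essentially the paper's own: both bound the $L^p(Q_t)$ norm by duality against the backward problem $-\varphi_t - d\Delta\varphi = g$ with zero terminal data and Neumann conditions, integrate by parts to pass derivatives onto $\varphi$, and invoke the classical $L^{q}$ parabolic maximal-regularity estimate; your preliminary substitution $z = v - c_1 w$ is a cosmetic variant, since the cited estimate already controls $\partial_t\varphi$ as well as $\Delta\varphi$. Your explicit treatment of the $p=1$ case and of the positive-part issue arising from the one-sided inequality are welcome details that the paper's sketch (which defers to \cite{parkinson2023analysis}) leaves implicit.
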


 \begin{proof}
      The strategy is to use the dual definition of the $L^p$-norm: $$\|v\|_{L^p(Q_t)} = \sup \left\{\langle v,g\rangle \, : \, g \in L^q(Q_t), \, \|g\|_{L^q(Q_t)} \le 1, \, g \ge 0\right\}.$$ Now for any such $g$, formulate a dual problem which runs backwards in time: \begin{align} -&\varphi_t - d \Delta \varphi = g \,\,\,\,\, \text{ on } \Omega \times [0,t), \\
     &\varphi(\cdot,t) = 0. 
     \end{align} Multiply \eqref{eq:vwbound} by the nonnegative solution $\varphi$ of this dual problem, integrate to pass all derivatives to $\varphi$ and use the following parabolic regularity estimate found in \cite[Chap. 4, \S9]{lady}, \cite[Chap. 9, \S2]{WuYinWang} $$\pnorm{\partial_t \varphi}{q}{Q_t} + \pnorm{\Delta \varphi}{q}{Q_t} + \sup_{s \in [0,t)} \pnorm{\varphi(s)}q\Omega \le C \pnorm g q{Q_t}$$ to derive the desired inequality.  
 \end{proof}

 We make one last observation before we prove an $L^\infty$ bound which will provide global existence. By assumptions ($i$)-($iii$) on the controls, there are constants $c_{\text{min}}, c_{\text{max}}$ such that $$ 0\le  c_{\text{min}} \le \alpha(x,t), \,\, \,u(x,t), \,\,\, \nu(x,t) \le c_{\text{max}}.$$ This assumption will also come into play in the optimal control analysis in the ensuing section. \CB{We remind the reader that $y = (y_1,y_2,y_3,y_4,y_5,y_6) = (S,I,R,S^*,I^*,R^*)$.} Given this, we note that as long as $y \ge 0$, $F(y) = (f_1(y),f_2(y),f_3(y),f_4(y),f_5(y),f_6(y))$ satisfies \begin{equation} \label{eq:MBS} \begin{split} f_1(y) &\le \xi b(x) + c_{\text{max}}y_4, \\
 f_1(y) + f_2(y) &\le \xi b(x) + c_{\text{max}}(y_4 + y_5), \\
  f_1(y) + f_2(y) +f_3(y) &\le \xi b(x) + c_{\text{max}}(y_4 + y_5 + y_6), \\
   f_1(y) + f_2(y) + f_3(y) + f_4(y) &\le \hphantom{\xi}b(x) + c_{\text{max}}(y_5 + y_6), \\
   f_1(y) + f_2(y) + f_3(y) + f_4(y)+f_5(y) &\le \hphantom{\xi}b(x) + c_{\text{max}}y_6, \\
   f_1(y) + f_2(y) + f_3(y) + f_4(y)+ f_5(y)+f_6(y) &\le \hphantom{\xi}b(x).
 \end{split}\end{equation} The important point is that when successively adding the right hand sides of \eqref{PDE_SYS}, we can bound the partial sums by a constant plus a \emph{linear} function of $y$, while the nonlinear terms either cancel in the summation, or can be dropped because they are nonpositive. It is this structure---along with lemma \ref{keyLemma}---that enables us to prove the next theorem. 

 \begin{theorem} \label{globalLpBound}
     Fix any $p \in [1,\infty)$. The unique local-in-time classical solution of \eqref{VEC_SYS} remains bounded in $L^p(Q_t)$ on any finite subinterval of the maximum interval of existence. That is, for any $T>0$, if the classical solution $y$ of \eqref{VEC_SYS} exists on $[0,T)$, then there is $M >0$ depending on $T$ and the ambient parameters such that $$\max_{1\le i \le 6} \|y_i\|_{L^p(Q_t)} \le M, \,\,\,\,\, \text{ for all } \,\,\, t \in [0,T).$$ 
 \end{theorem}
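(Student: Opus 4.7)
The plan is to combine the nonnegativity from Lemma \ref{nonnegativity} with the triangular mass-control structure \eqref{eq:MBS} and the key Lemma \ref{keyLemma}, the latter being designed precisely to compensate for the fact that the diffusion coefficients $d_1,\ldots,d_6$ need not coincide. The first observation is that, by nonnegativity, $0 \le y_i \le Z_6 := \sum_{j=1}^{6} y_j$ pointwise in $Q_T$, so it suffices to prove a uniform estimate $\|Z_6\|_{L^p(Q_t)} \le M$ for $t \in [0,T)$; such a bound then transfers componentwise to each $y_i$.

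Summing the six equations in \eqref{PDE_SYS} and invoking the final line of \eqref{eq:MBS} yields the mass-controlled inequality $\partial_t Z_6 - \Delta M_6 \le b(x)$ with $M_6 := \sum_{i=1}^{6} d_i y_i$, subject to Neumann boundary conditions. Writing $M_6 = d_{\min} Z_6 + W$ with $W := \sum_{i=1}^{6} (d_i - d_{\min}) y_i \ge 0$, this becomes $(\partial_t - d_{\min} \Delta) Z_6 \le \Delta W + b$, which is exactly the form required by Lemma \ref{keyLemma} (taking $v = Z_6$, $w = W$, $c_1 = 0$, $c_2 = 1$, $d = d_{\min}$, after routinely absorbing the bounded source $b$ by splitting off the contribution solving the inhomogeneous heat equation with source $b$ and zero data). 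The lemma then yields $\|Z_6\|_{L^p(Q_t)} \le C(1 + \|W\|_{L^p(Q_t)})$.

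Since $0 \le W \le (d_{\max}-d_{\min}) Z_6$ pointwise, this reduces to the \emph{a priori} circular estimate $\|Z_6\|_{L^p(Q_t)} \le C + C(d_{\max}-d_{\min}) \|Z_6\|_{L^p(Q_t)}$. Closing this bound is the core technical step: following \cite{parkinson2023analysis}, one iterates the estimate on a sequence of short time slabs, using the $L^1$-in-$x$, $L^\infty$-in-$t$ bound on $Z_6$ already established in the text to control the initial data on each slab, then patches the slab-wise estimates to produce a uniform $L^p(Q_T)$ bound. Once $\|Z_6\|_{L^p(Q_T)} \le M$ is available, nonnegativity immediately gives $\max_i \|y_i\|_{L^p(Q_T)} \le \|Z_6\|_{L^p(Q_T)} \le M$.

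The principal obstacle is precisely this circularity, which stems from the unequal diffusion coefficients and which cannot be resolved by a direct $L^p$-energy estimate, since the gradient cross terms $\sum_{i \ne j} d_i \nabla y_i \cdot \nabla y_j$ arising in such an estimate have no definite sign. Lemma \ref{keyLemma}---a duality-based estimate in the spirit of Pierre's method---combined with the triangular ``mass-control'' structure \eqref{eq:MBS} is the ingredient that overcomes this difficulty; the remaining elements of the argument (nonnegativity, summation of the equations, and the $L^p$-propagation afforded by Lemma \ref{pnormbound} and Corollary \ref{corToLem}) are standard.
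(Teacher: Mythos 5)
There is a genuine gap, and you have in fact pointed at it yourself: your argument terminates in the estimate $\|Z_6\|_{L^p(Q_t)} \le C + C(d_{\max}-d_{\min})\|Z_6\|_{L^p(Q_t)}$, and the proposed repair --- iterating on short time slabs --- does not close it. The constant in Lemma \ref{keyLemma} comes from the maximal parabolic regularity estimate $\|\partial_t\varphi\|_{L^q(Q_t)} + \|\Delta\varphi\|_{L^q(Q_t)} \le C\|g\|_{L^q(Q_t)}$ for the dual problem, and this constant does not tend to $0$ as the length of the time interval shrinks (the $\Delta\varphi$ term has no smallness in short time), so $C(d_{\max}-d_{\min})$ cannot be made less than $1$ by slabbing. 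Worse, even if it could, restarting the estimate on an interior slab requires $L^p(\Omega)$ (in fact $L^\infty(\Omega)$, per the hypotheses of Lemmas \ref{pnormbound} and \ref{keyLemma}) control of the solution at the start of that slab, and the only uniform a priori information available there is the $L^1(\Omega)$ mass bound --- the $L^p$ control at interior times is precisely what the theorem is trying to establish, so the slab argument is itself circular. Lumping all six components into $Z_6$ and $W$ discards the structure that makes the proof work. (A true Pierre-type duality estimate for the sum would use a dual problem with the uniformly elliptic variable coefficient $A = \bigl(\sum_i d_i y_i\bigr)/\bigl(\sum_i y_i\bigr)$, which is not Lemma \ref{keyLemma} and is not what you invoke. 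Minor additional point: Lemma \ref{keyLemma} as stated requires $c_1, c_2 > 0$, whereas you take $c_1 = 0$; that is cosmetic, unlike the circularity.)

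The paper avoids the circularity by exploiting the \emph{triangular} form of \eqref{eq:MBS} rather than only its last line. It introduces auxiliary solutions $z_1,\dots,z_6$ of linear heat equations \eqref{eq:zFuncs} with \emph{zero} initial data, whose sources are the partial-sum majorants. Then $y_1$ is bounded by $z_1$ directly via the comparison in Lemma \ref{pnormbound} (no duality needed for the first component); $y_2 - z_2$ satisfies $(\partial_t - d_2\Delta)[y_2-z_2] \le -(\partial_t - d_1\Delta)y_1$, to which Lemma \ref{keyLemma} applies with $w = y_1$, a quantity \emph{already bounded} at the previous step; and so on down the cascade, yielding $\|y_i\|^p_{L^p(Q_t)} \le C\bigl(1 + \sum_{j\le i}\|z_j\|^p_{L^p(Q_t)}\bigr)$. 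The decisive final observation is that each $z_i$ is driven by a source that is \emph{linear} in $y$, so Lemma \ref{pnormbound} and Corollary \ref{corToLem} give $\|z_i\|^p_{L^p(Q_t)} \le C\bigl(1 + \int_0^t P(s)\,ds\bigr)$ with $P(t) = \sum_j \|y_j\|^p_{L^p(Q_t)}$; the time integral on the right (absent from your estimate) turns \eqref{PZbd} into $P(t) \le C\bigl(1 + \int_0^t P(s)\,ds\bigr)$, which Gronwall closes. Your write-up correctly identifies all the ingredients --- nonnegativity, the mass-control structure, and the duality lemma as the tool for mismatched diffusion coefficients --- but deploys them in an order that produces a fixed-point inequality with no contraction and no Gronwall structure, and hence does not yield the bound.
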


 \begin{proof} The full proof is provided in \cite{parkinson2023analysis}. For completeness, we repeat the outline here.
 
    The proof proceeds from the observation that the system is ``mass-bounded" in the sense of \eqref{eq:MBS}. Given this, we define auxiliary functions $z_i,\,\, i=1,\ldots, 6$ such that \begin{equation} \label{eq:zFuncs} \begin{split} (\partial_t -d_1 \Delta)z_1 &= \xi b(x) + c_{\text{max}}y_4, \\
    (\partial_t -d_2 \Delta)z_2 &= \xi b(x) + c_{\text{max}}(y_4 + y_5), \\
  (\partial_t -d_3 \Delta)z_3 &= \xi b(x) + c_{\text{max}}(y_4 + y_5 + y_6), \\
   (\partial_t -d_4 \Delta)z_4 &= \hphantom{\xi}b(x) + c_{\text{max}}(y_5 + y_6), \\
   (\partial_t -d_5 \Delta)z_5 &= \hphantom{\xi}b(x) + c_{\text{max}}y_6, \\
   (\partial_t -d_6 \Delta)z_6 &= \hphantom{\xi}b(x).
 \end{split} \end{equation} with zero-flux boundary data and homogeneous initial data $z_i(x,0)\equiv 0$ for all $i=1,\ldots,6$. For the remainder of this proof, we fix an arbitrary $t>0$ and an arbitrary $p \in [1,\infty)$, and $C$ will denote a positive constant which changes from line to line and depends on the ambient parameters including $T$. 
 
 With these definitions and using \eqref{eq:MBS}, we see $$(\partial_t - d_1 \Delta)[y_1-z_1] \le 0$$ thus from lemma \ref{pnormbound}, we have $$\|y_1(t) - z_1(t) \|_{L^p(\Omega)} \le \|y_1(0)\|_{L^p(\Omega)},$$ so $$\|y_1(t)\|_{L^p(\Omega)} \le C(1+\|z_1(t)\|_{L^p(\Omega)}).$$ As in the proof of corollary \ref{corToLem}, this yields $$\|y_1\|^p_{L^p(Q_t)} \le C(1+\|z_1\|^p_{L^p(Q_t)}).$$ \CB{Next, note that $$(\partial_t - d_2 \Delta)[y_2 - z_2] + (\partial_t -d_1 \Delta )y_1 = f_1(y) + f_2(y) - (\partial_t - d_2 \Delta)z_2 \le 0 $$ where the inequality follows from \eqref{eq:MBS} and the definition of $z_2$ in \eqref{eq:zFuncs}.} Thus $$(\partial_t - d_2 \Delta)[y_2 - z_2] \le -(\partial_t - d_1 \Delta)y_1.$$ Applying lemma \ref{keyLemma}, we have $$\|y_2-z_2\|_{L^p(Q_t)} \le C(1+\|y_1\|_{L^p(Q_t)})$$ whereupon the reverse triangle inequality and the bound on $\|y_1\|^p_{L^p(Q_t)}$ yield $$\|y_2\|^p_{L^p(Q_t)} \le C(1+\|z_1\|^p_{L^p(Q_t)} + \|z_2\|^p_{L^p(Q_t)}).$$

Continuing in this same manner, we arrive at bounds $$\|y_i\|^p_{L^p(Q_t)} \le C\left(1+\sum^i_{j=1} \|z_j\|^p_{L^p(Q_t)}\right)$$ for each $i=1,\ldots,6.$

Thus, defining $$
    P(t) = \sum^{n}_{j=1} \|y_j\|^p_{L^p(Q_t)}, \hspace{1cm}
    Z(t) = \sum^{n}_{j=1} \|z_j\|^p_{L^p(Q_t)},
$$ we have the inequality \begin{equation} \label{PZbd} P(t) \le C(1+Z(t)).\end{equation} However, each function $z_i$ satisfies $$(\partial_t - d_i\Delta)z_i \le C\left(1+\sum^n_{j=1} y_j\right),$$ so applying lemma \ref{pnormbound} and \ref{corToLem} then integrating on $[0,t]$ yields $$\|z_i\|^p_{L^p(Q_t)} \le C\left(1+\int^t_0 P(s)ds.\right)$$ Inserting this into \eqref{PZbd}, we have $$P(t) \le C\left(1+\int^t_0P(s)ds\right), \,\,\,\,\,\, t \in [0,T).$$ Gronwall's inequality then gives boundedness of $P(t)$, and thus of $\|y_i\|_{L(Q_t)}$ for each $i=1,\ldots,6$.
 \end{proof}

 Finally, we use classical results regarding parabolic regularity to conclude. 

\begin{theorem} \label{thm:globalExistence}
    The unique local-in-time classical solution of \eqref{VEC_SYS} remains bounded in $L^\infty(Q_t)$ on any finite subinterval of the maximum interval of existence, and thus exists globally in time. 
\end{theorem}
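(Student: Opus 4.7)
\emph{Proof proposal.} The plan is to upgrade the space-time $L^p$ bounds of Theorem \ref{globalLpBound} to an $L^\infty(Q_T)$ bound via parabolic smoothing, and then invoke the blow-up criterion from \cite[Lemma 1.1]{pierre} (cited just before Lemma \ref{nonnegativity}) to rule out finite-time blow-up. I would begin by noting that each component $f_i(y,u)$ of the right-hand side of \eqref{PDE_SYS} is a sum of the source term $b(x) \in L^\infty(\Omega)$, linear terms in $y$, and bilinear terms of the form $y_jy_k$, with all coefficients uniformly bounded in $(x,t)$ by the model parameters and the uniform control bounds $0 \le c_{\min} \le u_j \le c_{\max}$. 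Using the componentwise nonnegativity of $y$ from Lemma \ref{nonnegativity}, this yields the pointwise estimate $|f_i(y,u)| \le C(1 + |y|^2)$ with $C$ independent of $(x,t)$.

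Next, fix an arbitrary $T>0$ inside the maximal interval of existence and any exponent $q > (n+2)/2$. Combining the pointwise bound with Theorem \ref{globalLpBound} applied at $p = 2q$ yields
\begin{equation*}
\|f_i(y,u)\|_{L^q(Q_T)} \le C\bigl(1 + \|y\|^2_{L^{2q}(Q_T)}\bigr) \le M_q,
\end{equation*}
with $M_q$ depending only on $T$, $q$, the parameters, and the initial data. Each $y_i$ then solves the scalar linear Neumann problem $(\partial_t - d_i\Delta)y_i = f_i(y,u)$ with $L^\infty(\Omega)$ initial data and $L^q$ source, so the classical $L^q$-maximal regularity for the Neumann heat equation (\cite[Chap. 4, \S9]{lady}, \cite[Chap. 9, \S2]{WuYinWang}) places $y_i \in W^{2,1}_q(Q_T)$, with norm controlled by $\|f_i\|_{L^q(Q_T)} + \|y_{i,0}\|_{L^\infty(\Omega)}$.

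The choice $q > (n+2)/2$ ensures the parabolic Sobolev embedding $W^{2,1}_q(Q_T) \hookrightarrow C(\overline{Q_T}) \subset L^\infty(Q_T)$, producing the uniform bound $\max_{1 \le i \le 6} \|y_i\|_{L^\infty(Q_T)} \le M$ on every $[0,T)$ contained in the maximal interval of existence. The blow-up criterion of \cite[Lemma 1.1]{pierre} then precludes blow-up at any finite time, and the classical solution extends globally to $[0,\infty)$.

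As for the main obstacle: no individual ingredient is difficult in isolation, and the architecture mirrors the analogous argument in \cite{parkinson2023analysis}. The delicate point is keeping all constants uniform across the maximal interval of existence, which relies essentially on the uniform pointwise control bounds introduced before \eqref{eq:MBS} together with the uniformity in $t$ already built into Theorem \ref{globalLpBound}; without these two inputs, the bootstrap in the middle paragraph would degrade as $t$ approaches the putative blow-up time.
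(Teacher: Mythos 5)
Your proposal is correct and follows essentially the same route as the paper: bound the quadratic nonlinearities in $L^q(Q_T)$ via the space-time $L^p$ estimates of Theorem \ref{globalLpBound}, apply the parabolic regularity estimates from \cite[Chap.~4, \S9]{lady}, \cite[Chap.~9, \S2]{WuYinWang}, and conclude $L^\infty$ boundedness by Sobolev embedding, ruling out blow-up. The only (harmless) difference is that you work in the anisotropic space $W^{2,1}_q(Q_T)$ with the sharp parabolic exponent $q>(n+2)/2$, whereas the paper states the estimate in terms of $\partial_t y_i,\nabla y_i\in L^p(Q_t)$ and embeds $W^{1,p}(Q_t)$ for $p$ large.
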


\begin{proof}
A classic result regarding parabolic regularity (see \cite[Ch. 4, \S 9]{lady}, \cite[Chap. 9, \S2]{WuYinWang} for example) states that for each $i=1,\ldots,6$ and any $p\in [1,\infty)$, \begin{equation} \label{eq:paraEst}\|\partial_t y_i\|_{L^p(Q_t)} + \|\nabla y_i\|_{L^p(Q_t)} \le C(\|y_i(0)\|_{L^p(Q_t)} + \|f_i(y)\|_{L^p(Q_t)})\end{equation} where $f_i$ is the right hand side of the corresponding equation. Since all nonlinearities are quadratic, we have $$\abs{f_i(y)} \le C\left(1 + \sum^6_{j=1} \abs{y_i} + \sum^6_{j=1} \abs{y_i}^2\right),$$ so $$\|f_i(y)\|^p_{L^p(Q_t)} \le C\left(1 + \sum^6_{j=1} \|y_i\|^p_{L^p(Q_t)} + \sum^6_{j=1} \|y_i\|^{2p}_{L^{2p}(Q_t)}\right).$$ Inserting this bound in \eqref{eq:paraEst} and applying theorem \ref{globalLpBound} shows that $\partial_t y_i, \nabla y_i$ are bounded in $L^p(Q_t)$, and thus $y_i \in W^{1,p}(Q_t)$ for any $p \in [1,\infty).$ Choosing $p > n$, the Sobolev embedding theorem guarantees that $y_i \in L^\infty(Q_t)$ whereupon we have global in-time existence. 
\end{proof}

\section{Optimal Control} \label{sec:control}

Recall that we work with the state variables $y = (S,I,R,S^*,I^*,R^*)^\top$ and control maps $u = (\alpha, \mu, \nu)$. It follows from \cite[Eq. (1.5), p. 419]{pierre} that
%
%
%
if $y$ is a classical solution then $y \in [W_{loc}^{1,p}(0,T; W^{2,p}(\Omega))]^6$ which in particular means that both $y_i$ and $D_x y_i$ have traces in $L_{loc}^p((0,T) \times \Gamma)$, $i = 1,...,6.$ Moreover, from \cite[Theorem 3.5, p. 11]{parkinson2023analysis} it follows that given $y(0) \in [L^\infty(\Omega)]^6$ with each component being non-negative and $u \in [L^\infty(\mathbb{R}_+ \times \Omega)]^3$, there exists a unique $y \in [W_{loc}^{1,p}(\mathbb{R}_+;W^{2,p}(\Omega))]^6$.

\subsection{Existence of optimal control}\label{eoc}

With $y = (y_1,y_2,y_3,y_4,y_5,y_6) = (S,I,R,S^*, I^*, R^*)$ and $u = (u_1, u_2, u_3) = (\alpha, \mu, \nu)$ we introduce state affine polynomials $p_i: \mathbb{R}^6 \to \mathbb{R}$ of the form $p_i(\vec{x}) = \vec{a}_i^\top \vec{x} + k_i$ with $\vec{a}_i \in \mathbb{R}^6$ and $k_i \in \mathbb{R}$ and control affine polynomials $q_i: \mathbb{R}^3 \to \mathbb{R}$ of the form $q_i(\vec{x}) = \vec{c}_i^\top \vec{x} + l_i$ with $\vec{c} \in \mathbb{R}^3$ and $l_i \in \mathbb{R}.$  

Choosing a time horizon $T>0$, a number $m_c \in \mathbb{N}$ and a vector $P_s = (p_s^1, \cdots, p_s^m)$ ($p_s^i \geqslant 1)$ for some $m_s \in \mathbb{N}$ fixed we define our cost functional $\mathcal{J}(y,u)$ to be of the form 
\begin{align}\label{e.w10151_gen}
    \mathcal{J}(y,u) 
    &= \sum\limits_{i = 1}^{m_s} \dfrac{\lambda_i}{p_s^i}\|p_i(y)\|_{L^{p_s^i}(Q_T)}^{p_s^i} + \sum\limits_{i = 1}^{m_c} \dfrac{\zeta_i}{2}\|q_i(u)\|_{L^{2}(Q_T)}^{2} = \sum\limits_{i=1}^{m_s} \mathcal{J}_s^i(y,u) + \sum\limits_{i=1}^{m_c} \mathcal{J}_c^i(u).
\end{align}
For an example of a meaningful $\mathcal{J}$ we can take $m_s = 2$,  $p_s = (1,1), m_c = 3$ and the polynomials $p_1(\vec{x}) = x_2 + x_5, p_2(\vec{x}) = x_4 + x_5 + x_6$, $q_1(\vec{x}) = x_1 - \underline \alpha$, $q_2(\vec{x}) = x_2$ and $q_3(\vec{x}) = x_3.$ See next section for a biological interpretation of such cost functional and simulations using it. 
The lemma below, which we do not prove, follows from properties of norms.
\begin{lem}
    The functional $\mathcal{J}: [L^{\max\{p_s^i\}}(Q_T)]^6 \times [L^2(Q_T)]^3 \to \mathbb{R}^+$ is weakly lower semi-continuous.
\end{lem}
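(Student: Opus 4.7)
The plan is to reduce weak lower semicontinuity of $\mathcal{J}$ to weak lower semicontinuity of each summand, since nonnegative linear combinations of wlsc functionals are wlsc and the coefficients $\lambda_i/p_s^i, \zeta_i/2$ are positive. Each individual summand in \eqref{e.w10151_gen} has the structure $v \mapsto \tfrac{1}{p}\|Lv+k\|_{L^p(Q_T)}^p$, where $L(\cdot)+k$ is a continuous affine map into $L^p(Q_T)$ and $p \geq 1$. Since $w \mapsto \|w\|_{L^p(Q_T)}^p$ is convex and continuous for $p \geq 1$, and the composition of a convex continuous function with a continuous affine map remains convex and continuous, each summand is convex and strongly continuous. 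Mazur's theorem then delivers weak lower semicontinuity on the ambient Banach space, with no need for reflexivity (which would fail, for example, when some $p_s^i = 1$).

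The two families of summands to check are as follows. For the state summands $\mathcal{J}_s^i$, I would first note that since $Q_T$ has finite measure, the embedding $L^{P_s}(Q_T) \hookrightarrow L^{p_s^i}(Q_T)$ is continuous (with $P_s = \max_j p_s^j$), so any sequence $y_n \rightharpoonup y$ in $[L^{P_s}(Q_T)]^6$ also satisfies $y_n \rightharpoonup y$ in $[L^{p_s^i}(Q_T)]^6$. The affine map $y \mapsto p_i(y) = \vec a_i^{\,\top} y + k_i$ is a bounded linear map plus a constant (the constant $k_i$ lies in $L^{p_s^i}(Q_T)$ because $|Q_T|<\infty$), hence weakly continuous, so $p_i(y_n) \rightharpoonup p_i(y)$ in $L^{p_s^i}(Q_T)$. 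The $L^{p_s^i}$-norm is weakly lower semicontinuous, so $\|p_i(y)\|_{L^{p_s^i}(Q_T)}^{p_s^i} \leq \liminf_{n\to\infty} \|p_i(y_n)\|_{L^{p_s^i}(Q_T)}^{p_s^i}$. The control summands $\mathcal{J}_c^i$ are handled identically, with $L^2(Q_T)$ replacing $L^{p_s^i}(Q_T)$ and a sequence $u_n \rightharpoonup u$ in $[L^2(Q_T)]^3$ yielding $q_i(u_n) \rightharpoonup q_i(u)$ in $L^2(Q_T)$.

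There is no substantive obstacle. The only points that require attention are verifying that the constants $k_i, l_i$ are genuinely members of the relevant $L^p$ spaces (automatic from $|Q_T|<\infty$) and handling the case $p_s^i = 1$ via Mazur rather than via a reflexivity-based duality argument. Strong continuity plus convexity, together with Mazur's theorem, then deliver wlsc of every summand, and summing gives the claim.
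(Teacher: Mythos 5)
Your proof is correct and is precisely the argument the paper alludes to when it says the lemma ``follows from properties of norms'': each summand is the composition of a continuous affine map with the convex, strongly continuous functional $w \mapsto \|w\|_{L^p(Q_T)}^p$, hence convex and continuous, hence weakly lower semicontinuous by Mazur, and a nonnegative combination of such terms inherits the property. Your attention to the case $p_s^i = 1$ (avoiding reflexivity) and to the embedding $L^{\max_j p_s^j}(Q_T) \hookrightarrow L^{p_s^i}(Q_T)$ on the finite-measure domain covers the only points that need care.
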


The \CB{optimal} control problem is then formulated as
\begin{equation}\label{e.w09181_cpyu}
    \min \{\mathcal{J}(y,u); u \in U^{\CB{p}}_{ad} \ \mbox{and} \ y \ \mbox{solves \eqref{PDE_SYS}}\}
\end{equation}
where \begin{equation}
    \label{Uad} U^{\CB{p}}_{ad} \equiv \{u \in [L\CB{^p}(Q_T)]^3; \ 0 \leqslant A_i \leqslant u_i(x,t) \leqslant B_i, \,\,\,\, i = 1,2,3\} \subset [L^\infty(Q_T)]^3
\end{equation} \CB{with $A_i, B_i \in \mathbb{R}$ and $p \in [1,\infty)$.}   Note that $U^{\CB{p}}_{ad}$ is a closed, bounded, convex subset of $[L^p(Q_T)]^3 \equiv U^{\CB{p}}$ (though not a subspace). To recall, for our purposes, we have $u_1 = \alpha, u_2 = \mu, u_3 = \nu.$ Thus \eqref{Uad} specifies that each of the control maps takes values within some bounded, nonnegative interval, as designed in section \ref{modeling} above.

\CB{\begin{remark}
    The inclusion to $[L^\infty(Q_T)]^3$ in \eqref{Uad} holds for any $p \geqslant 1.$ This is in fact necessary because we consider $L^2$ norms of the controls in the objective function \eqref{e.w10151_gen} and because we need $p>n$ to guarantee that solutions are in $L^\infty$ (see Theorem \ref{thm:globalExistence}). However, we still need to restrict the definition of admissible set to finite $p$ for topological reasons. 
    In order to avoid confusion, we use the superscript $p$ in $U_{ad}^p$ to indicate what topology is being used in the control set.
\end{remark}}

The wellposedness theory defines a control-to-state operator $\cM: U_{ad}^{\CB{p}} \to W_{loc}^{1,p}(0,T;\tilde Y)$ with $$\tilde Y = \{y \in \CB{[W^{2,p}(\Omega)]^6}; \nabla y \cdot \mathbf{n} = 0\}.$$ In particular, due to uniqueness, problem \eqref{e.w09181_cpyu} can thus be reduced to \begin{equation}\label{ct1}    \min_{u \in U_{ad}^{\CB{p}}} J(u) \equiv \mathcal{J}(\cM(u),u).
\end{equation}

Our next goal is to show that an optimal control exists. The standard \emph{direct} method usually takes the following path: \begin{itemize}
    \item[\bf(i)] One assumes $U_{ad}^{\CB{p}} \neq \emptyset$ so $\cM^{-1}(U_{ad}^{\CB{p}}) \neq \emptyset$, then the $\inf\{J(u); u \in U_{ad}^{\CB{p}}\}$ exists. Call it $d \in \mathbb{R}$.
    \item[\bf(ii)] Properties of the infimum provide a sequence $u_n \in U_{ad}^{\CB{p}}$ such that $J(u_n) \to d.$
    \item[\bf(iii)] One needs the structure of $U_{ad}^{\CB{p}}$ to allow the sequence $(u_n)$ (or a subsequence) to have a limit in $U_{ad}^{\CB{p}}$. This is, of course, closely connected to compactness properties of $U_{ad}^{\CB{p}}$ which is usually too much to ask for when infinite dimensional spaces are involved. The reasonable assumption is then weak (sequential) compactness of $U_{ad}^{\CB{p}}$ which is usually achieved via closedness and reflexiveness. In our case we even have boundedness of $U_{ad}^{\CB{p}}$, so our $U_{ad}^{\CB{p}}$ is weak sequentially compact, and any such sequence can be assumed to be weakly convergent (perhaps along a subsequence) to some ${u^\circ} \in U_{ad}^{\CB{p}}.$
    \item[\bf(iv)] We know that $J(u_n) \to d$ and $u_n \rightharpoonup {u^\circ} \in U_{ad}^{\CB{p}}.$ The next goal is then to relate $J({u^\circ})$ with $d.$ If one is able to show that $J({u^\circ}) = d$, then ${u^\circ}$ is an optimal control (and in this case we can replace $\inf$ by $\min$). The usual (minimal) assumption one makes on $J$ is that $u \mapsto J(u)$ is weakly lower semi-continuous (w.l.s.c.), so that if $u_n \rightharpoonup {u^\circ}$ in $U_{ad}^{\CB{p}}$ then $$J({u^\circ}) \leqslant \liminf\limits_{n\to\infty}J(u_n).$$ It is clear that this would imply that ${u^\circ}$ is an optimal control. In our case, however, $J(u) = \mathcal{J}(\cM(u),u)$, hence any continuity property of $J$ depends on the regularity properties of $\cM.$ Thus we need to guarantee that $\cM$ is weakly continuous, i.e., if $u_n \rightharpoonup {u^\circ}$ in $U$ then $\cM(u_n) \rightharpoonup \cM({u^\circ})$ in the appropriate space required by the cost functional.
\end{itemize}

The main Theorem in this subsection is as follows (see remark \ref{wcs}).
\begin{theorem}\label{wtos}
Let $Z = L^{\max\{p_s^i\}}(Q_T)$. The map $\cM:U_{ad}^{\CB{p}} \to \CB{Z^6}$ is {weak--to--strong} continuous, i.e., given a sequence $u_n$ in $U_{ad}^{\CB{p}}$ 
	\begin{equation}
	u_n \rightharpoonup u \ \mbox{\rm in} \ U^{\CB{p}} \Longrightarrow \cM(u_n)\to \cM(u) \ \mbox{\rm in} \ Z^{\CB{6}}.
	\end{equation}
\end{theorem}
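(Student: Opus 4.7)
The plan is the classical direct-method compactness argument: extract a convergent subsequence of $y_n := \cM(u_n)$ via the uniform bounds obtained in Section \ref{sec:globalExistence}, pass to the limit in the weak formulation of \eqref{PDE_SYS}, and identify the limit through the uniqueness part of the wellposedness theory.

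First I would observe that $U_{ad}^{p} \subset [L^\infty(Q_T)]^3$ with a uniform upper bound $c_{\max}$, so $(u_n)$ is uniformly bounded in $L^\infty$ regardless of the $L^p$ topology in which it converges; along a subsequence we may therefore assume $u_n \stackrel{\ast}{\rightharpoonup} u$ in $[L^\infty(Q_T)]^3$. Consequently the constants appearing in \eqref{eq:MBS} and in the proofs of Theorem \ref{globalLpBound} and Theorem \ref{thm:globalExistence} can be chosen independently of $n$, yielding uniform bounds $\sup_n \norm{y_n}_{L^\infty(Q_T)^6} < \infty$ and $\sup_n \norm{y_n}_{W^{1,q}(Q_T)^6} < \infty$ for every $q \in [1,\infty)$. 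A standard parabolic compactness result (Rellich--Kondrachov on the bounded cylinder $Q_T$, or Aubin--Lions) then allows me to extract a further subsequence with $y_n \to \tilde y$ strongly in $L^r(Q_T)^6$ for every $r \in [1,\infty)$ and almost everywhere in $Q_T$; in particular, $y_n \to \tilde y$ in $Z^6$.

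Next I would pass to the limit in each term of the weak formulation of \eqref{PDE_SYS}. Products of two states pose no issue, since strong $L^r$ convergence combined with the uniform $L^\infty$ bound gives strong convergence of the products. Products involving the controls---most notably the triple product $\alpha_n S_n I_n$ arising from $I_{M,n} = (1-\alpha_n)I_n + I_n^*$---are handled by the weak-$\ast$/strong rule: if $u_n \stackrel{\ast}{\rightharpoonup} u$ in $L^\infty(Q_T)$ and $w_n \to w$ strongly in $L^1(Q_T)$, then $u_n w_n \rightharpoonup u w$, which is enough to pass to the limit against smooth test functions. This shows $\tilde y$ is a classical solution of \eqref{VEC_SYS} driven by $u$ with the prescribed initial and boundary data. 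The uniqueness statement quoted at the start of Section \ref{sec:control} then gives $\tilde y = \cM(u)$, and a standard Urysohn subsequence argument promotes subsequential to full-sequence convergence in $Z^6$.

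The scheme itself is standard, and I do not expect any serious obstacle once the uniform bounds of Section \ref{sec:globalExistence} are in hand. The one place that requires genuine care is verifying that each nonlinearity of \eqref{PDE_SYS} fits into the weak-$\ast$/strong framework, particularly the mass-action coupling terms where a control multiplies a product of two states. What makes this work cleanly is precisely the $L^\infty$ bound on the states established in Theorem \ref{thm:globalExistence}, which guarantees that all the relevant state products are bounded and strongly convergent in every $L^r(Q_T)$.
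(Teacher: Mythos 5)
Your overall architecture is the same as the paper's: uniform bounds on $\cM(u_n)$, compactness to extract a convergent subsequence, identification of the limit with $\cM(u)$ via uniqueness of solutions, and Urysohn's subsequence principle to recover convergence of the full sequence. The paper organizes this slightly differently---it first proves Lipschitz continuity of $\cM$ into $Y^6$, deduces uniform boundedness in the reflexive space $\mathcal{Y}^6$, extracts a \emph{weak} limit, identifies it through a weak-closedness lemma, and only at the end upgrades weak to strong convergence via the compact embedding $\mathcal{Y}\hookrightarrow Z$---whereas you invoke Aubin--Lions/Rellich up front to obtain strong $L^r$ and a.e.\ convergence before passing to the limit. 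That difference is organizational rather than substantive.

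There is, however, a concrete gap in your limit passage, at precisely the step you flag as delicate. The mass-action term $\beta(1-\alpha)SI_M$ with $I_M=(1-\alpha)I+I^*$ expands to $\beta(1-\alpha)^2SI+\beta(1-\alpha)SI^*$, so the $S$ and $I$ equations contain a term that is \emph{quadratic} in the control $\alpha$. The weak-$\ast$/strong product rule you invoke handles the terms in which the control enters linearly (such as $(\omu-\mu_n)S_nN_n^*$, $\nu_nS_n^*$, and $(1-\alpha_n)S_nI_n^*$), but it does not give $(1-\alpha_n)^2S_nI_n\rightharpoonup(1-\alpha)^2SI$: weak-$\ast$ convergence $\alpha_n\stackrel{\ast}{\rightharpoonup}\alpha$ in $L^\infty(Q_T)$ does not imply $\alpha_n^2\stackrel{\ast}{\rightharpoonup}\alpha^2$ (an admissible sequence oscillating between $\underline\alpha$ and $1$ on fine stripes has $\alpha_n\rightharpoonup(\underline\alpha+1)/2$ while $\alpha_n^2\rightharpoonup(\underline\alpha^2+1)/2$, which is strictly larger than the square of the weak limit). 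As written, your claim that every nonlinearity of \eqref{PDE_SYS} fits the weak-$\ast$/strong framework is therefore false for this term, and the identification of the limit equation is incomplete. To be fair, the paper's own proof buries the same step inside the unproved assertion that the weak limit ``is a weak solution of \eqref{VEC_SYS} in the semigroup sense,'' so it faces the identical difficulty; but since you make the limit passage explicit, you need either to supply an argument for the quadratic-in-$\alpha$ term (which weak convergence of the controls alone will not provide) or to acknowledge that this term falls outside the framework you describe.
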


The above theorem is bit more than what we need to conclude existence of optimal control. The proposition below is therefore a corollary of Theorem \ref{wtos} along with steps (i)--(iv) discussed above.

\begin{proposition}[\bf Existence of optimal control] Problem \eqref{ct1} admits a solution.
\end{proposition}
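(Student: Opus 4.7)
The plan is to run the direct method of the calculus of variations exactly as outlined in items (i)--(iv) above, with Theorem \ref{wtos} and the weak lower semicontinuity lemma as the key analytical inputs.

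First, $U_{ad}^p$ is nonempty (any constant triple $(A_1,A_2,A_3)$ lies in it) and $J \geqslant 0$, so $d := \inf\{J(u) : u \in U_{ad}^p\}$ is a finite nonnegative real number, and we may extract a minimizing sequence $(u_n) \subset U_{ad}^p$ with $J(u_n) \to d$. Second, $U_{ad}^p$ is bounded, closed, and convex in the reflexive Banach space $[L^p(Q_T)]^3$ (for $1 < p < \infty$), hence weakly sequentially compact; along a subsequence (not relabelled), $u_n \rightharpoonup u^\circ$ in $[L^p(Q_T)]^3$ for some $u^\circ \in U_{ad}^p$. (The edge case $p=1$ can be handled by replacing weak compactness with weak-$\ast$ sequential compactness in $[L^\infty(Q_T)]^3$, which applies thanks to the uniform $L^\infty$ bound built into the definition of $U_{ad}^p$.)

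Third, by Theorem \ref{wtos}, $\cM(u_n) \to \cM(u^\circ)$ strongly in $Z^6$, and hence the pair $(\cM(u_n), u_n) \rightharpoonup (\cM(u^\circ), u^\circ)$ in $Z^6 \times [L^p(Q_T)]^3$. Applying the weak lower semicontinuity lemma to $\mathcal{J}$ then yields
\[
J(u^\circ) \,=\, \mathcal{J}(\cM(u^\circ), u^\circ) \,\leqslant\, \liminf_{n \to \infty} \mathcal{J}(\cM(u_n), u_n) \,=\, \liminf_{n \to \infty} J(u_n) \,=\, d,
\]
and since $u^\circ \in U_{ad}^p$ automatically gives $d \leqslant J(u^\circ)$, we conclude $J(u^\circ) = d$, so $u^\circ$ is an optimal control.

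The substantive work has already been done in Theorem \ref{wtos}: upgrading weak convergence of controls to strong convergence of states is exactly what allows one to evaluate the nonlinear state-dependent part of $\mathcal{J}$ at the limit $u^\circ$. With that theorem granted, no real obstacle remains for the present proposition; it is simply the assembly of these ingredients, together with an easy compatibility check that the weak topology used for compactness of $U_{ad}^p$ is the same one under which the control-cost terms $\|q_i(u)\|_{L^2(Q_T)}^2$ (which are convex and continuous, hence weakly lower semicontinuous on $[L^p(Q_T)]^3$ for $p \geqslant 2$, and covered by the $L^\infty$ bound otherwise) behave well.
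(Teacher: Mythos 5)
Your argument is correct and is exactly the route the paper takes: the paper explicitly presents this proposition as a corollary of Theorem \ref{wtos} combined with steps (i)--(iv) of the direct method and the weak lower semicontinuity lemma for $\mathcal{J}$. Your write-up simply fills in those steps (minimizing sequence, weak sequential compactness of $U_{ad}^p$, weak-to-strong continuity of $\cM$, lower semicontinuity at the limit), so there is no substantive difference.
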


The remainder of this subsection is dedicated to the proof of Theorem \eqref{wtos}. We use the notation of \eqref{VEC_SYS}. In the next proposition, we put \begin{equation}
    \label{def_Y} Y \equiv W^{1,p}(0,T;L^p(\Omega)) \cap L^p(0,T;W^{2,p}(\Omega)) \cap C([0,T];W^{1,p}(\Omega)).
\end{equation}

\begin{proposition}
    For $p \geqslant 2$, $\cM: U_{ad}^{\CB{p}} \to \CB{Y^6}$ is Lipschitz continuous.
\end{proposition}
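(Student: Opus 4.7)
The plan is to subtract the state equations for two controls in $U_{ad}^p$, exploit the uniform $L^\infty$ bounds on both states and controls to make the nonlinearity $F$ behave Lipschitz, and then apply maximal parabolic $L^p$ regularity together with a Gronwall step. The polynomial structure of $F$ and the uniform a priori bounds from Section \ref{sec:globalExistence} are what make this work.

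First I would fix $u, \tilde u \in U_{ad}^p$ and set $y = \cM(u)$, $\tilde y = \cM(\tilde u)$, and $z = y - \tilde y$. Because every $u \in U_{ad}^p$ is bounded componentwise by the constants $B_i$, tracking constants through the proofs of Theorems \ref{globalLpBound} and \ref{thm:globalExistence} shows that the resulting $L^\infty(Q_T)$-bound on the state depends only on the initial data, the ambient parameters, and $\max_i B_i$. In particular there is a single constant $M > 0$ with $\|y\|_{L^\infty(Q_T)}, \|\tilde y\|_{L^\infty(Q_T)} \leq M$ uniformly in $u, \tilde u \in U_{ad}^p$.

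The difference $z$ then satisfies
\begin{equation*}
    (\partial_t - D\Delta) z = F(y,u) - F(\tilde y, \tilde u), \qquad z(\cdot,0) = 0, \qquad \nabla z_i \cdot \B n = 0 \ \text{on } \Gamma,
\end{equation*}
and because $F$ is polynomial of degree two in $(y,u)$ while all arguments are uniformly bounded in $L^\infty(Q_T)$, one obtains the pointwise estimate $|F(y,u) - F(\tilde y, \tilde u)| \leq C_M(|z| + |u - \tilde u|)$, with $C_M$ depending only on $M$ and the ambient parameters. Applying Lemma \ref{pnormbound} componentwise (with a Jensen step as in Corollary \ref{corToLem}), summing, and invoking Gronwall's inequality then yields
\begin{equation*}
    \|z\|_{L^\infty(0,T;L^p(\Omega))} + \|z\|_{L^p(Q_T)} \leq C \|u - \tilde u\|_{L^p(Q_T)}.
\end{equation*}

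Feeding this back into the pointwise Lipschitz bound controls the right-hand side of the $z$-equation in $L^p(Q_T)$ by $C\|u - \tilde u\|_{L^p(Q_T)}$. Applying the maximal parabolic $L^p$ regularity estimate from \cite{lady,WuYinWang} (already used in the proof of Theorem \ref{thm:globalExistence}) to each scalar equation $(\partial_t - d_i \Delta)z_i = G_i$, together with the standard trace/interpolation result recovering the $C([0,T];W^{1,p}(\Omega))$ component of the $Y$-norm, delivers $\|z\|_{Y^6} \leq C\|u - \tilde u\|_{L^p(Q_T)}$, which is the claimed Lipschitz bound. The only real obstacle is bookkeeping: keeping $C$ independent of the particular $u, \tilde u$ chosen in $U_{ad}^p$, which the uniform $L^\infty$ bound $M$ on the states guarantees, and tracking the hypothesis $p \geq 2$, which enters through the embedding $L^p(Q_T) \hookrightarrow L^2(Q_T)$ needed to compare this Lipschitz estimate with the $L^2$-norms of the controls appearing in the cost functional \eqref{e.w10151_gen}.
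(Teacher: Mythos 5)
Your argument is essentially the paper's: both form the difference equation for $z = \cM(u) - \cM(\tilde u)$, bound $F(\cM(u),u)-F(\cM(\tilde u),\tilde u)$ by $C(|z| + |u-\tilde u|)$ using the uniform $L^\infty$ bounds on states and controls (the paper writes this as an exact decomposition $M_c(u-\tilde u) + M_e z$ with $L^\infty$ coefficient matrices), and close with maximal parabolic $L^p$ regularity plus the embedding into $C([0,T];W^{1,p}(\Omega))$. The only cosmetic difference is that you absorb the zeroth-order term in $z$ via an explicit Gronwall bootstrap before invoking maximal regularity, whereas the paper lets the cited linear parabolic estimate handle that lower-order term directly; both are fine, though note that $p\geqslant 2$ is really needed for the embedding $W^{1,p}(0,T;L^p)\cap L^p(0,T;W^{2,p})\hookrightarrow H^1(0,T;L^p)\cap L^2(0,T;W^{2,p})\hookrightarrow C([0,T];W^{1,p})$ rather than for comparison with the cost functional.
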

\begin{proof}
    Let $u = (\alpha_u, \mu_u, \nu_u)^\top, v=(\alpha_v, \mu_v, \nu_v)^\top \in U_{ad}^{\CB{p}}$ and $z = \cM(u) - \cM(v).$ Then $z$ solves 
    \begin{equation}\label{M_char0}
        z_t - D\Delta z = F(\cM(u),u) - F(\cM(v),v)
    \end{equation} with zero initial condition and $\nabla z \cdot \mathbf{n} = 0.$ 
    
By direct computation, one sees that there are matrices $M_c = M_c(\cM(u),\cM(v),u,v)$ and $M_e = M_e(\cM(u),\cM(v),u,v)$ of order $6 \times 3$ and $6 \times 6$ respectively such that \begin{equation}\label{M_char}
            F(\cM(u),u) - F(\cM(v),v) = M_c(u-v) + M_ez.
        \end{equation} Moreover, all entries of both $M_c$ and $M_e$ belong to $L^\infty(Q_T)$ and by defining $$\|M\|_{\infty} = \max_i \sum_j \|m_{ij}\|_{L^\infty(Q_T)}$$ there exist constants $C_c, C_e > 0$ such that $\|M_c\|_{\infty} \leqslant C_c$ and $\|M_e\|_{\infty} \leqslant C_e.$ The proof of identity \eqref{M_char} is tedious but straightforward if one computes it by hand and can be considerably simplified if one uses the mean value theorem. We do not include it here.

    It follows by \eqref{M_char0} and \eqref{M_char} that $z$ solves 
    \begin{equation}\label{z_eq}
        z_t - D\Delta z = M_c(u-v) + M_ez
    \end{equation} with zero initial condition and $\nabla z \cdot \mathbf{n} = 0.$ The classic parabolic estimate \cite{lady}[Theorem 9.1, p. 341] gives \begin{equation}
        \|z\|_{\CB{[L^p(0,T;W^{2,p}(\Omega))]^6}}^p + \|z_t\|_{\CB{[L^p(Q_T)]^6}}^p \leqslant C\|u-v\|_{U^{\CB{p}}}^p.
    \end{equation} The result then follows from the continuous embedding $$H^1(0,T;\CB{L^p(\Omega)}) \cap L^2(0,T;\CB{W^{2,p}(\Omega))} \hookrightarrow C([0,T];\CB{W^{1,p}(\Omega))}.$$ \end{proof}
    
A corollary of the previous proposition is the \emph{strong--to--strong} continuity of the map $\cM: U_{ad}^{\CB{p}} \to Y^{\CB{6}}$. 

\begin{lem}
	For $p \geqslant \max\{2,\max\{p_s^i\}\}$, the map $\cM: U_{ad}^{\CB{p}} \to Z^{\CB{6}} \ (Z = L^{\max\{p_s^i\}}(Q_T))$ is weakly closed. That is, given a sequence $(u_n) \in U_{ad}^{\CB{p}}$  one has
\begin{equation}\label{weaklyclosed}
	u_n \rightharpoonup u \ \mbox{\rm in} \ U^{\CB{p}} \ \mbox{\rm and} \ \cM(u_n) \rightharpoonup v \ \mbox{\rm in} \ Z^{\CB{6}} \Longrightarrow u \in U_{ad}^{\CB{p}} \ \mbox{\rm and} \ \cM(u) = v.
\end{equation}
\end{lem}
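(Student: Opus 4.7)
The plan is to treat the two conclusions separately: first $u \in U_{ad}^p$ by convexity, and second $\cM(u) = v$ by extracting a strongly convergent subsequence of $y_n := \cM(u_n)$ and passing to the limit in the quasilinear system.

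For the first claim, observe that $U_{ad}^p$ is convex and strongly closed in $[L^p(Q_T)]^3$, since it is cut out by the pointwise bounds $A_i \leqslant u_i \leqslant B_i$. Mazur's lemma then guarantees $U_{ad}^p$ is weakly closed, so $u_n \rightharpoonup u$ with $u_n \in U_{ad}^p$ immediately forces $u \in U_{ad}^p$.

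For the second claim, I would first use boundedness of $U_{ad}^p$ together with the Lipschitz estimate of the previous proposition to obtain a uniform bound on $y_n$ in $Y^6$, where $Y$ is as in \eqref{def_Y}. In particular $y_n$ is bounded in $L^p(0,T;W^{2,p}(\Omega))^6$ and $\partial_t y_n$ is bounded in $L^p(Q_T)^6$. Since $W^{2,p}(\Omega)$ is compactly embedded in $L^p(\Omega)$ by Rellich--Kondrachov, the Aubin--Lions lemma furnishes a (non-relabeled) subsequence with $y_n \to \tilde y$ strongly in $L^p(Q_T)^6$; because $p \geqslant \max_i p_s^i$ and $Q_T$ has finite measure, this same subsequence converges strongly in $Z^6$. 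I would then pass to the limit in the weak formulation of \eqref{VEC_SYS}: the linear-in-$y$ terms transfer by weak convergence, and on the right-hand side the nonlinearity $F$ is at most quadratic in $y$ and affine in $u$. The purely quadratic terms $y_n^i y_n^j$ converge strongly in $L^{p/2}(Q_T)$ (hence in $L^1$ against a test function $\phi \in L^\infty$), while the mixed terms $u_n^i y_n^j$ and $u_n^i y_n^j y_n^k$ converge in $L^1$ via the standard strong-times-weak argument, combining strong $L^p$ convergence of the $y$-factors with weak $L^p$ convergence of $u_n$ (and invoking the uniform $L^\infty$ bound on $y_n$ supplied by Theorem \ref{thm:globalExistence} where needed). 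This identifies $\tilde y$ as a weak, and by parabolic regularity classical, solution of \eqref{VEC_SYS} with control $u$, so $\tilde y = \cM(u)$ by uniqueness. Since the full sequence satisfies $y_n \rightharpoonup v$ in $Z^6$ while the extracted subsequence converges strongly (hence weakly) to $\cM(u)$, uniqueness of weak limits gives $v = \cM(u)$.

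The principal obstacle is the passage to the limit in the mixed nonlinear terms $u_n \cdot y_n$, where one factor is only weakly convergent and the product is a priori a weak-times-weak limit. The essential ingredient that makes this go through is Aubin--Lions compactness, which upgrades the $Y^6$-boundedness of $y_n$ into strong $L^p(Q_T)^6$-convergence of a subsequence and thereby legitimizes the strong-times-weak product argument.
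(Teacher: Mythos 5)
Your overall architecture matches the paper's: $u\in U_{ad}^p$ because a convex, strongly closed subset of $[L^p(Q_T)]^3$ is weakly closed; the Lipschitz estimate gives a uniform bound on $y_n=\cM(u_n)$ in $Y^6$ with $Y$ as in \eqref{def_Y}; compactness (you via Aubin--Lions, the paper via reflexivity of the intermediate space $\mathcal{Y}$ and the compact embedding $\mathcal{Y}\hookrightarrow Z$) turns this into convergence of a subsequence of states; one then identifies the limit as a solution of \eqref{VEC_SYS} with control $u$ and concludes by uniqueness of solutions and of weak limits. The paper compresses the identification step into the single sentence ``one can also show that $v$ is a weak solution,'' so your attempt to actually carry it out is the right instinct.

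There is, however, a genuine gap exactly at that step. You assert that $F$ is affine in $u$, but it is not: since $I_M=(1-\alpha)I+I^*$, the transmission terms in the $S$ and $I$ equations of \eqref{PDE_SYS} contain $\beta(1-\alpha)^2SI$, which is \emph{quadratic} in the control $\alpha$. For such a term the strong-times-weak argument breaks down. Writing
\begin{equation}
\int_{Q_T}(1-\alpha_n)^2S_nI_n\,\phi \;=\; \int_{Q_T}(1-\alpha_n)^2\bigl(S_nI_n-SI\bigr)\phi \;+\; \int_{Q_T}(1-\alpha_n)^2SI\,\phi,
\end{equation}
the first integral vanishes by the uniform pointwise bound on $\alpha_n$ and the strong convergence $S_nI_n\to SI$ in $L^1(Q_T)$, but the second converges (along a subsequence) to $\int_{Q_T}g\,SI\,\phi$ where $g$ is a weak-$*$ limit of $(1-\alpha_n)^2$ in $L^\infty(Q_T)$, and in general $g\neq(1-\alpha)^2$ when $\alpha_n$ converges only weakly (oscillating $\alpha_n$ give strict inequality $g>(1-\alpha)^2$ on a set of positive measure). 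So your limit $\tilde y$ cannot be identified as $\cM(u)$ by this argument. All terms involving $\mu$ and $\nu$, and the term $\beta(1-\alpha)SI^*$, really are affine in the control and are handled correctly by your strong-times-weak reasoning; the obstruction is isolated to the $(1-\alpha)^2$ terms. Closing it requires either strong convergence of $\alpha_n$ (not available from $U_{ad}^p$), a reformulation treating $(1-\alpha)^2$ as an additional constrained control, or a relaxed-control argument. To be fair, the paper's own proof elides this point entirely, so your write-up has the virtue of making the difficulty visible.
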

\begin{proof}
    Let $(u_n)$ be a sequence in $U_{ad}^{\CB{p}}$ such that $u_n \rightharpoonup u $ in $U^{\CB{p}}$ and $\cM(u_n) \rightharpoonup v$ in $Y^{\CB{6}}.$ We have $u \in U_{ad}^{\CB{p}}$ and, from Lipschitz continuity of $\cM: U_{ad}^{\CB{p}} \to Y^{\CB{p}}$, $(\cM({u_n}))$ is uniformly bounded in $Y^{\CB{6}}$. The embeddings 
	\begin{equation}\label{embrec}
		Y \hookrightarrow \mathcal{Y} \equiv W^{1,p}(0,T;L^2(\Omega)) \cap L^p(0,T;W^{2,p}(\Omega)) \cap L^p(0,T;W^{1,p}(\Omega)) \hookrightarrow Z
	\end{equation}
	imply that $(\cM(u_n))$ is also uniformly bounded in $\mathcal{Y}^{\CB{6}}$. By reflexiveness we can assume that (along a subsequence if necessary)  there exists $w \in \mathcal{Y}^{\CB{6}}$ such that $\cM(u_n) \rightharpoonup w$ in $\mathcal{Y}^{\CB{6}}$. By the second embedding in \eqref{embrec} and by the uniqueness of the weak limit, we have $w = v$ in $Z^{\CB{6}}$. One can also show that $v$ is a weak solution of \eqref{VEC_SYS} in the semigroup sense, which is unique. Then $v = \cM(u).$

\end{proof}
\begin{proposition}
	For $p \geqslant \max\{2,\max\{p_s^i\}\}$, the map $\cM: U_{ad}^{\CB{p}} \to \mathcal{Y}^{\CB{6}}$ is weak--to--weak continuous.
\end{proposition}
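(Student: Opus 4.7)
The plan is to combine three ingredients already in hand: the Lipschitz estimate $\cM: U_{ad}^{p} \to Y^{6}$ established in the previous proposition, the weak closedness property \eqref{weaklyclosed} of $\cM$ into $Z^{6}$, and the continuous embedding chain $Y \hookrightarrow \mathcal{Y} \hookrightarrow Z$ from \eqref{embrec}. The strategy is the standard one for going from weak closedness to weak continuity: extract a weakly convergent subsequence, identify its limit via the previous lemma, and then invoke a subsequence (Urysohn) argument to upgrade to convergence of the full sequence.

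Concretely, I would start with a sequence $u_n \rightharpoonup u$ in $U^{p}$. Since $U_{ad}^{p}$ is closed and convex in $U^{p}$ it is weakly closed, so $u \in U_{ad}^{p}$. The weakly convergent sequence $(u_n)$ is norm-bounded in $U^{p}$, so the Lipschitz bound applied to each pair $(u_n, u_1)$ gives a uniform bound $\|\cM(u_n)\|_{Y^{6}} \le C$. The first embedding in \eqref{embrec}, which requires $p \geqslant 2$ so that $L^{p}(\Omega) \hookrightarrow L^{2}(\Omega)$ on the bounded domain $\Omega$, then yields a uniform bound of $(\cM(u_n))$ in $\mathcal{Y}^{6}$.

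Next, I would exploit reflexivity of $\mathcal{Y}^{6}$ (it is a closed subspace of a product of reflexive Bochner $L^{p}$-type spaces since $p \geqslant 2$) to extract, along an arbitrary subsequence of $(\cM(u_n))$, a further subsequence converging weakly in $\mathcal{Y}^{6}$ to some $w$. The continuous embedding $\mathcal{Y} \hookrightarrow Z$ transfers this weak convergence to $Z^{6}$, so the weak closedness statement \eqref{weaklyclosed} forces $w = \cM(u)$. Since every subsequence of $(\cM(u_n))$ therefore admits a further subsequence converging weakly in $\mathcal{Y}^{6}$ to the common limit $\cM(u)$, the Urysohn subsequence principle in the reflexive space $\mathcal{Y}^{6}$ implies that the entire sequence satisfies $\cM(u_n) \rightharpoonup \cM(u)$ in $\mathcal{Y}^{6}$.

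The main obstacle, if there is one, is the orchestration of the three different topologies involved: the Lipschitz bound lives in $Y^{6}$, the conclusion must be phrased in $\mathcal{Y}^{6}$, and the lemma providing identification of the weak limit is formulated in $Z^{6}$. The embeddings in \eqref{embrec} are what reconcile them, so the only substantive check is that these embeddings are indeed continuous for $p \geqslant \max\{2,\max\{p_s^{i}\}\}$; once this is verified, the rest of the argument is a routine bootstrap-and-Urysohn computation.
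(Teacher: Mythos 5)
Your proposal is correct and follows essentially the same route as the paper's own proof: uniform boundedness of $(\cM(u_n))$ in $\mathcal{Y}^{6}$ via the Lipschitz estimate and the embedding $Y\hookrightarrow\mathcal{Y}$, extraction of weakly convergent subsequences by reflexivity, identification of the limit as $\cM(u)$ via the weak closedness lemma, and the Urysohn subsequence principle to upgrade to the full sequence. If anything, your write-up is slightly more careful than the paper's in tracking which of the three topologies ($Y^{6}$, $\mathcal{Y}^{6}$, $Z^{6}$) each step lives in.
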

\begin{proof}
	Let $(u_n)$ be a sequence in $U_{ad}^{\CB{p}}$ such that $u_n \rightharpoonup u$ in $U^{\CB{p}}$. Then $u \in U_{ad}^{\CB{p}}$ and we claim that the sequence $(\cM(u_n))$ converges \emph{weakly} to $\cM(u)$ in $Z^{\CB{6}}$.
	
	Indeed, $(\cM(u_n))$ is uniformly bounded in $\mathcal{Y}^{\CB{6}}$. Let $(\cM(u_{n_k}))$ be a subsequence of $(\cM(u_n))$. Then, there exists a further subsequence $(\cM(u_{n_{k_j}}))$ which converges weakly to some $v$ in $\mathcal{Y}^{\CB{6}}$ and $v = \cM(u)$ from weak closedness. Therefore, Uryson's subsequence principle yields $\cM(u_n) \rightharpoonup \cM(u)$ in $Z^{\CB{6}}.$
\end{proof}
\begin{cor}\label{wtoscor}
	For $p \geqslant \max\{2,\max\{p_s^i\}\}$, the map $\cM: U_{ad}^{\CB{p}} \to Z^{\CB{6}}$ is weak--to--strong continuous.
\end{cor}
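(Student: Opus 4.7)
The plan is to combine the Lipschitz estimate from the earlier proposition (which yields uniform $Y^6$-boundedness of $\{\cM(u_n)\}$, hence $\mathcal{Y}^6$-boundedness via the embedding in \eqref{embrec}) with an Aubin--Lions type compact embedding $\mathcal{Y} \hookrightarrow\hookrightarrow Z$, and then identify the strong limit via the weak-to-weak continuity already established.

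First, take any sequence $u_n \rightharpoonup u$ in $U^p$. The previous proposition gives $u \in U_{ad}^p$ and $\cM(u_n) \rightharpoonup \cM(u)$ in $Z^6$. In particular, by the Lipschitz continuity of $\cM: U_{ad}^p \to Y^6$ and weak boundedness of $\{u_n\}$ in $U^p$, the sequence $\{\cM(u_n)\}$ is uniformly bounded in $Y^6$ and therefore in $\mathcal{Y}^6$.

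Second, I would verify the compact embedding $\mathcal{Y} \hookrightarrow\hookrightarrow Z$. Setting $p_s^{\max} = \max_i p_s^i$, this reduces to an Aubin--Lions statement with $X_0 = W^{2,p}(\Omega)$, $X_1 = L^{p_s^{\max}}(\Omega)$, and $X_2 = L^2(\Omega)$ (or $L^1(\Omega)$ if $p_s^{\max} < 2$, using only that $\{\partial_t \cM(u_n)\}$ is then bounded in $L^p(0,T;L^1(\Omega))$). The compactness $X_0 \hookrightarrow\hookrightarrow X_1$ follows from Rellich--Kondrachov on the bounded Lipschitz domain $\Omega$ together with $p > n$ (so $W^{1,p}(\Omega) \hookrightarrow C(\overline\Omega)$), and the continuous embedding $X_1 \hookrightarrow X_2$ is immediate from boundedness of $\Omega$. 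Aubin--Lions then yields
\begin{equation*}
\mathcal{Y} \hookrightarrow\hookrightarrow L^p(0,T;L^{p_s^{\max}}(\Omega)) \hookrightarrow L^{p_s^{\max}}(Q_T) = Z,
\end{equation*}
where the last continuous embedding uses $p \geqslant p_s^{\max}$.

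Third, compactness extracts a subsequence $\cM(u_{n_k}) \to w$ strongly in $Z^6$. The weak convergence $\cM(u_n) \rightharpoonup \cM(u)$ in $Z^6$ forces $w = \cM(u)$ by uniqueness of weak limits. Since this argument applies to \emph{any} subsequence of $\{\cM(u_n)\}$ and always yields the same cluster point $\cM(u)$, Urysohn's subsequence principle promotes the strong convergence to the full sequence: $\cM(u_n) \to \cM(u)$ in $Z^6$. The main technical nuisance is making the Aubin--Lions chain work in the borderline regime where $p_s^{\max}$ is close to $1$ and the time-derivative target space must be weakened accordingly; no fundamentally new estimate is required beyond what the Lipschitz bound and the previous proposition already provide.
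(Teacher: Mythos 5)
Your argument is correct and is essentially the paper's own proof, which simply cites the compactness of the embedding $\mathcal{Y} \hookrightarrow Z$; you have just unpacked that one line into the standard chain (uniform $\mathcal{Y}^6$-bound from Lipschitz continuity, Aubin--Lions compactness, limit identification via the weak-to-weak continuity, and Urysohn's subsequence principle). No substantive difference in approach.
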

\begin{proof}
	This follows from the compactness of the embedding $\mathcal{Y} \hookrightarrow Z$.
\end{proof}

\begin{remark}[\bf the worst case scenario]\label{wcs} An attentive reader may ask two natural questions. First, why are we proving weak-to-strong continuity when it seems that weak continuity is enough $($given item \emph{(iv)} of our discussion preceding Theorem \emph{\ref{wtos}}$)$? Secondly, what happens in the case $p_s^i = 1$ for some (or all) $i$? 

In fact the first question leads to the second and the second leads to Corollary {\ref{wtoscor}}. It is correct that weak continuity is enough for concluding Theorem \emph{\ref{wtos}}. However, in most cases the lack of higher regularity of solutions along with the fact that $L^1$ spaces are not reflexive causes the map $\cM$ to fail to exhibit weak continuity. That is why, in our case, we need to show that $\cM$ is weak-to-strong continuous with values in a space where solutions have higher integrability. This in turn yields, in one shot, weak closedness and weak-to-strong continuity of $\cM$ as a map from $U_{ad}^{\CB{p}}$ to $[L^1(\Omega)]^6$. 

\end{remark}

\subsection{Optimality conditions}

In this section, we derive first order optimality conditions which will in turn inform the numerics used for simulation in the next section. 

\subsubsection{Derivation of the Lagrangian}

 First, we formulate the Lagrangian function associated to our problem. Given the cost function that we are interested in, we perform this derivation (as well as the characterization of the control) under the following assumption.

\begin{ass}\label{ass_pol}
    The polynomials $p_i$, $q_i$ are such that $p_i(y),q_i(u) \geqslant 0$ for all $y,u,i.$
\end{ass} 

Since we have three constraints (the PDE, the boundary condition, and the initial condition) our Lagrange multiplier is of the form 
$\Phi = (\Phi_s,\Phi_\partial,\Phi_0) = ((\Phi_s^1, \cdots, \Phi_s^6), (\Phi_\partial^1, \cdots, \Phi_\partial^6), (\Phi_0^1,\cdots, \Phi_0^6))$ where, for each $i$, $\Phi_s^i: Q_T \to \mathbb{R}$, $\Phi_\partial^i: \Sigma_T := (0,T) \times \Gamma \to \mathbb{R}$ and $\Phi_0^i: \Omega \to \mathbb{R}.$ We formally define the Lagrangian function as 
\begin{align}
    \mathcal{L} = \mathcal{L}(y,u,\Phi) &= \mathcal{J}(y,u) - \int_{Q_T} \Phi_s \cdot (y_t - D\Delta y - F(y,u))dQ_T \nonumber \\ & - \int_{\Sigma_T} \Phi_\partial \cdot (\nabla y \cdot \mathbf{n})d\Sigma_T - \int_\Omega \Phi_0 \cdot (y(0)-y_0) d\Omega.
\end{align} 

Assuming for a moment that we have no regularity restrictions on $\Phi$ we can integrate the expression above by parts (twice) to get 
\begin{align}
    \mathcal{L}(y,u,\Phi) &= \mathcal{J}(y,u) + \int_{Q_T} ({\Phi_s})_t\cdot y dQ_T -\int_\Omega \{\Phi_s(T)y(T)-\Phi_s(0)y(0)\}d\Omega \nonumber \\&+ \int_{Q_T}\Phi_s \cdot F(y,u)dQ_T + \int_{Q_T} D\Delta \Phi_s y dQ_T \nonumber \\ &+ \int_{\Sigma_T} D[(\nabla y \cdot \mathbf{n})\Phi_s - (\nabla \Phi_s \cdot \mathbf{n})y]d\Sigma_T\nonumber  - \int_{\Sigma_T} \Phi_\partial \cdot (\nabla y \cdot \mathbf{n})d\Sigma_T - \int_\Omega \Phi_0 \cdot (y(0)-y_0) d\Omega.
\end{align} 
From the \CB{formal Lagrange principle (see \cite[Chap. 2]{Troltzsch})}, we expect an optimal control pair $({y^\circ}, {u^\circ})$ to satisfy the optimality conditions of the problem.
\begin{equation}
    \label{lag_pro} \min\limits_{u \in U_{ad}^{\CB{p}}} \mathcal{L}(y,u,\Phi)
\end{equation} with $y$ unconstrained. 
Under Assumption \ref{ass_pol}, and using a test function $h$ (we will specify the test space later), we have 
 \begin{align} 
    D_y\mathcal{L}({y^\circ},{u^\circ},\Phi)h &= \sum\limits_{i = 1}^{m_s} \lambda_i \int_{Q_T}p_i({y^\circ})^{p_s^i-1}\vec{a_i}^\top \cdot h dQ_T \\ &- \int_{Q_T} ({\Phi_s})_t\cdot h dQ_T + \int_\Omega \{\Phi_s(T)h(T)-\Phi_s(0)h(0)\}d\Omega \\ &+ \int_{Q_T}\Phi_s \cdot D_yF({y^\circ},{u^\circ})hdQ_T + \int_{Q_T} D\Delta \Phi_s \cdot h dQ_T \\ &+ \int_{\Sigma_T} D[(\nabla h \cdot \mathbf{n})\Phi_s - (\nabla \Phi_s \cdot \mathbf{n})h]d\Sigma_T  - \int_{\Sigma_T} \Phi_\partial \cdot (\nabla h \cdot \mathbf{n})d\Sigma_T - \int_\Omega \Phi_0 \cdot h(0) d\Omega. \label{lagrr}
\end{align} 

\begin{remark}
    Notice that $D_yF$ appeared in the expression above. It is important to mention here that the nonlinearity $F(y,u)$ can be seen as a Nemytskii operator associated to a map $$F: \mathbb{R}_+ \times \mathbb{R}^n \times \mathbb{R}^6 \times \mathbb{R}^3 \to \mathbb{R}^6.$$ Here, the first two components are the independent variables $(t,x)$, and the last two are the dependent variables $(y,u)$ seen as vectors in their respective Euclidean spaces only and not as vector-functions. It is not surprising then that differentiability properties of the map $u \mapsto \cM(u)$ are closely related to differentiability properties of $F$.

It is not difficult to see that as a map from $Q_T \times \mathbb{R}^6 \times \mathbb{R}^3 \to \mathbb{R}^6$, $F$ is smooth in both $y$ and $u$ (in fact it is a polynomial map). Hence, since $u \to \cM(u)$ is well defined and solutions are $L^\infty$ we can also take derivatives in the $L^\infty$ topology, under some basic properties which is satisfied by our $F$ (see \cite[Chap. 4]{Troltzsch}). Hence the matrix $D_yF(y^\circ, u^\circ)$ above is well defined as has all its entries in $L^\infty(Q_T).$
\end{remark}

 We now want to use the equation $D_y\mathcal{L}({y^\circ},{u^\circ},\Phi)h = 0$ to characterize the multiplier $\Phi.$ We start by taking $h \in C_0^\infty((0,T);[C_0^\infty(\Omega)]^6)$ in \eqref{lagrr}. This means that all the boundary terms (both Dirichlet and Neumann) and the initial/terminal terms disappear. Hence,
\begin{align}
    0 &= \sum\limits_{i = 1}^{m_s} \lambda_i \int_{Q_T}p_i({y^\circ})^{p_s^i-1}\vec{a_i}^\top \cdot h dQ_T + \int_{Q_T} (\Phi_s)_t\cdot h dQ_T \nonumber \\&+ \int_{Q_T}\Phi_s \cdot D_yF({y^\circ},{u^\circ})hdQ_T + \int_{Q_T} D\Delta \Phi_s \cdot h dQ_T.
\end{align}

This implies that  
\begin{equation}\label{lag_adj}
    \int_{Q_T}\left[(\Phi_s)_t  + D\Delta \Phi_s - [D_yF({y^\circ},{u^\circ})]^\top \Phi_s - \sum\limits_{i=1}^{m_s}\lambda_ip_i(y)^{p_s^i-1}\vec{a_i}^\top\right]h dQ_T = 0  
\end{equation} 
for all $h \in [C_0^\infty(Q_T)]^6,$ but since $C_0^\infty(\Omega)$ is dense in $L^p(\Omega)$ for all $p \geqslant 1$, we have 
\begin{align}\label{ad_nbni}
    -(\Phi_s)_t - D\Delta \Phi_s = [D_yF({y^\circ},{u^\circ})]^\top \Phi_s + \sum\limits_{i=1}^{m_s}\lambda_ip_i(y)^{p_s^i-1}\vec{a_i}^\top = 0
\end{align} 
in $C_0^\infty((0,T);\CB{[L^p(\Omega)]^6})$ for any desirable $p.$ Now we see that a good part of the expression of $D_y\mathcal{L}({y^\circ},{u^\circ},\Phi)h$ vanishes and on the equation $D_y\mathcal{L}({y^\circ},{u^\circ},\Phi)h = 0$ we are left with
\begin{align}
    0 &= \int_{\Sigma_T} D[(\nabla h \cdot \mathbf{n})\Phi_s - (\nabla \Phi_s \cdot \mathbf{n})h]d\Sigma_T\nonumber  - \int_{\Sigma_T} \Phi_\partial \cdot (\nabla h \cdot \mathbf{n})d\Sigma_T. 
\end{align} 
Now, since the map $h \to (h_\Gamma, (\nabla h \cdot \mathbf{n})\rvert_\Gamma)$ is surjective from the $\CB{[W^{2,p}(\Omega)]^6} \to \CB{[W^{3/2,p}(\Gamma)]^6} \times \CB{[W^{1/2,p}(\Gamma)]^6}$, if we fix $h_\Gamma = 0$ we would have $$\int_{\Sigma_T} (D\Phi_s - \Phi_\partial)\nabla h \cdot \mathbf{n} d\Sigma_T = 0, \qquad h \in \CB{[W^{2,p}(\Omega)]^6}$$ which is only possible if $D\Phi_s = \Phi_\partial$ on $\Sigma_T.$ Now, flipping the argument we get $$\int_{\Sigma_T} (D\nabla \Phi_s \cdot \mathbf{n})h d\Sigma_T = 0, \qquad h \in \CB{[W^{2,p}(\Omega)]^6}$$ whereby $\nabla \Phi_s \cdot \mathbf{n} = 0$ on $\Sigma_T.$ This supply \eqref{ad_nbni} with boundary condition, i.e., we now have 
\begin{align}\label{ad_ni}\begin{split}
    &-{\Phi_s}_t - D\Delta \Phi_s = [D_yF({y^\circ},{u^\circ})]^\top \Phi_s + \sum\limits_{i=1}^{m_s}\lambda_ip_i(y)^{p_s^i-1}\vec{a_i}^\top, \\
    &\nabla \Phi_s \cdot \mathbf{n} = 0\end{split}
\end{align} 
in $C_0^\infty((0,T);\CB{[L^p(\Omega)]^6})$ and now we only need initial (or in this case, terminal) condition. For this we use the fact that $C_0^\infty((0,T);\CB{[L^p(\Omega)]^6})$ is dense in $H^1(0,T;\CB{[L^p(\Omega)]^6})$, which means that on the equation $D_y\mathcal{L}({y^\circ},{u^\circ},\Phi)h = 0$ we are left with 
\begin{align}
    0 &= -\int_\Omega \{\Phi_s(T)h(T)-\Phi_s(0)h(0)\}d\Omega \nonumber - \int_\Omega \Phi_0 \cdot h(0) d\Omega
\end{align} 
Now again because the operator $h \mapsto (h(T),h(0))$ is surjective from $H^1(0,T;\CB{[L^p(\Omega)]^6})$ to $\CB{[L^p(\Omega)]^6} \times \CB{[L^p(\Omega)]^6}$ we first assume $h(T) = 0$ to get $$\int_\Omega (\Phi_s(0)-\Phi_0)h(0)d\Omega = 0$$ for all $h(0) \in \CB{[L^p(\Omega)]^6}$ which implies $\Phi_s(0) = \Phi_0$ a.e. in $\Omega.$ We are then left with $$0 = \int_\Omega \Phi_s(T)h(T)d\Omega = 0$$ for all $h(T) \in \CB{[L^p(\Omega)]^6}$ which implies $\Phi_s(T) = 0.$ This completes the formal computations we needed. We make it rigorous by introducing topology. We use here the minimal topology required to justify the computations. Notice that the only needed Lagrange multiplier is $\Phi_s$ because $\Phi_\partial = \Phi_s\rvert_{\Sigma_t}$ and $\Phi_0 = \Phi_s(0).$ 
\begin{definition}
    The Lagrangian function $\mathcal{L}: [W^{1,p}(Q_T)]^6 \times U_{ad}^{\CB{p}} \times \bigoplus\limits_{i=1}^{m_s}[W^{1,{p_s^i}'}(Q_T)]^6 \to \mathbb{R}$ for our control problem is defined as \begin{align}
    \mathcal{L}(y,u,\Phi) &= \mathcal{J}(y,u)  - \int_{Q_T} \Phi\cdot y_t dQ_T - \int_{Q_T} \sqrt{D}\nabla \Phi \cdot \sqrt{D} y dQ_T + \int_{Q_T}\Phi \cdot F(y,u)dQ_T,
\end{align} where \CB{the adjoint state} $\Phi = \Phi(y,u) \in \bigoplus\limits_{i=1}^{m_s}\CB{[W^{1,{p_s^i}'}(Q_T)]^6}$ is the solution to the \CB{adjoint} system
\begin{align}\label{ad_bi}\begin{split}
    &-{\Phi}_t - D\Delta \Phi = [D_yF(y,u)]^\top \Phi + \sum\limits_{i=1}^{m_s}\lambda_ip_i(y)^{p_s^i-1}\vec{a_i}^\top, \\
    &\nabla \Phi \cdot \mathbf{n} = 0,\\
    &\Phi(\cdot,T) = 0.\end{split}
\end{align} 
\end{definition} 



We end the section with the abstract optimality system provided by the Lagrangian. \begin{theorem}[\bf First order optimality system] Let ${u^\circ}$ be an optimal solution to \eqref{e.w09181_cpyu} and ${y^\circ} = \cM({u^\circ}),$ and $\Phi^\circ = \Phi({y^\circ},{u^\circ}).$  Then the following optimality system holds: 
\begin{equation}\label{optim_sys_lag}
    \begin{split} 
    &{u^\circ} \in U_{ad}^{\CB{p}}, \\
    &\mathcal{L}_{\Phi}({y^\circ},{u^\circ},\Phi^\circ) = 0, \\
    &\mathcal{L}_{y}({y^\circ},{u^\circ},\Phi^\circ) = 0, \\
    &\langle \mathcal{L}_u({y^\circ},{u^\circ},\Phi^\circ),u-{u^\circ}\rangle \geqslant 0, \qquad \forall u \in U\CB{_{ad}^p}.
    \end{split}
\end{equation}
\end{theorem}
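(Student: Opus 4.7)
The plan is to verify the four conditions in \eqref{optim_sys_lag} one at a time. Inclusion ${u^\circ}\in U_{ad}^{p}$ is immediate from the definition of optimal control for \eqref{ct1}. For $\mathcal L_\Phi({y^\circ},{u^\circ},\Phi^\circ)=0$, I observe that $\mathcal L$ is affine in $\Phi$, so its Gâteaux derivative in any admissible direction $\psi$ reproduces the weak form of \eqref{VEC_SYS} tested against $\psi$, and this vanishes because ${y^\circ}=\cM({u^\circ})$ solves \eqref{VEC_SYS} by construction. For $\mathcal L_y({y^\circ},{u^\circ},\Phi^\circ)=0$, the formal derivation carried out just before the statement showed that requiring $D_y\mathcal L({y^\circ},{u^\circ},\Phi)h=0$ for all $h$ in a sufficiently rich test space is equivalent to $\Phi$ solving the adjoint system \eqref{ad_bi} (one successively tests against $C_0^\infty$ interior functions, then functions with prescribed Dirichlet/Neumann traces, then $H^1(0,T;[L^p(\Omega)]^6)$ functions with prescribed terminal/initial values). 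Since $\Phi^\circ$ is \emph{defined} to be that adjoint solution, the identity holds by construction, and the three conditions above require no new analysis.

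The fourth condition is the variational inequality, and the plan here is a standard convex-perturbation plus adjoint argument. For any $u\in U_{ad}^{p}$ and $\tau\in(0,1]$, convexity of $U_{ad}^{p}$ gives $u^\tau:={u^\circ}+\tau(u-{u^\circ})\in U_{ad}^{p}$, so by optimality $J(u^\tau)\geqslant J({u^\circ})$, whence
\begin{equation*}
\liminf_{\tau\to 0^+}\frac{J(u^\tau)-J({u^\circ})}{\tau}\geqslant 0.
\end{equation*}
To identify this directional derivative with $\langle \mathcal L_u({y^\circ},{u^\circ},\Phi^\circ),u-{u^\circ}\rangle$, I would upgrade the Lipschitz continuity of $\cM:U_{ad}^{p}\to Y^{6}$ to Gâteaux differentiability at ${u^\circ}$, with derivative $\delta y:=\cM'({u^\circ})(u-{u^\circ})$ characterized as the unique $Y^{6}$-solution of the linearization
\begin{equation*}
(\delta y)_t - D\Delta\delta y = D_yF({y^\circ},{u^\circ})\delta y + D_uF({y^\circ},{u^\circ})(u-{u^\circ}),\quad \nabla\delta y\cdot\mathbf n=0,\quad \delta y(\cdot,0)=0.
\end{equation*}
Linear solvability and a parabolic estimate for this system follow from the same theorem of Ladyzhenskaya--Solonnikov--Ural'tseva used in the Lipschitz proof, since both Jacobians $D_yF$ and $D_uF$ lie in $L^\infty(Q_T)$ by the smoothness of the polynomial Nemytskii map $F$. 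The chain rule then yields $J'({u^\circ})(u-{u^\circ})=\mathcal J_y({y^\circ},{u^\circ})\delta y+\mathcal J_u({y^\circ},{u^\circ})(u-{u^\circ})$.

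The bridge to the Lagrangian is an adjoint duality calculation: multiply the linearization by $\Phi^\circ$, integrate over $Q_T$, and transfer derivatives via integration by parts. The zero-flux Neumann data on both $\delta y$ and $\Phi^\circ$ annihilate the spatial boundary integrals, and the matched data $\delta y(\cdot,0)=0$ and $\Phi^\circ(\cdot,T)=0$ annihilate the temporal boundary integrals; substituting \eqref{ad_bi} for the expression acting on $\Phi^\circ$ produces the identity
\begin{equation*}
\sum_{i=1}^{m_s}\lambda_i\int_{Q_T}p_i({y^\circ})^{p_s^i-1}\vec a_i^\top\delta y\,dQ_T = \int_{Q_T}\Phi^\circ\cdot D_uF({y^\circ},{u^\circ})(u-{u^\circ})\,dQ_T,
\end{equation*}
whose left-hand side is exactly $\mathcal J_y({y^\circ},{u^\circ})\delta y$. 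Adding the $\mathcal J_u$ contribution gives $J'({u^\circ})(u-{u^\circ})=\langle D_u\mathcal L({y^\circ},{u^\circ},\Phi^\circ),u-{u^\circ}\rangle$, and combining with the one-sided inequality above closes the argument.

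The main obstacle is the Gâteaux-differentiability step for $\cM$: producing a remainder bound $\|\cM(u^\tau)-{y^\circ}-\tau\delta y\|_{Y^{6}}=o(\tau)$ as $\tau\to 0^+$ in a topology strong enough to justify both the chain rule and the duality pairing against $\Phi^\circ\in\bigoplus_i[W^{1,{p_s^i}'}(Q_T)]^6$. This is exactly where the exponent restriction $p\geqslant\max\{2,\max_i p_s^i\}$ (as in Corollary \ref{wtoscor}) is needed, to align the integrability of the linearization with the conjugate integrability of the adjoint. Once the quadratic remainder estimate is in hand---which is standard once one knows that $F$ is $C^2$ in $(y,u)$, as guaranteed by its polynomial structure---every remaining step is routine parabolic duality.
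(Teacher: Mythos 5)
Your proposal is correct and follows essentially the same route as the paper: the first three conditions hold by construction (the state equation and the definition of $\Phi^\circ$ as the adjoint solution derived from $D_y\mathcal L=0$), and the variational inequality is obtained from convexity of $U_{ad}^p$, Fréchet differentiability of $\cM$ (the paper's Theorem \ref{dif_teo}), and the adjoint duality identity that the paper records as \eqref{gradJ}. The paper merely distributes these ingredients across the surrounding subsections rather than assembling them into a single displayed proof, so no substantive difference remains.
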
 

In the upcoming sections we will restate the optimality system \eqref{optim_sys_lag} in a more explicit way so it can serve as basis to the simulations of the last section.

\subsubsection{Fréchet differentiability of the control-to-state map}

We now return to system \begin{equation}\label{VEC_SYS2}
	\begin{split} 
		&y_t - D\Delta y = F(y,u), \\
		&y(\cdot,0) = y_0 \\
		&\nabla y \cdot \mathbf{n} = 0
	\end{split}
\end{equation}  from where we have the map $u \mapsto y = \cM(u)$ well-defined in some function spaces. We now show that $\cM$ is Fréchet differentiable (in the appropriate spaces).

\begin{theorem}\label{dif_teo}
    Let $\hat u \in U_{ad}^{\CB{p}} \cap \hat U$ where $\hat U$ is some open neighborhood of $\hat u$ in $U\CB{^p}$ and let $\hat y = \cM(\hat u)$. The map $\cM: \hat U \to \CB{[L^{p/2}(Q_T)]^6}$ is Fréchet differentiable. The directional derivative at $\hat u$ in the direction $h \in U\CB{^p}$ is given by $$\cM'(\hat u) h = y$$ where $y$ solves the linear PDE \begin{equation}\label{VEC_SYS2der}
    \begin{split} 
    &y_t - D\Delta y = F_u(\cM(\hat u), \hat u)h + \CB{F_y(\cM(\hat u), \hat u)y}, \\
    &y(\cdot,0) = 0 , \nabla y \cdot \mathbf{n} = 0
    \end{split}
\end{equation} \end{theorem}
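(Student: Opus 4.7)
The plan is to carry out the classical Taylor-expansion argument for control-to-state maps of semilinear parabolic systems. First, I would observe that the linearization \eqref{VEC_SYS2der} is a well-posed linear parabolic system: since $\hat y = \cM(\hat u) \in [L^\infty(Q_T)]^6$ by Theorem \ref{thm:globalExistence} and $\hat u \in [L^\infty(Q_T)]^3$ by the definition of $U_{ad}^p$, the Jacobian matrices $F_y(\hat y, \hat u)$ and $F_u(\hat y, \hat u)$ have entries in $L^\infty(Q_T)$. Standard linear parabolic theory then gives a unique solution $y \in Y^6$ depending linearly and continuously on $h \in U^p$. This defines a bounded linear operator $L_{\hat u}: U^p \to Y^6$, $L_{\hat u} h = y$, which is the natural candidate for $\cM'(\hat u)$.

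The core of the proof is showing that the remainder $w := \cM(\hat u + h) - \cM(\hat u) - L_{\hat u} h$ satisfies $\|w\|_{[L^{p/2}(Q_T)]^6} = o(\|h\|_{U^p})$. Subtracting the three PDEs that $\tilde y := \cM(\hat u + h)$, $\hat y$, and $y$ satisfy, and using the fact that $F$ is a polynomial in $(y, u)$, an exact Taylor expansion at $(\hat y, \hat u)$ yields
\begin{equation*}
w_t - D \Delta w = F_y(\hat y, \hat u) w + R(\tilde y - \hat y, h),
\end{equation*}
together with zero initial data and zero-flux boundary condition, where the remainder $R$ is a polynomial in $(\tilde y - \hat y, h)$ whose every monomial has total degree at least two in those perturbation variables. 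The Lipschitz continuity of $\cM: U_{ad}^p \to Y^6$ established in the preceding proposition gives $\|\tilde y - \hat y\|_{Y} \le C\|h\|_{U^p}$, and the continuous embedding $Y \hookrightarrow L^p(Q_T)$ transfers this bound to the $L^p$-norm. Hölder's inequality then estimates each quadratic monomial of the form $(\tilde y - \hat y)_i h_j$ or $(\tilde y - \hat y)_i (\tilde y - \hat y)_j$ in $L^{p/2}(Q_T)$ by $C \|h\|_{U^p}^2$; higher-degree monomials carry extra factors of the $L^\infty$-bounded quantities $\hat u$, $\hat u + h$, $\hat y$, $\tilde y$ and are handled the same way, yielding $\|R\|_{[L^{p/2}(Q_T)]^6} \le C \|h\|_{U^p}^2$.

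Finally, applying the parabolic estimate from \cite{lady} to the $w$-equation with $L^{p/2}$ right-hand side and absorbing the linear term $F_y(\hat y, \hat u) w$ via Grönwall's inequality yields $\|w\|_{[L^{p/2}(Q_T)]^6} \le C \|h\|_{U^p}^2 = o(\|h\|_{U^p})$, which establishes Fréchet differentiability and the identification $\cM'(\hat u) h = y$. The main obstacle will be matching the function spaces to the nonlinearity: the emergence of $[L^{p/2}(Q_T)]^6$ as the target space is exactly what is forced by Hölder's inequality on the quadratic part of $R$, and this in turn explains the restriction to an open neighborhood $\hat U$ of $\hat u$ in $U^p$, within which $\hat u + h$ stays in a regime where the global existence result and the Lipschitz bound for $\cM$ continue to apply.
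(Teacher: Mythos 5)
Your proof is correct and follows essentially the same route as the paper: both form the remainder $\hat z = \cM(\hat u+h)-\cM(\hat u)-y$, show it solves a linear parabolic system with zero data whose right-hand side is the Taylor remainder of $F$ at $(\hat y,\hat u)$, control that remainder using the Lipschitz continuity of $\cM$ into $Y^6$ (hence into $L^\infty$ and $L^p$), and conclude with the parabolic estimate of \cite{lady}. The only difference is presentational: you exploit the explicit polynomial form of $F$ to obtain a quantitative $O(\|h\|_{U^p}^2)$ bound on the remainder via H\"older in $L^{p/2}$, whereas the paper invokes abstract Fr\'echet differentiability of the Nemytskii operator $F$ ($L^\infty$ in $y$, $L^p\to L^{p/2}$ in $u$) and works with little-$o$ remainders $r_y,r_u$; both yield the required $o(\|h\|_{U^p})$.
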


\begin{proof} We have to show that \begin{equation}
    \label{def_dif} \cM(\hat u+h) - \cM(\hat u) = Th + r(\hat u, h)
\end{equation} where $T$ is a continuous linear operator from $\hat U$ to $[L^{p/2}(Q_T)]^6$ and $$\dfrac{\|r(\hat u, h)\|_{[L^{p/2}(Q_T)]^6}}{\|h\|_{U^p}} \to 0 $$ as $\|h\|_{U^p} \to 0.$ 

Let $\overline y = \cM(\hat u + h)$ and $\hat y = \cM(\hat u)$. The difference $\hat z = \overline y - \hat y - y$ solves the PDE \begin{equation} \label{hatz_eq}
    \begin{split} 
    &\hat z_t - D\Delta \hat z = F(\overline y,\hat u + h)-F(\hat y, \hat u) - F_u(\hat y, \hat u)h - F_y(\hat y, \hat u)y, \\
    & \hat z(\cdot,0) = 0 , \nabla 
    \hat z\cdot \mathbf{n} = 0.
    \end{split}
\end{equation} 
Since $y \mapsto F(y,u)$ is Frechét differentiable in the $L^\infty$ topology and $u \mapsto F(y,u)$ is Frechét differentiable from $L^{p}$ to $L^{p/2}$ we have
 \begin{align}F(\overline y,\hat u + h)-F(\hat y, \hat u)  &= F(\overline y,\hat u + h)-F(\overline y,\hat u) + F(\overline y,\hat u) - F(\hat y, \hat u) \\ &= F_u(\hat y, \hat u)h + r_u + F_y(\hat y,\hat u)(\overline{y}-\hat y) + r_y
\end{align}
with
\begin{equation}
    \dfrac{\|r_y\|_{\CB{[L^\infty(Q_T)]^6}}}{\|\overline y - \hat y\|_{\CB{[L^\infty(Q_T)]^6}}} \to 0 \ \mbox{as} \ \|\overline y - \hat y\|_{\CB{[L^\infty(Q_T)]^6}} \to 0
\end{equation} and \begin{equation}
    \dfrac{\|r_u\|_{[L^{p/2}(Q_T)]^6}}{\|h\|_{[L^{p}(Q_T)]^3}} \to 0 \ \mbox{as} \ \|h\|_{[L^{p}(Q_T)]^3} \to 0.
\end{equation} 
Hence \eqref{hatz_eq} becomes \begin{equation}\label{VEC_SYS2new22}
    \begin{split} 
    &\hat z_t - D\Delta \hat z + F_y(\hat y, \hat u)\hat z = r_u + r_y, \\
    &\hat z(\cdot,0) = 0 , \nabla \hat z \cdot \mathbf{n} = 0.
    \end{split}
\end{equation} which has a unique solution. Recall now that $u \mapsto \cM(u)$ is a Lipschitz continuous mapping from $U_{ad}^{\CB{p}}$ to \CB{$Y^6$} where $$ Y \equiv W^{1,p}(0,T;\CB{L^p(\Omega)}) \cap L^p(0,T;\CB{W^{2,p}(\Omega)}) \cap C([0,T];\CB{W^{1,p}(\Omega)})$$ and that the embedding $$Y \hookrightarrow C([0,T];C(\overline{\Omega}) \cap W^{1,p}(\Omega))
$$ is continuous. Hence, \begin{align}
    \|\overline y - \hat y\|_{\CB{[C(\overline{\Omega_T})]^6}} + \|\overline y - \hat y\|_{\CB{[C([0,T];W^{1,p}(\Omega)]^6)}} \leqslant C\|h\|_{\CB{U^p}}.
\end{align} \CB{Now notice that \begin{align}
    \dfrac{\|r_y\|_{[L^{p/2}(Q_T)]^6}}{\|h\|_{U^p
    }} &\leqslant \dfrac{\|r_y\|_{[L^\infty(Q_T)]^6}\|\overline{y}-\hat y\|_{[L^\infty(Q_T)]^6}}{\|\overline{y}-\hat y\|_{[L^\infty(Q_T)]^6}\|h\|_{U^p
    }}\leqslant C\dfrac{\|r_y\|_{[L^\infty(Q_T)]^6}}{\|\overline{y}-\hat y\|_{[L^\infty(Q_T)]^6}}
\end{align}
which implies that the right hand side of \eqref{VEC_SYS2new22} is $o(\|h\|_{U^p})$. The standard parabolic inequality \cite[Theorem 9.1, p. 341]{lady} then implies $\|\hat z\|_{[L^{p/2}(\Omega_T)]^6} = o(\|h\|_{U^p})$, whence the result follows from the definition of $\hat z$ and $y.$}\end{proof}

\subsubsection{The gradient of the cost function and explicit stationary system}

We now compute the gradient of the cost function under Assumption \ref{ass_pol}.


\begin{lem} Let $J(u) = \mathcal{J}(\cM(u),u)$ for $u \in U_{ad}^{\CB{p}}$. The gradient $\nabla J(u) \in (U^p)^*$ is given by \CB{\begin{equation}
    \label{gradJ} (\nabla J(u),h)_{(U^p)^*, U^p} = \int^T_0 \int_{\Omega} \Phi^\top F_u(\cM(u),u)hdQ_T + \int^T_0 \int_{\Omega}\sum\limits_{i = 1}^{m_c} \zeta_iq_i(u)\vec{c}_i^\top hdQ_T 
\end{equation}} \CB{for any $h \in U^p$}, where $\Phi = \Phi(\cM(u),u)$ is the adjoint state defined in \eqref{ad_bi}.
\end{lem}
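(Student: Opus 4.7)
The plan is to apply the chain rule using the Fréchet differentiability of $\cM$ from Theorem \ref{dif_teo}, and then use the adjoint equation \eqref{ad_bi} to eliminate the linearized state $y = \cM'(u)h$. Under Assumption \ref{ass_pol}, each summand in $\mathcal{J}$ is differentiable with respect to its arguments, and the chain rule yields
\begin{equation*}
(\nabla J(u),h)_{(U^p)^*, U^p} = \sum_{i=1}^{m_s} \lambda_i \int_{Q_T} p_i(\cM(u))^{p_s^i - 1}\vec{a}_i^\top y\, dQ_T + \sum_{i=1}^{m_c} \zeta_i \int_{Q_T} q_i(u)\vec{c}_i^\top h\, dQ_T,
\end{equation*}
where $y = \cM'(u)h$ solves \eqref{VEC_SYS2der}. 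The second sum already matches the corresponding term in \eqref{gradJ}, so the work is entirely in rewriting the first sum in terms of $\Phi$ and the control perturbation $h$.

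To do this I would test \eqref{ad_bi} against $y$ and integrate over $Q_T$. Integration by parts in time eliminates the time-boundary contributions by virtue of $\Phi(\cdot, T) = 0$ and $y(\cdot, 0) = 0$, and integration by parts in space eliminates the spatial boundary contributions by virtue of the matching Neumann conditions $\nabla \Phi \cdot \mathbf{n} = 0$ and $\nabla y \cdot \mathbf{n} = 0$. This gives the identity
\begin{equation*}
\int_{Q_T} \Phi^\top (y_t - D\Delta y)\, dQ_T = \int_{Q_T} \Phi^\top [D_y F(\cM(u),u)]\, y\, dQ_T + \sum_{i=1}^{m_s} \lambda_i \int_{Q_T} p_i(\cM(u))^{p_s^i - 1}\vec{a}_i^\top y\, dQ_T.
\end{equation*}
Substituting $y_t - D\Delta y = F_u(\cM(u),u)h + F_y(\cM(u),u)y$ from \eqref{VEC_SYS2der} into the left-hand side and cancelling the matching $\Phi^\top F_y y = \Phi^\top [D_y F] y$ contributions leaves
\begin{equation*}
\sum_{i=1}^{m_s} \lambda_i \int_{Q_T} p_i(\cM(u))^{p_s^i - 1}\vec{a}_i^\top y\, dQ_T = \int_{Q_T} \Phi^\top F_u(\cM(u),u) h\, dQ_T,
\end{equation*}
which when reinserted into the chain-rule expression produces \eqref{gradJ}.

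The main obstacle is purely technical, namely confirming that the backward adjoint system admits a solution regular enough to legitimize the integrations by parts and the pairing $\Phi^\top F_u h$. Because $\cM(u), u \in L^\infty(Q_T)$, the coefficient matrix $[D_y F(\cM(u),u)]^\top$ has $L^\infty(Q_T)$ entries and the forcing $\sum_i \lambda_i p_i(\cM(u))^{p_s^i - 1}\vec{a}_i$ lies in the corresponding $L^{(p_s^i)'}(Q_T)$ slots; classical linear parabolic theory applied after reversing time then produces a unique $\Phi \in \bigoplus_i [W^{1,(p_s^i)'}(Q_T)]^6$ satisfying the prescribed Neumann and terminal conditions. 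Combined with $y \in Y^6$ from Theorem \ref{dif_teo}, all space-time pairings above are well-defined by Hölder's inequality and the standard trace theory, so the formal integration by parts can be rigorously justified by density of smooth functions.
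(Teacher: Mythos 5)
Your proposal is correct and follows essentially the same route as the paper: differentiate $J$ via the Fréchet differentiability of $\cM$, pair the adjoint equation \eqref{ad_bi} with the linearized state $y=\cM'(u)h$, integrate by parts using the terminal/initial and Neumann conditions, and cancel the $F_y$ terms to convert the state-sensitivity term into $\int_{Q_T}\Phi^\top F_u(\cM(u),u)h\,dQ_T$. The only difference is that you make explicit the regularity justification for the integrations by parts, which the paper leaves implicit.
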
\begin{proof} Let $h \in U^p_{ad}$ fixed and put $w = \cM'(u)h.$ From \eqref{ad_bi} we have \begin{align}
    &\lim\limits_{\varepsilon \to 0^+} \dfrac{1}{\varepsilon}\int^T_0 \int_{\Omega} \sum\limits_{i = 1}^{m_s} \dfrac{\lambda_i}{p_s^i} \{p_i(\cM(u+\varepsilon h))^{p_s^i}-p_i(\cM(u))^{p_s^i}\}dQ_T \\ &= \int^T_0 \int_{\Omega} \sum\limits_{i = 1}^{m_s}{\lambda_i} p_i(\cM(u))^{p_s^i-1}\nabla p(\cM(u))^\top\cdot wdQ_T \\ &=
  \int^T_0 \int_{\Omega} \left\{-{\Phi}_t - D\Delta \Phi - [F_y(\cM(u),u)]^*\Phi\right\}^\top wdQ_T \\ &= \int^T_0 \int_{\Omega} \Phi^\top\left\{w_t - D\Delta w - [F_y(\cM(u),u)]w\right\}dQ_T \\ &= \int^T_0 \int_{\Omega} \Phi^\top F_u(\cM(u),u)hdQ_T 
\end{align} whereby the formula follows only by adding the control part of the objective functional. \end{proof}

The characterization of the gradient \eqref{gradJ} along with the ajoint state $\Phi$ allow us to write the stationary system \eqref{optim_sys_lag} in an explicit way that can be used for numerical computation of the optimal controls.

\begin{theorem}[\bf First order optimality system (adjoint approach)] Let ${u^\circ}$ be an optimal solution to \eqref{e.w09181_cpyu}, ${y^\circ} = \cM({u^\circ}),$ $\Phi^\circ = \Phi({y^\circ},{u^\circ}).$ Then the following optimality system holds: 
\begin{equation}\label{optim_sys}
    \begin{split} 
    {u^\circ} &\in U_{ad}^{\CB{p}}; \\
    y_t^\circ - D\Delta {y^\circ} &= F({y^\circ},{u^\circ}), \\
    -{\Phi_t^\circ} - D\Delta \Phi^\circ &=  [D_yF({y^\circ},{u^\circ})]^\top \Phi + \sum\limits_{i=1}^{m_s}\lambda_ip_i(y)^{p_s^i-1}\vec{a_i}^\top;\\
    {y^\circ}(\cdot,0) &= y_0;\\
    \Phi^\circ(\cdot,T) &= 0;\\
    \nabla {y^\circ} \cdot \mathbf{n} &= 0;\\
    \nabla \Phi^\circ \cdot \mathbf{n} &= 0;\\
    \CB{(\nabla J({u^\circ}), u-{u^\circ})_{(U^p)^*, U^p}} & \geqslant 0, \qquad \forall u \in \CB{U_{ad}^p}.
    \end{split}
\end{equation} 
\end{theorem}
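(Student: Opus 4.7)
The plan is to recognize that the stationary system \eqref{optim_sys} is mostly a compilation of statements already established in this section, augmented with one genuinely new ingredient: the variational inequality that expresses the first order necessary condition for a constrained minimum. First I would dispose of the equations involving $y^\circ$ and $\Phi^\circ$: the state PDE together with the initial and Neumann conditions for $y^\circ$ hold by the very definition $y^\circ = \mathcal M(u^\circ)$ (Theorem \ref{thm:globalExistence} guarantees that such a classical solution exists globally and is unique), and the adjoint PDE with its terminal and Neumann conditions is exactly the definition $\Phi^\circ = \Phi(y^\circ, u^\circ)$ introduced in \eqref{ad_bi}.

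The real content lies in deriving the variational inequality. Here I would exploit the convexity of $U_{ad}^p$: for any $u \in U_{ad}^p$ and any $t \in (0,1]$, the convex combination $u^\circ + t(u - u^\circ) = (1-t)u^\circ + t u$ still lies in $U_{ad}^p$. Optimality of $u^\circ$ then forces $J(u^\circ + t(u - u^\circ)) - J(u^\circ) \geqslant 0$, and dividing by $t$ and passing to the limit $t \to 0^+$ would produce the desired inequality, provided $J$ is Fr\'echet (or at least Gateaux) differentiable at $u^\circ$ in the direction $u - u^\circ$. That differentiability is exactly what Theorem \ref{dif_teo} combined with the chain rule and the polynomial smoothness of $\mathcal J$ in both arguments delivers.

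The final piece is simply to substitute the explicit representation of $\nabla J$ obtained in the lemma immediately preceding the theorem, which expresses $\nabla J(u^\circ)$ through the adjoint state $\Phi^\circ$ and the partial $F_u(\mathcal M(u^\circ), u^\circ)$. The main obstacle, to the extent there is one, is a functional-setting bookkeeping issue: one must ensure that the chain rule is applied with compatible target spaces (differentiability of $\mathcal M$ into $[L^{p/2}(Q_T)]^6$ paired with differentiability of $\mathcal J$ in that slot), and that the duality pairing $(\cdot, \cdot)_{(U^p)^*, U^p}$ matches the gradient formula. Both are consistent with the framework already in place, so no new estimates are required beyond those proved earlier.
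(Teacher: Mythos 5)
Your proposal is correct and follows the same route the paper intends: the paper gives no explicit proof of this theorem, treating it as a direct compilation of the definition of $\cM$, the definition of the adjoint state in \eqref{ad_bi}, the differentiability result of Theorem \ref{dif_teo}, and the gradient formula \eqref{gradJ}, with the variational inequality being the standard consequence of minimizing a Gateaux-differentiable functional over the convex set $U_{ad}^p$ via the difference quotient $t^{-1}\left(J(u^\circ + t(u-u^\circ)) - J(u^\circ)\right) \geqslant 0$ as $t \to 0^+$. Your write-up in fact makes explicit the argument the paper leaves implicit, including the correct attention to the compatibility of the spaces in the chain rule.
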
 \CB{Here \eqref{optim_sys} represents the coupled system solved by the optimal state $y^\circ$ and adjoint state $\Phi^\circ$.}

\section{Simulations and Discussion} \label{sec:simulation}

As mentioned earliner in Section \ref{eoc}, for simulations we will use \begin{align}\label{sim_}
    \mathcal{J}(y,u) 
    &= \sum\limits_{i = 1}^{m_s} \dfrac{\lambda_i}{p_s^i}\|p_i(y)\|_{L^{p_s^i}(Q_T)}^{p_s^i} + \sum\limits_{i = 1}^{m_c} \dfrac{\zeta_i}{2}\|q_i(u)\|_{L^{2}(Q_T)}^{2}
\end{align}
with $m_s = 2$,  $p_s = (1,1), m_c = 3$ and the polynomials $p_1(\vec{x}) = x_2 + x_5, p_2(\vec{x}) = x_4 + x_5 + x_6$, $q_1(\vec{x}) = x_1$, $q_2(\vec{x}) = x_2,$ $q_3(\vec{x}) = x_3$ with $\zeta_i = \zeta$ ($i = 1,2,3$). Recalling that $y = (S,I,R,S^*,I^*,R^*)$ and $u = (\alpha, \mu, \nu)$ we have explicitly \begin{align}\label{sim_1}
    \mathcal{J}(y,u) &= \int^T_0 \int_{\Omega} [\lambda_1(I+I^*) + \lambda_2(S^* + I^* + R^*)]dxdt + \dfrac{\zeta}{2} \int_0^T \int_\Omega (\alpha^2 + \mu^2 + \nu^2)dxdt.
\end{align} 

We write the Lagrange multiplier $\Phi$ as $\Phi = (\Phi_S, \Phi_I, \Phi_R, \Phi_{S^*}, \Phi_{I^*}, \Phi_{R^*})$. From multi-variable calculus we know that \begin{align}
   & [F_y(y,u)]^\top = J_F(y,u)^\top
\end{align} where $J_F$ denotes the Jacobian matrix. Moreover, \begin{align}
    \sum\limits_{i=1}^{m_s}\lambda_ip_i(y)^{p_s^i-1}\vec{a_i}^\top = \begin{bmatrix}
        0 & \lambda_1 & 0& \lambda_2 & \lambda_1 + \lambda_2 & \lambda_2
    \end{bmatrix}^\top.
\end{align} The adjoint system is then explicitly given by \begin{align}\label{ad_bi22}\begin{split}
    &-{\Phi}_t - D\Delta \Phi = J_F(y,u)^\top \Phi + \begin{bmatrix}
        0 & \lambda_1 & 0& \lambda_2 & \lambda_1 + \lambda_2 & \lambda_2
    \end{bmatrix}^\top, \\
    &\nabla \Phi \cdot \mathbf{n} = 0,\\
    &\Phi(\cdot,T) = 0.\end{split}
\end{align} 
%

\CB{To approximately solve the optimization problem \eqref{e.w09181_cpyu} using the adjoint formulation, we employ a projected gradient descent  algorithm like that described in \cite[\S2.1]{Herzog}, computing the gradient of the cost functional using \eqref{gradJ}. We begin with a relatively large gradient descent rate and decrease this occasionally as the control maps begin to refine. This is fully detailed in Algorithm \ref{Algorithm1}. Referring to the parameters in the algorithm, we set $\text{TOL} = 10^{-3}, \eta = 0.1, c = 0.2, k = 10$, though other choices would likely work as well. } 

\begin{algorithm}[t!]
\caption{Projected Gradient Descent Algorithm for \eqref{e.w09181_cpyu}}
\hspace*{\algorithmicindent} Input values for all parameters included in Table \ref{tab:baselineParams}, as well as a convergence tolerance TOL, a maximum iteration count $N$, and a gradient descent rate $\eta > 0$. Also include parameters $c \in (0,1)$ and $k \in \mathbb N$ which be used to incrementally reduce the gradient descent rate as the control maps refine. Randomly initialize the control maps $u^{(0)} = (\alpha^{(0)},\mu^{(0)},\nu^{(0)})$ and state maps $y^{(0)}.$
\begin{algorithmic}
\FOR{$n = 1$ to $N$}
\STATE Compute $y^{(n)}$: the solution of \eqref{VEC_SYS} with control map $u^{(n-1)}$ 
\STATE Compute $\Phi^{(n)}$: the solution of \eqref{ad_bi22} with state map $y^{(n)}$ and control map $u^{(n-1)}$
\STATE Compute the gradient $\nabla J^{(n)}$ from \eqref{gradJ} using $y^{(n)},\Phi^{(n)},u^{(n-1)}$
\STATE Set $\tilde u = (\tilde \alpha, \tilde \mu, \tilde \nu) = u^{(n-1)} - \eta \nabla J^{(n)}.$
\STATE Set $u^{(n)} = (\min(\max(\tilde \alpha,\underline \alpha),1),\min(\max(\tilde \mu,0),\overline \mu), \min(\max(\tilde \nu,0),\overline \nu))$
\STATE Set $\text{CHANGE} =\max\Big\{\max_{i=1,\ldots,6} \|y_i^{(n)} - y_i^{(n-1)}\|_\infty ,\max_{i=1,2,3}\|u_i^{(n)} - u_i^{(n-1)}\|_\infty \Big\}$
\IF{$\text{CHANGE} < \text{TOL}$}
\STATE break loop
\ENDIF
\IF{$\text{mod}(n,k)=0$}
\STATE Reset $\eta \longleftarrow c\eta$
\ENDIF
\ENDFOR
\end{algorithmic}\label{Algorithm1}
\end{algorithm}

The authors of \cite{parkinson2023analysis} include analysis of basic reproduction numbers and provide several simulations which explore the intricacies of this model, including dependence of solutions on diffusion coefficients and different constant values of $\alpha,\mu,\nu.$ Because the novelty of this work is the optimal control of $\alpha,\mu,\nu$ with the goal of minimizing \eqref{e.w10151_gen}, we opt to fix parameter values and examine the behavior of the model and the optimal control maps on the parameters $\lambda_1,\lambda_2,\zeta$ which, respectively, serve as weights on the cost of total number of infections, total noncompliance, and $L^2$-norm of the control maps. For convenience, we break down the cost functional into these three terms: \begin{equation}
    \label{eq:costBreakdown}
    \mathcal J(y,u) = \lambda_1 I_{\text{total}} + \lambda_2 N^*_{\text{total}} + \frac \zeta 2 C_{\text{total}}
\end{equation} where 
\begin{equation}
    \label{eq:costs}
    \begin{split}
        I_{\text{total}} &= \int^T_0 \int_{\Omega} (I(x,t)+I^*(x,t))dxdt, \\
        N^*_{\text{total}} &= \int^T_0 \int_\Omega (S^*(x,t) + I^*(x,t)+R^*(x,t))dxdt, \\ 
        C_{\text{total}} &= \int^T_0\int_\Omega (\alpha(x,t)^2 + \mu(x,t)^2 + \nu(x,t)^2)dxdt.  
    \end{split}
\end{equation}

For all of our simulations, we use the two-dimensional spatial domain $\Omega = [-5,5]^2$, and a time horizon of $T = 200$. For both \eqref{PDE_SYS} and \eqref{ad_bi22}, we use a first-order approximation to time derivatives and a semi-implicit scheme for spatial derivatives where the diffusion is resolved implicitly and all other terms are treated explicitly. We include our baseline parameter values (including initial conditions) in table \ref{tab:baselineParams}. The baseline values of $\lambda_1,\lambda_2,\zeta$ were chosen so that with baseline parameter values each of the terms in \eqref{eq:costBreakdown} contributes roughly an equal amount to the total value of the cost functional. We note that in the uncontrolled case ($\alpha \equiv \underline \alpha$ and $\mu,\nu\equiv 0$), there is natural recovery from the disease whereas there is no ``recovery" from noncompliance. Accordingly, the entire population will eventually become noncompliant, while the disease will naturally die out or settle at some endemic equilibrium which does not constitute the entire population. Thus to balance the cost $\lambda_2 N^*_{\text{total}}$ associated with noncompliance and the cost $\lambda_1 I_{\text{total}}$ associated with the disease, we choose a baseline value for $\lambda_2$ which is significantly smaller than that of $\lambda_1$ (specifically, $\lambda_1 = 3, \lambda_2 = 1/50$). 

\begin{table}
\centering
    \begin{tabular}{|l c r|}
    \hline
        Parameter & Description & Value\\
    \hline
        $b(x,y)$ & natural birth rate & $0.1e^{-(x^2+y^2)}$\\
        $\xi$ & portion of newly introduced pop. which is compliant & 1\\
        $\beta$ & infection rate of disease & 5\\
        $\gamma$ & recovery rate & 1\\
        $\delta$ & natural death rate & 0.001 \\
        $\overline \mu$ & ``infection" rate of noncompliance & 1\\
        $\overline \nu$ & maximal rate of ``recovery" from noncompliance & 1\\
        $\underline \alpha$ & natural reduction in infectivity due to compliance & 0.1\\
        $d$ & mutual value of all diffusion coefficients & $0.1$ \\
        $\lambda_1$ & weight on cost of total infections & 3\\
        $\lambda_2$ & weight on cost of total noncompliance & 0.02\\
        $\zeta$ & weight on cost of control use & 0.2\\
        $S_0(x,y)$ & initial compliant susceptible population density & $e^{-(x^2+y^2)}$ \\
        $I_0(x,y)$ & initial compliant infectious population density & $0.1S_0(x,y)$ \\
        $S^*_0(x,y)$ & initial noncompliant susceptible population density & $0.05S_0(x,y)$ \\
        $I^*_0(x,y)$ & initial noncompliant infectious population density & $0.05I_0(x,y)$ \\
        \hline
    \end{tabular}
    \caption{\CB{Baseline parameter values for the simulation in figure \ref{fig:1}. We vary the cost weights $\zeta, \lambda_1,\lambda_2$ in figures \ref{fig:3}, \ref{fig:4}, and \ref{fig:5}}.}
    \label{tab:baselineParams}
\end{table}

We emphasize that the values of total infection cost, total noncompliance cost, and total control cost listed in \eqref{eq:costs} are only constant once $\lambda_1,\lambda_2,\zeta$ are fixed. Changing the cost weights will change the optimal control maps, which will give rise to different dynamics and thus change the values of \eqref{eq:costs}. \CB{In fact, exploring how the costs change as functions of the weights will be of particular interest for us.} One quantity of special interest will be the \emph{relative cost reduction} which is achieved by using the control maps when compared with the uncontrolled case. For fixed values of weights $\lambda_1,\lambda_2,\zeta$, this can be defined by \begin{equation} \label{eq:costReduction} \text{RelCR}(\lambda_1,\lambda_2,\zeta) = \frac{\mathcal J(y,\underline \alpha,0,0)-\mathcal J({y^\circ},\alpha^\circ,\mu^\circ,\nu^\circ)}{\mathcal J(y,\underline \alpha,0,0)} \in [0,1], \end{equation} where $\alpha^\circ, \mu^\circ,\nu^\circ$ are the optimal control maps associated with the weights $\lambda_1,\lambda_2,\zeta.$ In all cases, we will have $\mathcal J({y^\circ},\alpha^\circ,\mu^\circ,\nu^\circ) \le \mathcal J(y,\underline \alpha,0,0)$ since $\alpha^\circ,\mu^\circ,\nu^\circ$ achieve the infimum value. A scenario where $\mathcal J({y^\circ},\alpha^\circ,\mu^\circ,\nu^\circ) = \mathcal J(y,\underline \alpha,0,0)$ would yield $\text{RelCR}=0$ indicating that there is no reduction in cost due to the use of controls (or in other words, a scenario where the optimal plan is to neglect the use of controls entirely). By contrast, a scenario where $\mathcal J(y^\circ,\alpha^\circ,\mu^\circ,\nu^\circ) \approx 0$ would yield $\text{RelCR} \approx 1$, indicating that the use of controls can almost entirely eliminate cost. In practice, we will multiply $\text{RelCR}$ by $100$ so as to report the result as a percentage reduction in cost.  

One final note---before we present the results of simulations---is that all of these parameter values are entirely synthetic and do not represent a high-fidelity effort to model any given epidemic in any given region. Rather, we are interested in drawing qualitative conclusions regarding our optimal control problem and the manners in which a policy-maker's intervention can affect disease spread when there is a noncompliant portion of the population.

In figure \ref{fig:1}, we simulate the model with the baseline value of parameters (table \ref{tab:baselineParams}) in the absence (top) and presence (bottom) of controls. In these images (as in several ensuing images), we display the following quantities as functions of time, beginning from top left panel: \begin{itemize}
\item[(1)] Total susceptible population: $\|S(\cdot,t)+S^*(\cdot,t)\|_{L^1(\Omega)}$
\item[(2)] Total infected population: $\|I(\cdot,t)+I^*(\cdot,t)\|_{L^1(\Omega)}$
\item[(3)] Total control of the infection: $\|\alpha(\cdot,t)\|_{L^1(\Omega)}$
\item[(4)] Total compliant population: $\|S(\cdot,t)+I(\cdot,t)+R(\cdot,t)\|_{L^1(\Omega)}$
\item[(5)] Total noncompliant population: $\|S^*(\cdot,t)+I^*(\cdot,t)+R^*(\cdot,t)\|_{L^1(\Omega)}$
\item[(6)] Total control of compliance: $\|\mu(\cdot,t)\|_{L^1(\Omega)}$ and $\|\nu(\cdot,t)\|_{L^1(\Omega)}$
\end{itemize} All the populations have been normalized by dividing by the total population (which changes slightly in time due to births $b(x,y)$ and deaths $\delta$), so all these represent percentages of the total population. 

\begin{figure}[!ht]
\centering 
\includegraphics[width = \textwidth]{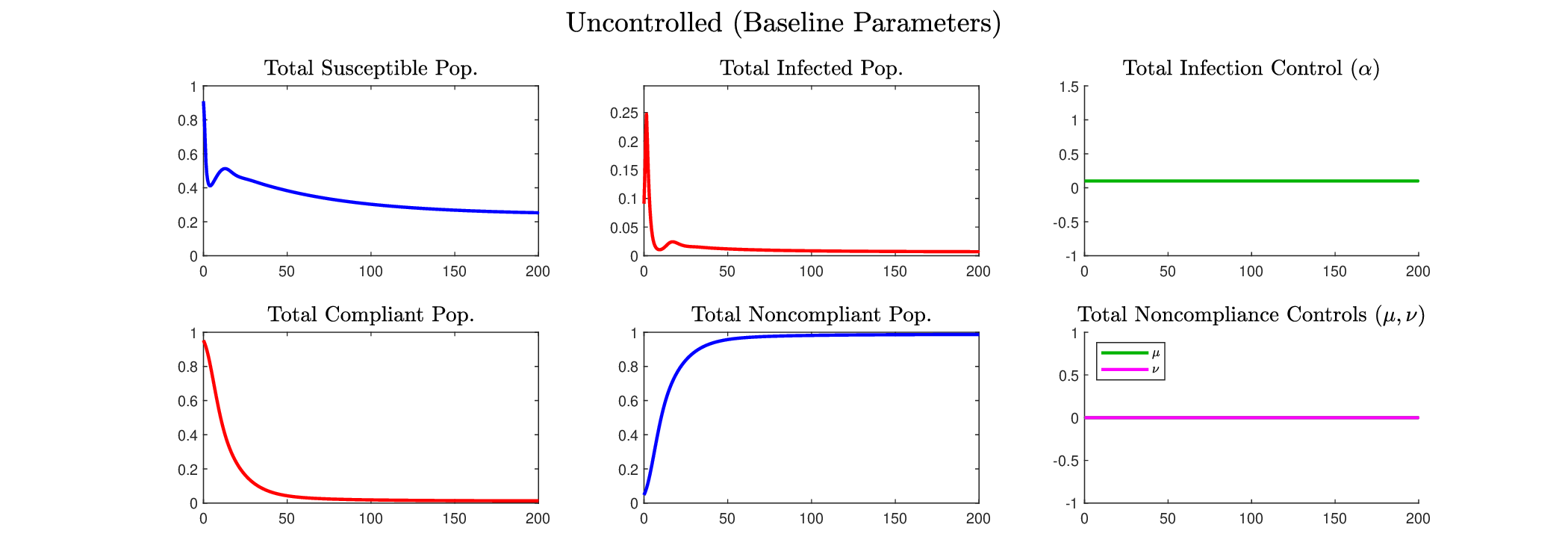}\\

\vspace{0.2cm}
\rule{0.8\textwidth}{1pt}
\vspace{0.2cm}

\includegraphics[width = \textwidth]{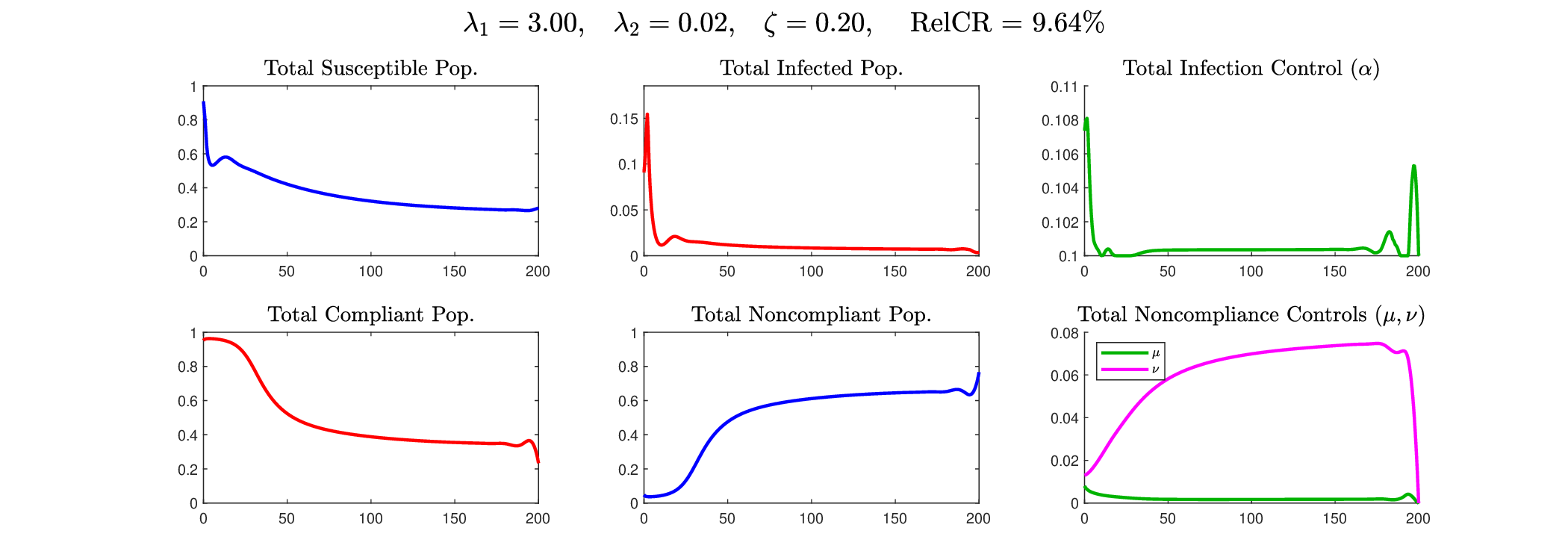}
    \caption{Dynamics of the model with baseline parameters (table \ref{tab:baselineParams}) in the absence (top) and presence (bottom) of controls. We notice that in the controlled case, the optimal $\alpha(\cdot,t)$ is primarily used near the beginning of the dynamics to decrease the first wave of infections. The optimal $\mu(\cdot,t)$ is hardly used at all, and the optimal $\nu(\cdot,t)$ is used throughout. This has the effect that the total noncompliant population settles at a lower portion of the population. The variation in the controls as the end of the dynamics should be seen as artificial: they are there because the policy-maker is aware of the time-horizon $T = 200$, and can slightly decrease costs by drastically altering controls for the final few time steps. Overall, with these values of $\lambda_1,\lambda_2,\zeta$, the optimal controls achieve a $9.64\%$ relative cost reduction against the uncontrolled scenario, reducing the cost from $\mathcal J(y,\underline \alpha,0,0) = 1.4071$ to $\mathcal J(y,\alpha^\circ,\mu^\circ,\nu^\circ) = 1.2715$. Snapshots of the control maps $\alpha(x,t),\mu(x,t),\nu(x,t)$ at different times are displayed in figure \ref{fig:2}.}
    \label{fig:1}
\end{figure} 

\begin{figure}[!ht]
\centering 
\includegraphics[width=0.24\textwidth]{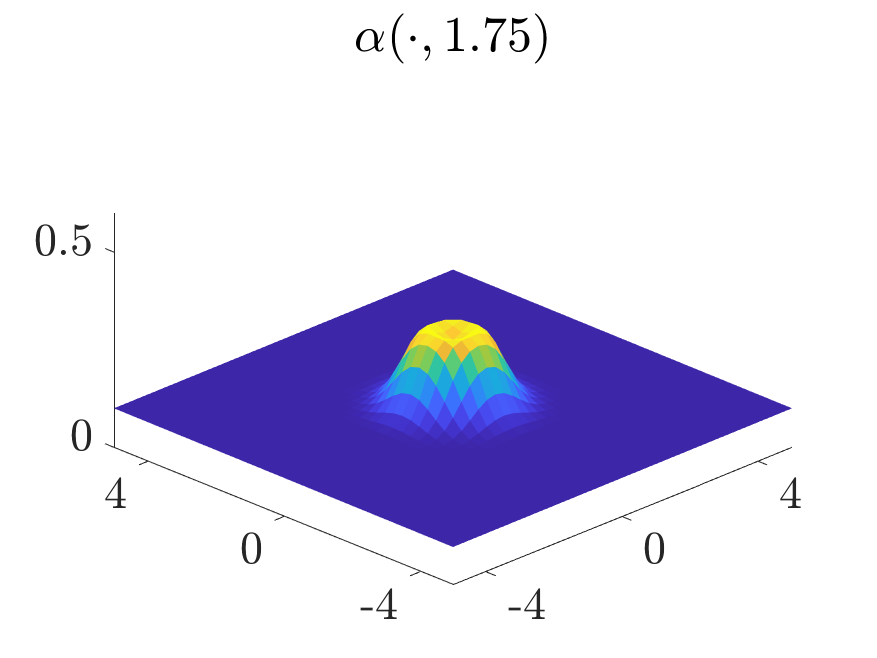}\,
\includegraphics[width=0.24\textwidth]{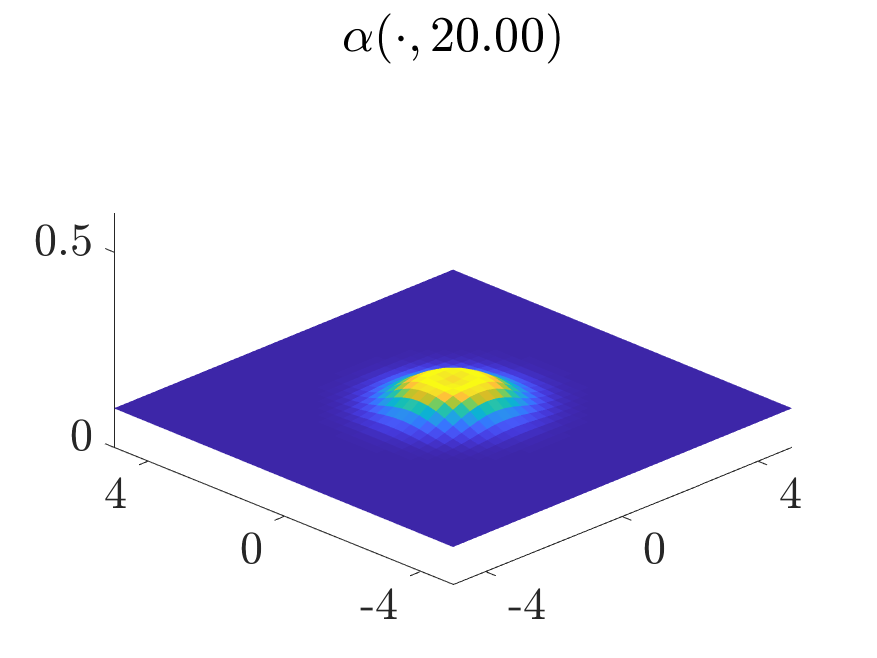}\,
\includegraphics[width=0.24\textwidth]{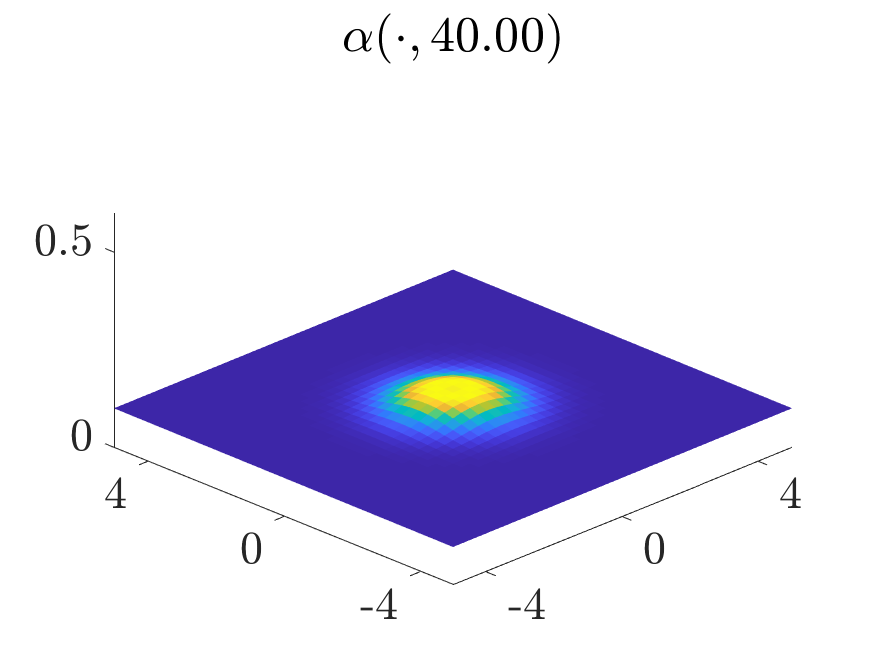}\,
\includegraphics[width=0.24\textwidth]{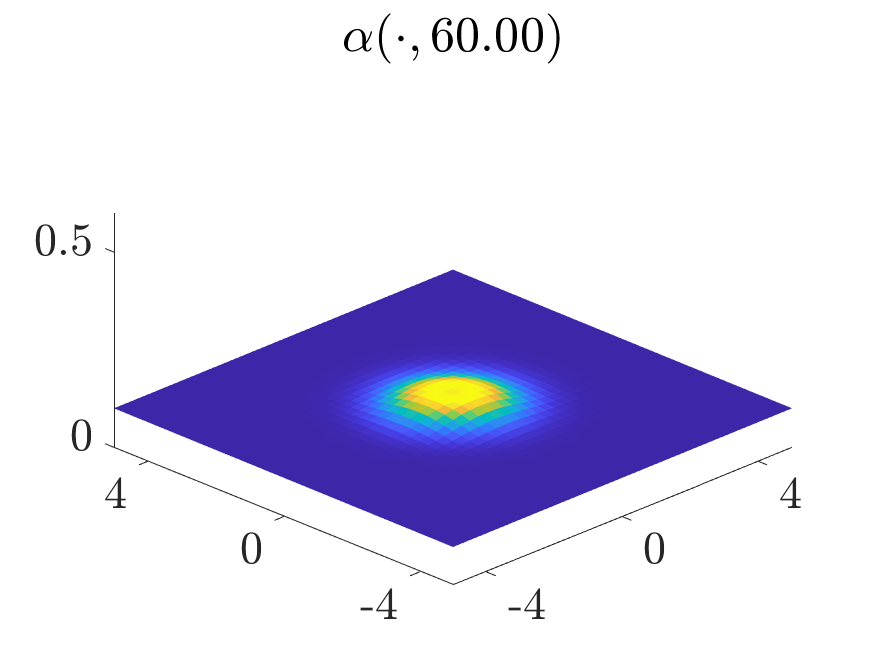}\\
\includegraphics[width=0.24\textwidth]{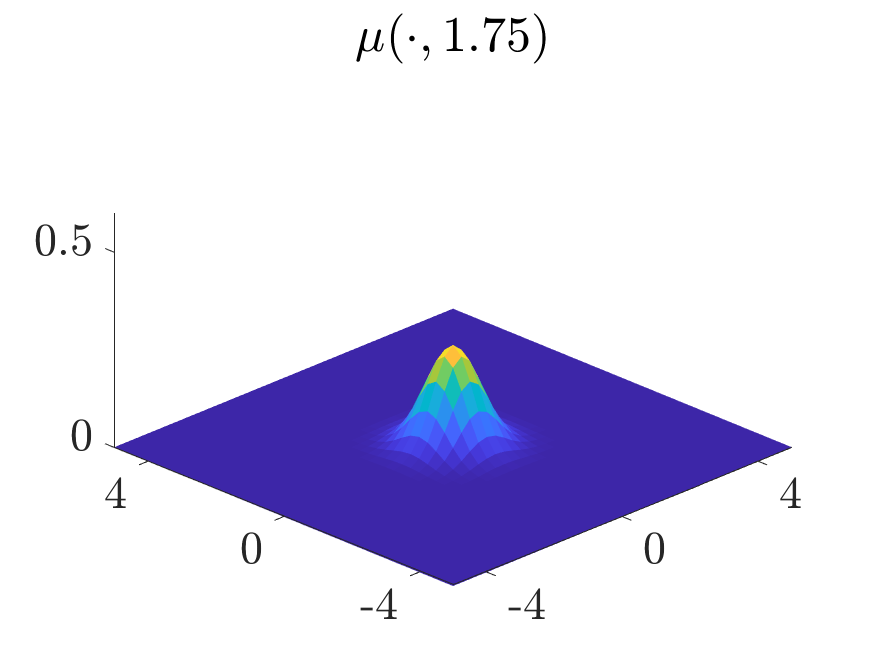}\,
\includegraphics[width=0.24\textwidth]{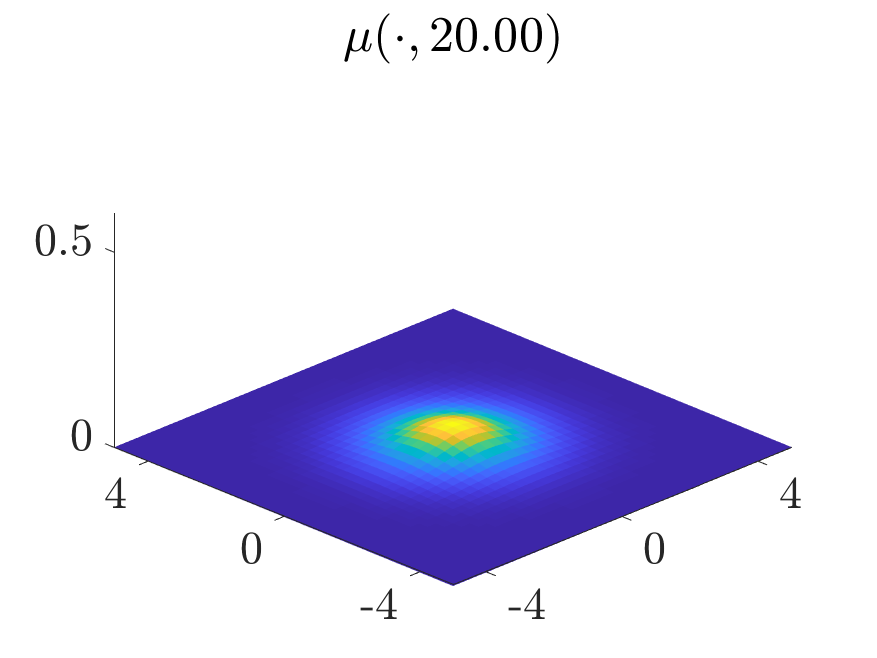}\,
\includegraphics[width=0.24\textwidth]{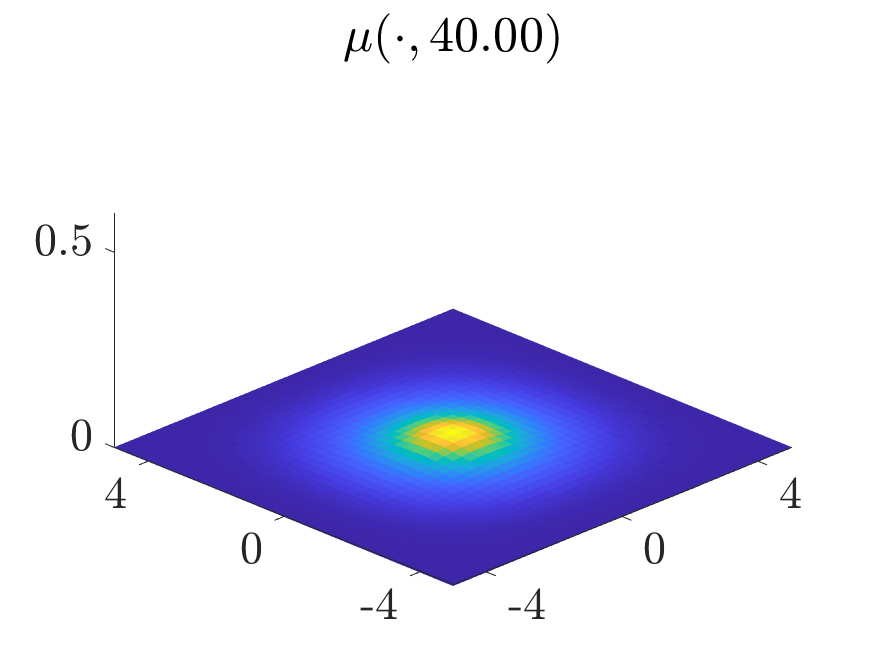}\,
\includegraphics[width=0.24\textwidth]{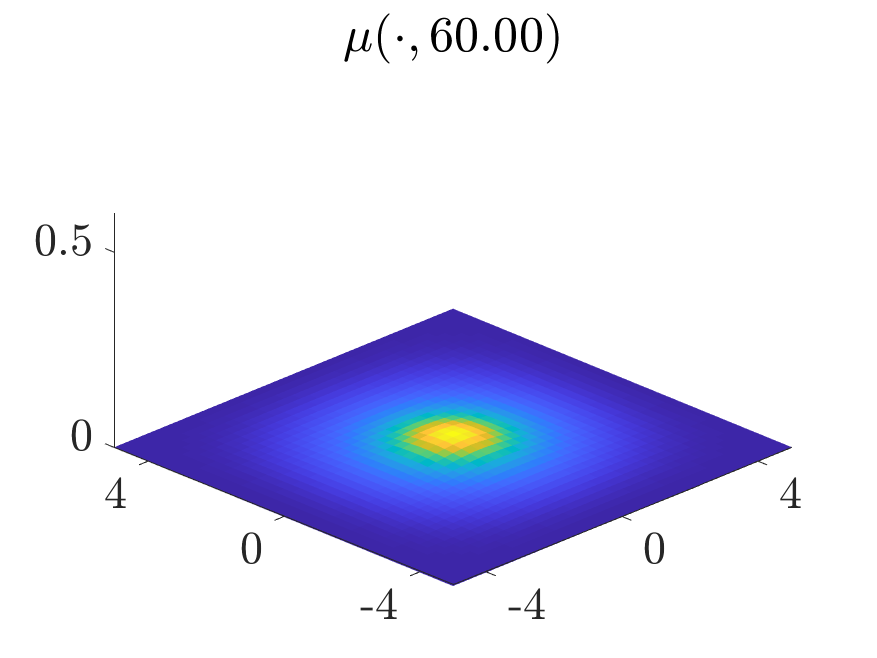}\\
\includegraphics[width=0.24\textwidth]{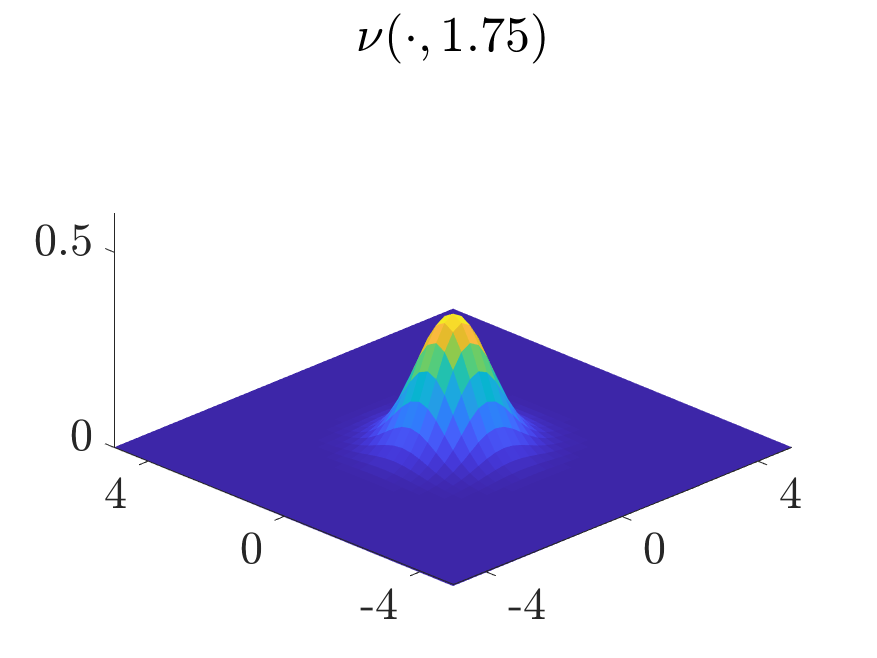}\,
\includegraphics[width=0.24\textwidth]{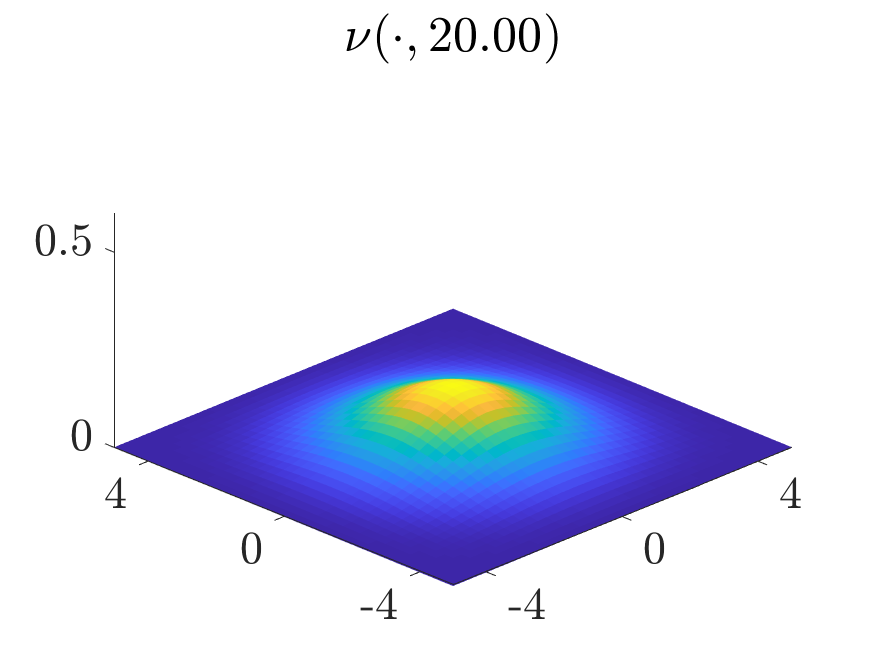}\,
\includegraphics[width=0.24\textwidth]{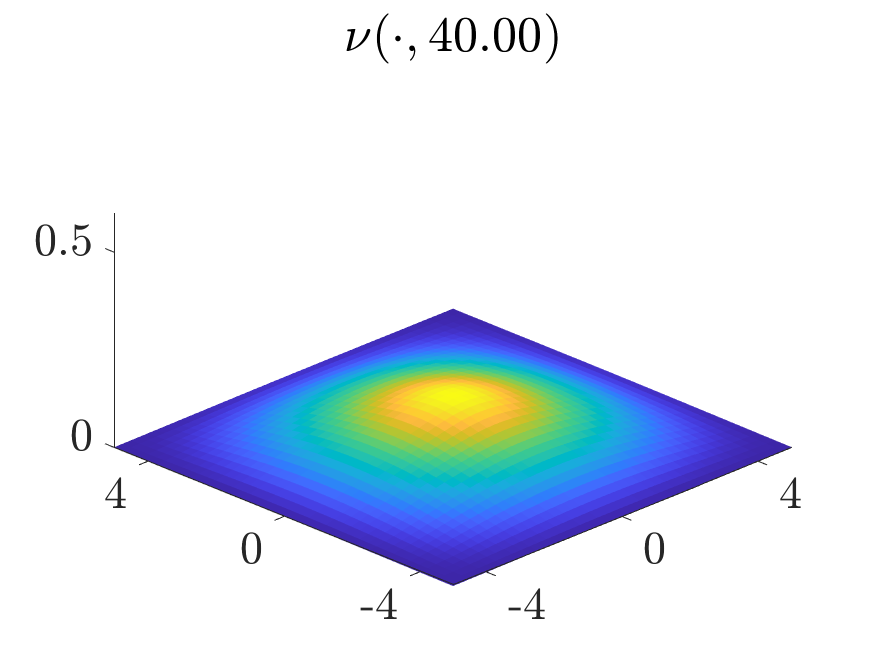}\,
\includegraphics[width=0.24\textwidth]{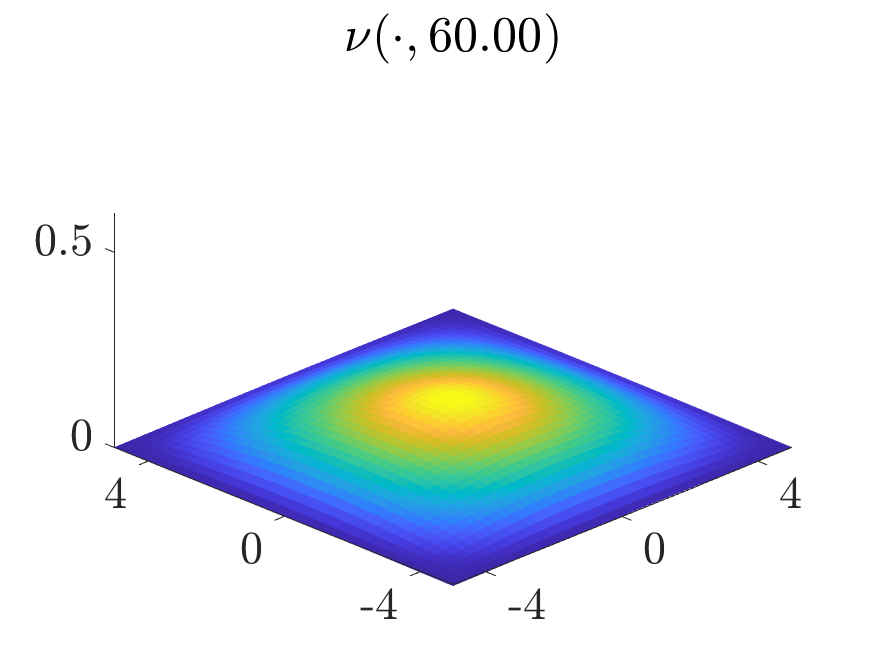}
\caption{Snapshots of the optimal control maps $\alpha(x,t),\mu(x,t),\nu(x,t)$ for different times $t$ for simulation with baseline parameters The time $t = 1.75$ corresponds to the first peak in infections seen in figure \ref{fig:1}. The control efforts are concentrated near the origin because $b(x,y)$ and $S_0(x,y)$ are Gaussians centered at the origin, meaning this is where the bulk of the population is. Note that as the infection dies out over time, $\alpha(x,t)$ and $\mu(x,t)$ seem to decrease. However, $\nu(x,t)$ decreases at its peak, but grows elsewhere. This is the primary mechanism used to decrease the final asymptote for the noncompliant population, and thus achieve a decreased total cost, despite the increase in the cost of the control.}
\label{fig:2}
\end{figure}

The affect of the control maps on the dynamics is summarized more fully in the captions, but in short, when using the baseline parameters, the optimal control strategy is to use $\alpha(\cdot,t)$ for small time to decrease the first peak in infections, thus lowering $I_{\text{total}}$. One then uses $\mu(\cdot,t)$ and (to a larger extent) $\nu(\cdot,t)$ throughout the dynamics so that the noncompliant population occupies a smaller portion of the total population, thus decreasing $N^*_{\text{total}}.$ In doing so, although some cost is being ``spent" on controls, the overall cost decreases by roughly $\text{RelCR} = 6.92\%$ from $\mathcal J(y,\underline \alpha,0,0) = 1.0041$ to $\mathcal J(y,\alpha^\circ,\mu^\circ,\nu^\circ) = 0.9346$. Figure \ref{fig:2} displays snapshots of the control maps at different times for this same simulation. The control maps are concentrated near the origin because that is where the majority of the population resides (since $b(x,y)$ and $S_0(x,y)$ are Gaussians centered at the origin). Generally, it will only be profitable to enforce controls where people are present, so the basic shape of the control maps is essentially the same in all the ensuing simulations. Accordingly, we focus on dynamics like those plotted in figure \ref{fig:1} for the ensuing simulations. 

For our next simulations, we vary $\zeta$---the cost associated with the control maps---while fixing all other parameters at their baseline values. The results of two such simulations are included in figure \ref{fig:3}. In the top panel, we take a much smaller $\zeta = 0.1$, meaning controls are much cheaper to implement. In this case, using very strong controls, one prevents the initial outbreak of the disease entirely. Spread of noncompliance is also significantly delayed, though when enough of the population becomes noncompliant a smaller and flatter outbreak of the disease occurs. In doing so, one achieves a relative cost reduction of $39.61\%$ against the uncontrolled case. Decreasing $\zeta$ further causes the outbreak to be delayed longer and flatten more, and leads to even larger relative cost reduction. In the bottom panel, we take a much larger $\zeta = 1$. Here the basic shapes of the control maps look very similar to their baseline shapes, but they are significantly scaled down since they are significantly more expensive to implement. Here, one can only achieve a relative cost reduction of $2.64\%$ against the uncontrolled case. This behavior persists upon increasing $\zeta$ further: the control maps are qualitatively similar, but scaled down further and further until one essentially reaches the uncontrolled scenario. Note that in the baseline case ($\zeta = 0.25$, figure \ref{fig:1}) there is still an initial outbreak which occurs before noncompliance spreads. This peak is no longer present when $\zeta = 0.2$, so the baseline value is very near the threshold for when it is optimal to suppress the initial outbreak.  

\begin{figure}[!ht]
    \centering 
\includegraphics[width = 0.9\textwidth]{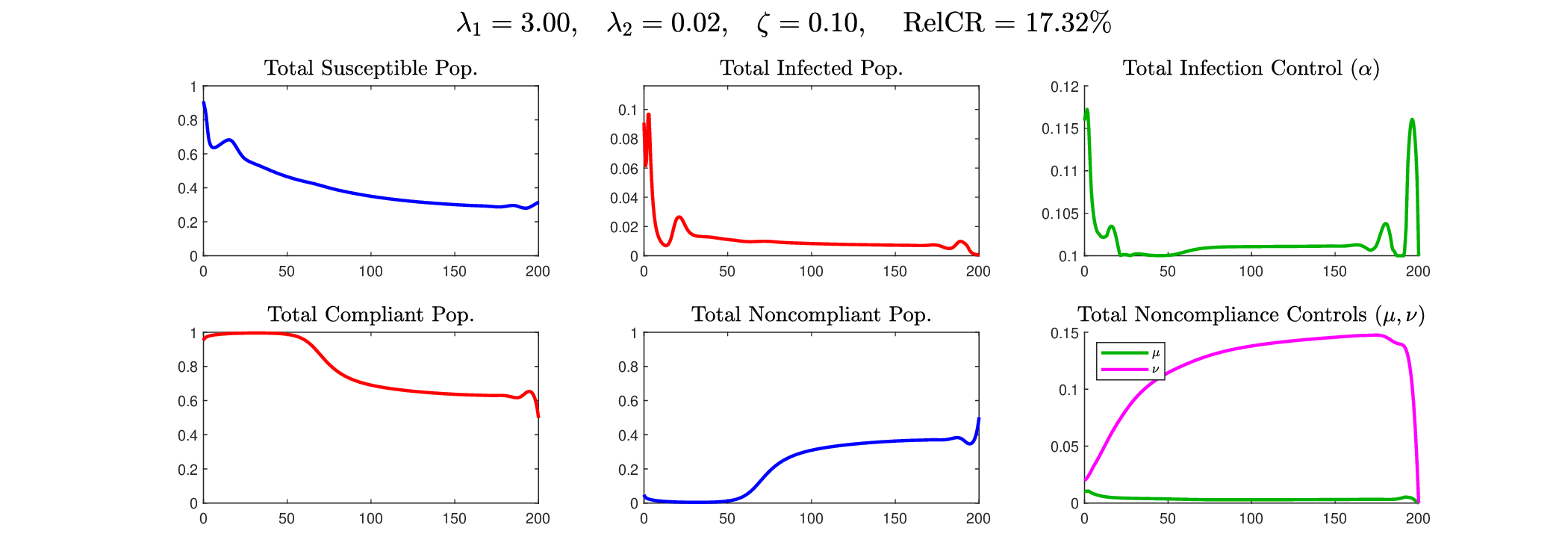}\\

\vspace{0.2cm}
\rule{0.8\textwidth}{1pt}
\vspace{0.2cm}

\includegraphics[width = 0.9\textwidth]{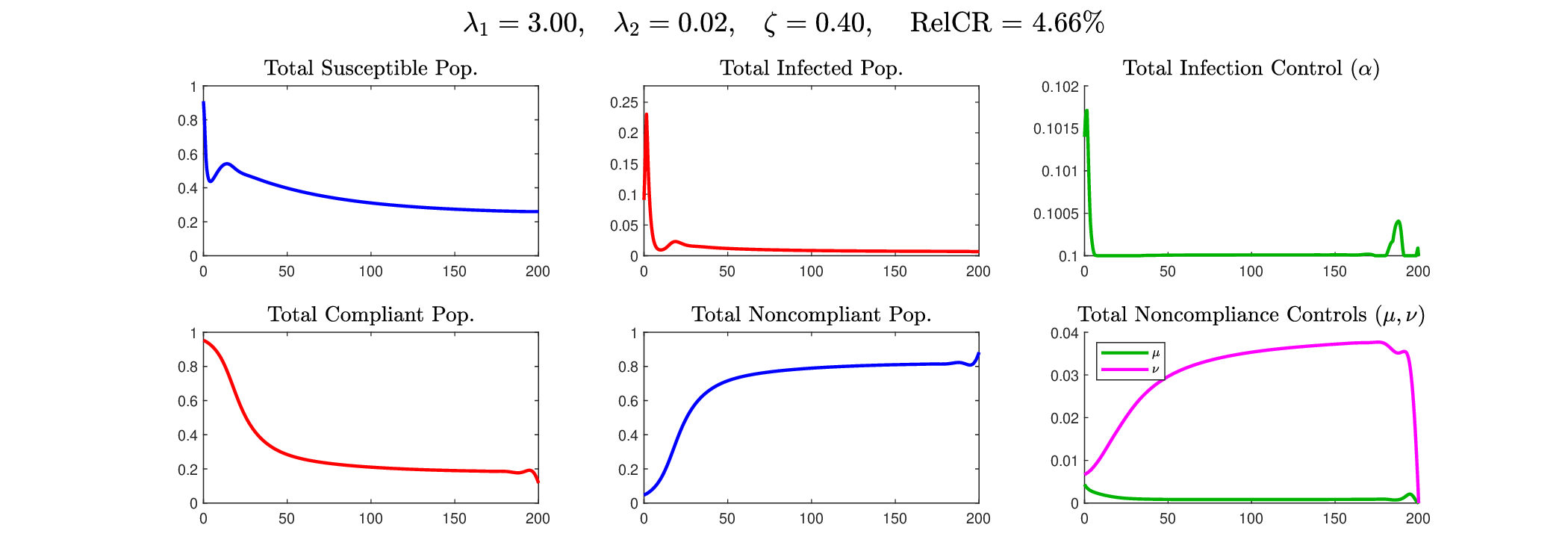}
\caption{When we decrease $\zeta$ to $0.1$ (top), the controls are cheap enough to implement that the optimal strategy is now to significantly suppress the initial outbreak, and to suppress noncompliance initially. Eventually noncompliance spreads and a small, more gradual outbreak occurs. In this case, the relative cost reduction against the uncontrolled scenario is $17.32\%$. When we increase $\zeta$ to $0.4$ (bottom), the results look qualitatively similar to the baseline case except the control maps are significantly scaled down because controls are more expensive to implement. In this case, the relative cost reduction against the uncontrolled scenario is only $4.66\%$}
\label{fig:3} 
\end{figure}

In our next two simulations, we demonstrate the affect of changing $\lambda_1$. Unsurprisingly, if all else is held equal and $\lambda_1$ is decreased (so that the policy-maker has less concern for the infection spreading), the only significant effect is that $\alpha$ is decreased. Otherwise, the results are qualitatively similar (in particular, $\mu$ and $\nu$ remain roughly the same because they are still required in order to reduce the spread of noncompliance). Rather than display these results, we focus on the case of increasing $\lambda_1$. Specifically, in figure \ref{fig:4}, we significantly increase the cost of total infections by setting $\lambda_1=10$. This would correspond to a very public-health-oriented policy-maker. In this case, if all other parameters are held at baseline values (top), the optimal strategy is to use very large $\alpha, \mu$, and $\nu$ values to suppress the outbreak entirely. Note that not only $\alpha$ is increased: in this case, it behooves one to increase $\mu$ and $\nu$ as well because in this model keeping the population compliant makes it easier to slow disease spread. Also notice that in this case $\alpha$ has very oscillatory behavior. For the sake of this simulation, we have significantly zoomed in on the graph of the total infected population: total infections immediately decrease and remain very small, but display tiny oscillations, and the oscillatory behavior in $\alpha$ is the health-conscious policy maker responding to these tiny oscillations. In figure \ref{fig:4} (bottom), we again set $\lambda_1 = 10$ but now also increase the cost of implementing controls by setting $\zeta = 1$. With these values, there is a serious trade-off to consider: the policy-maker strongly desires to stop the spread of the disease, but implementing controls is very costly. In this case, it seems suppressing the disease spread entirely is simply too costly. Instead, the infection is initially suppressed, but allowed to reach a sharp peak when noncompliant behavior becomes widespread enough. After the sharp peak, enough of the population has gained immunity that one can keep the infections relatively low despite the high cost of controls.

\begin{figure}[!ht]
    \centering 
\includegraphics[width = 0.9\textwidth]{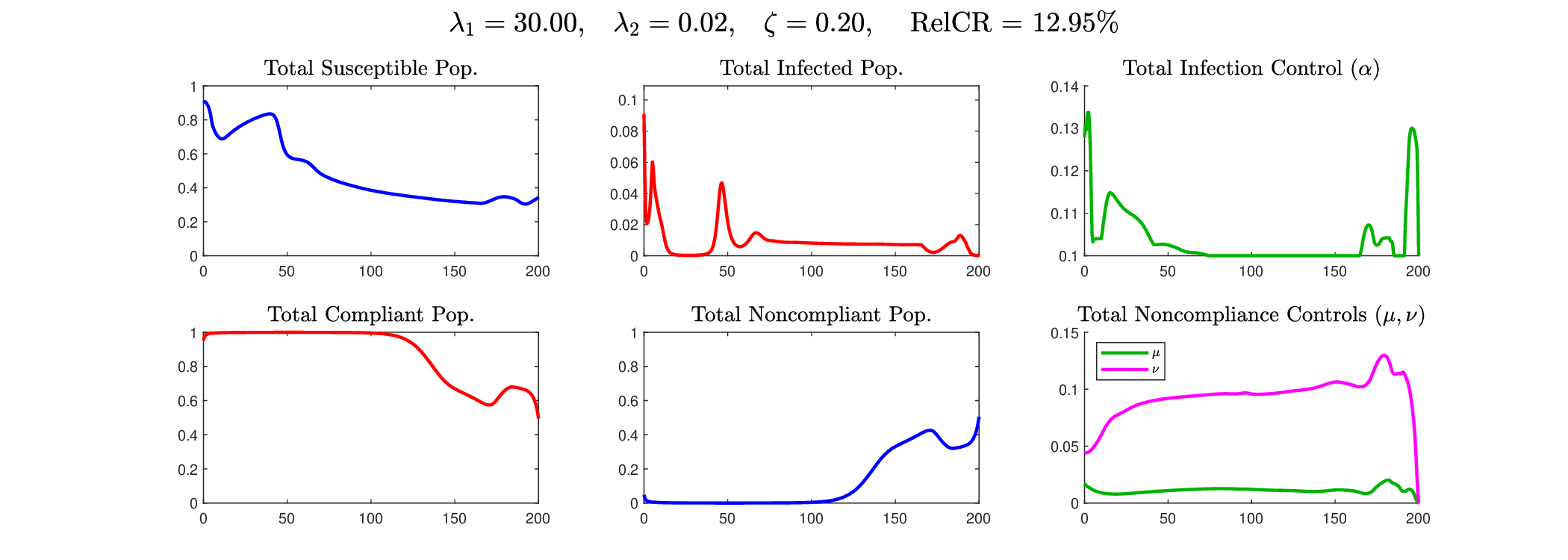}\\

\vspace{0.2cm}
\rule{0.8\textwidth}{1pt}
\vspace{0.2cm}

\includegraphics[width = 0.9\textwidth]{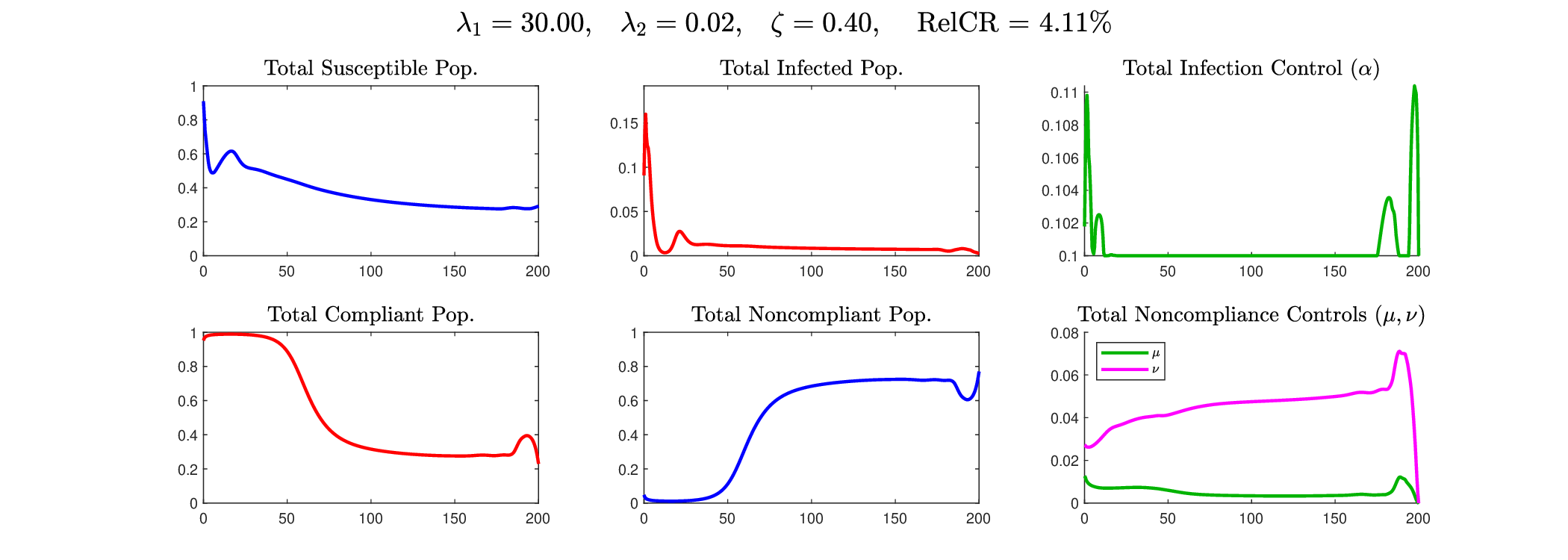}
\caption{We consider a public-health-oriented policy-maker by increasing $\lambda_1$ to $30$. In the top panel, we use baseline parameter values and the optimal policy is to use large control values to suppress the infection. In this case, the policy-maker is sensitive to small changes in the total infected population, leading to more oscillation in the control strategies. In the bottom panel, we increase $\lambda_1$ to 30 (so the policy-maker is health-conscious) but also increase $\zeta$ to 0.4 (so controls are costly to implement). Interestingly, in this case, the optimal use of $\alpha(\cdot,t)$ does not qualitatively change much from baseline (though it is larger), but noncompliance is suppressed more strongly. This demonstrates some of the synergy between the desire of the policy-maker and the use of controls: here the policy maker is health-conscious, but lowers infections by increasing control of both infections and noncompliance.}
\label{fig:4}
\end{figure}

For our last simulation, we look at the effect of changing $\lambda_2$. As with $\lambda_1$, the case of decreasing $\lambda_2$ is not particularly interesting. In this case, the policy-maker does not care to encourage compliance, so $\mu$ and $\nu$ are decreased. In turn, this makes $\alpha$ less useful since increasing $\alpha$ only provides a benefit to compliant populations. Thus when $\lambda_2$ is decreased, the dynamics qualitatively conform to the uncontrolled case in figure \ref{fig:1}. However, increasing $\lambda_2$ leads to an interesting change in the behavior when compared with the baseline parameters. In figure \ref{fig:5}, we display the results of the simulation when $\lambda_2$ is increased tenfold to $\lambda_2 = 0.2$. This would correspond to a policy-maker who is most concerned that people remain compliant. In this case, the primary strategy is increasing $\mu$ and $\nu$ so as to entirely eliminate noncompliance. However, this has the secondary effect of making $\alpha$ more useful since the whole population is compliant. Accordingly, $\alpha$ is also increased and the spread of the disease is also significantly reduced. This represents the potential for synergy between the different effects of the control variables. 

\begin{figure}[!ht]
    \centering 
\includegraphics[width = 0.9\textwidth]{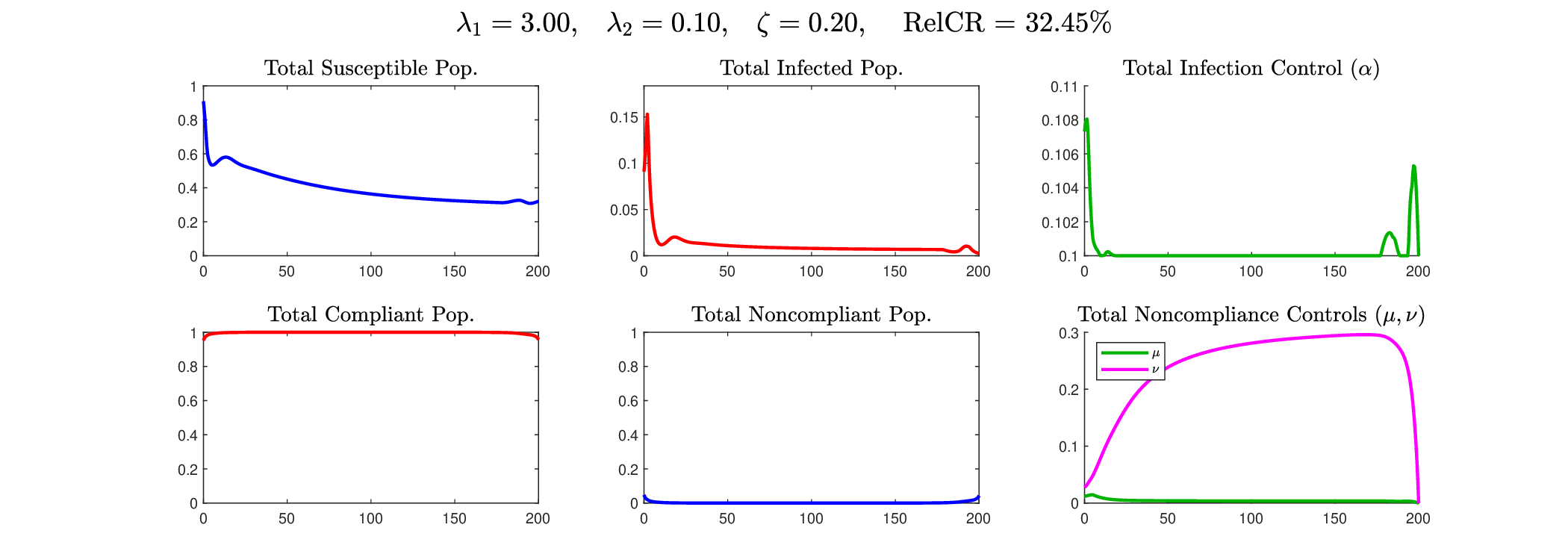}\\

\caption{We consider a compliance-oriented policy maker by increasing $\lambda_2$ fivefold to $\lambda_2 = 0.1$. In this case, the optimal strategy is to increase $\mu$ and $\nu$ so as to eliminate noncompliance entirely. However, having done so, $\alpha$ is more effective as a control since everyone is compliant, so the policy-maker still uses $\alpha$ as well, thus significantly slowing the spread of the disease. This once again demonstrates the strong potential for synergy between the control variables.}
\label{fig:5}
\end{figure}

\section{Conclusion \& Future Work} \label{sec:conclusion}

In this manuscript, we analyze optimal control of an epidemic model which accounts for policy-makers enacting NPIs, but some portion of the population refusing to comply with the NPIs. We prove existence and uniqueness of a solution to our system of reaction-diffusion PDEs. We then prove existence of optimal control plans and derive a Lagrangian-based first order stationary system. Finally, we simulate the model under many different parameter regimes which represent policy-makers with differing agendas and demonstrate the interesting behavior that this model exhibits with respect to optimal control plans.

We conclude by suggesting a few avenues of future work. First, in this work, we are mainly concerned with non-pharmaceutical intervention (NPI), and neglect to consider prevention measures such as vaccination. Including vaccination or other such ``one-shot" type controls, and seeing how this affects the use of NPIs within our model could prove very interesting. In another direction, similar work to this (without the optimal control aspects) has been carried out in a network theoretic setting \cite{peng}, and a study of the synthesis between our work and the network theoretic work may shed new light on the manner in which diseases spread when the spread is affected by coevolving spread of opinions. \CB{A third avenue forward would be to study a system like ours, but wherein compliance also spreads through social contagion. Mathematically, this would add further competing nonlinearities to \eqref{PDE_SYS}, which could complicate the analysis, but given the self-reinforcing nature of social learning, this would open the door to interesting practical questions regarding natural segregation of compliant and noncompliant populations, and if such segregation occurs, one could explore the role that governmental controls may have in guiding the different populations to different areas.}  Next, one could imagine extending this to an $N$-player differential game where $N$ distinct populations are all experiencing the pandemic while also interacting in various ways. For example, this could account for the interplay between the optimal strategies of neighboring countries, and could include both interior control as in our model, and boundary control to account for closing borders. Alternately viewing this as either a cooperative control problem or a decentralized control problem, one could superficially assess the level to which international cooperation is necessary in addressing a pandemic. \CB{Another direction of future inquiry is the incorporation of human behavioral effects into multi-group models like those in \cite{MG1,MG2,MG3,MG4,MG5} or hybrid discrete-continuous models like those in \cite{HybridModel1,HybridModel2,HybridModel3}. This has the potential to significantly improve their modeling fidelity.} Finally, all of our work to this point is abstract and qualitative. Incorporating data and estimating parameters would be a crucial step in validating the model and pushing toward real world utility. \\

\section*{Acknowledgments} CP was partially supported by NSF-DMS grant 1937229 through the Data Driven Discovery Research Training Group at the University of Arizona. WW was partially supported by an AMS-Simons travel grant.\\

\bibliographystyle{abbrvurl} 
\bibliography{ref.bib}

\begin{thebibliography}{10}

\bibitem{HybridModel1}
M.~Ala’raj, M.~Majdalawieh, and N.~Nizamuddin.
\newblock Modeling and forecasting of covid-19 using a hybrid dynamic model
  based on seird with arima corrections.
\newblock {\em Infectious Disease Modelling}, 6:98--111, 2021.

\bibitem{SC3}
M.~M. Ali, A.~Amialchuk, and D.~S. Dwyer.
\newblock The social contagion effect of marijuana use among adolescents.
\newblock {\em PloS one}, 6(1):e16183, 2011.

\bibitem{RD1}
E.~Avila-Vales and {\'A}.~G. P{\'e}rez.
\newblock Dynamics of a reaction--diffusion sirs model with general incidence
  rate in a heterogeneous environment.
\newblock {\em Zeitschrift f{\"u}r angewandte Mathematik und Physik}, 73(1):9,
  2022.

\bibitem{MG1}
V.~P. Bajiya, J.~P. Tripathi, V.~Kakkar, J.~Wang, and G.~Sun.
\newblock Global dynamics of a multi-group seir epidemic model with infection
  age.
\newblock {\em Chinese Annals of Mathematics, Series B}, 42(6):833--860, 2021.

\bibitem{beres2}
H.~Berestycki, B.~Desjardins, B.~Heintz, and J.-M. Oury.
\newblock Plateaus, rebounds and the effects of individual behaviours in
  epidemics.
\newblock {\em Scientific Reports}, 11(1):18339, 2021.

\bibitem{beres1}
H.~Berestycki, B.~Desjardins, J.~S. Weitz, and J.-M. Oury.
\newblock Epidemic modeling with heterogeneity and social diffusion.
\newblock {\em Journal of Mathematical Biology}, 86(4):60, 2023.

\bibitem{MG2}
D.~Bichara and A.~Iggidr.
\newblock Multi-patch and multi-group epidemic models: a new framework.
\newblock {\em Journal of mathematical biology}, 77(1):107--134, 2018.

\bibitem{NC1}
F.~Binte~Aamir, S.~M. Ahmad~Zaidi, S.~Abbas, S.~R. Aamir, S.~N. Ahmad~Zaidi,
  K.~Kanhya~Lal, and S.~S. Fatima.
\newblock Non-compliance to social distancing during covid-19 pandemic: A
  comparative cross-sectional study between the developed and developing
  countries.
\newblock {\em Journal of Public Health Research}, 11(1):jphr--2021, 2022.

\bibitem{SC7}
R.~M. Bond and B.~J. Bushman.
\newblock The contagious spread of violence among us adolescents through social
  networks.
\newblock {\em American journal of public health}, 107(2):288--294, 2017.

\bibitem{bongarti2022alternative}
M.~Bongarti, L.~D. Galvan, L.~Hatcher, M.~R. Lindstrom, C.~Parkinson, C.~Wang,
  and A.~L. Bertozzi.
\newblock Alternative siar models for infectious diseases and applications in
  the study of non-compliance.
\newblock {\em Mathematical Models and Methods in Applied Sciences},
  32(10):1987--2015, 2022.

\bibitem{SC6}
P.~J. Brantingham, B.~Yuan, and D.~Herz.
\newblock Is gang violent crime more contagious than non-gang violent crime?
\newblock {\em Journal of quantitative criminology}, 37:953--977, 2021.

\bibitem{HybridModel2}
G.~Cantin, C.~J. Silva, and A.~Banos.
\newblock Mathematical analysis of a hybrid model: Impacts of individual
  behaviors on the spreading of an epidemic.
\newblock {\em Networks and Heterogeneous Media}, 17(3):333--357, 2022.

\bibitem{Chang1}
L.~Chang, S.~Gao, and Z.~Wang.
\newblock Optimal control of pattern formations for an sir reaction--diffusion
  epidemic model.
\newblock {\em Journal of theoretical biology}, 536:111003, 2022.

\bibitem{Chang2}
L.~Chang, W.~Gong, Z.~Jin, and G.-Q. Sun.
\newblock Sparse optimal control of pattern formations for an sir
  reaction-diffusion epidemic model.
\newblock {\em SIAM Journal on Applied Mathematics}, 82(5):1764--1790, 2022.

\bibitem{Gumel1}
J.~Chen, B.~Espinoza, J.~Chou, A.~B. Gumel, S.~A. Levin, and M.~Marathe.
\newblock A simple model of coupled individual behavior and its impact on
  epidemic dynamics.
\newblock {\em Mathematical Biosciences}, 380:109345, 2025.

\bibitem{SC1}
N.~A. Christakis and J.~H. Fowler.
\newblock Social contagion theory: examining dynamic social networks and human
  behavior.
\newblock {\em Statistics in medicine}, 32(4):556--577, 2013.

\bibitem{NC3}
S.~F. Costa, S.~Vernal, P.~Giavina-Bianchi, C.~H.~M. Peres, L.~G. Dos~Santos,
  R.~E. Santos, R.~C. Santos, M.~C.~P. Francisco, F.~M. Satie, L.~M. Dal~Secco,
  et~al.
\newblock Adherence to non-pharmacological preventive measures among healthcare
  workers in a middle-income country during the first year of the covid-19
  pandemic: Hospital and community setting.
\newblock {\em American journal of infection control}, 50(6):707--711, 2022.

\bibitem{RD4}
K.~Deng.
\newblock Asymptotic behavior of an sir reaction-diffusion model with a linear
  source.
\newblock {\em Discrete \& Continuous Dynamical Systems-Series B}, 25(11),
  2019.

\bibitem{SC5}
D.~Eisenberg, E.~Golberstein, J.~L. Whitlock, and M.~F. Downs.
\newblock Social contagion of mental health: evidence from college roommates.
\newblock {\em Health economics}, 22(8):965--986, 2013.

\bibitem{MG3}
Z.~Feng, W.~Huang, and C.~Castillo-Chavez.
\newblock Global behavior of a multi-group sis epidemic model with age
  structure.
\newblock {\em Journal of Differential Equations}, 218(2):292--324, 2005.

\bibitem{RD2}
C.~Gai, D.~Iron, and T.~Kolokolnikov.
\newblock Localized outbreaks in an sir model with diffusion.
\newblock {\em Journal of Mathematical Biology}, 80(5):1389--1411, 2020.

\bibitem{Garvie1}
M.~R. Garvie and C.~Trenchea.
\newblock Optimal control of a nutrient-phytoplankton-zooplankton-fish system.
\newblock {\em SIAM Journal on Control and Optimization}, 46(3):775--791, 2007.

\bibitem{Garvie2}
M.~R. Garvie and C.~Trenchea.
\newblock Identification of space-time distributed parameters in the
  gierer--meinhardt reaction-diffusion system.
\newblock {\em SIAM Journal on Applied Mathematics}, 74(1):147--166, 2014.

\bibitem{Herzog}
R.~Herzog and K.~Kunisch.
\newblock Algorithms for pde-constrained optimization.
\newblock {\em GAMM-Mitteilungen}, 33(2):163--176, 2010.

\bibitem{RM}
R.~Iervolino, F.~Vasca, and D.~Tangredi.
\newblock A consensus policy for heterogeneous opinion dynamics.
\newblock In {\em 2018 IEEE International Symposium on Circuits and Systems
  (ISCAS)}, pages 1--5. IEEE, 2018.

\bibitem{Jang}
J.~Jang, H.-D. Kwon, and J.~Lee.
\newblock Optimal control problem of an sir reaction--diffusion model with
  inequality constraints.
\newblock {\em Mathematics and Computers in Simulation}, 171:136--151, 2020.

\bibitem{lady}
O.~A. Ladyzhenskaia, V.~A. Solonnikov, and N.~N. Ural'tseva.
\newblock {\em Linear and quasi-linear equations of parabolic type}, volume~23.
\newblock American Mathematical Soc., 1968.

\bibitem{RD5}
C.~Lei, H.~Li, and Y.~Zhao.
\newblock Dynamical behavior of a reaction-diffusion seir epidemic model with
  mass action infection mechanism in a heterogeneous environment.
\newblock {\em Discrete and Continuous Dynamical Systems-B}, pages 0--0, 2024.

\bibitem{Lenhart}
S.~Lenhart and J.~T. Workman.
\newblock {\em Optimal control applied to biological models}.
\newblock CRC press, 2007.

\bibitem{OptControlPDE2}
A.~Manzoni, A.~Quarteroni, and S.~Salsa.
\newblock {\em Optimal control of partial differential equations}.
\newblock Springer, 2021.

\bibitem{Hinze}
M.Hinze, R.Pinnau, M.Ulbrich, and S.Ulbrich.
\newblock {\em Optimization with PDE Constraints}.
\newblock Mathematical Modelling : Theory and Applications, 2009.

\bibitem{HybridModel3}
M.~Ochab, P.~Manfredi, K.~Puszynski, and A.~d’Onofrio.
\newblock Multiple epidemic waves as the outcome of stochastic sir epidemics
  with behavioral responses: a hybrid modeling approach.
\newblock {\em Nonlinear Dynamics}, 111(1):887--926, 2023.

\bibitem{Gumel2}
B.~Pant, S.~Safdar, M.~Santillana, and A.~B. Gumel.
\newblock Mathematical assessment of the role of human behavior changes on
  sars-cov-2 transmission dynamics in the united states.
\newblock {\em Bulletin of Mathematical Biology}, 86(8):92, 2024.

\bibitem{parkinson2023analysis}
C.~Parkinson and W.~Wang.
\newblock Analysis of a {R}eaction-{D}iffusion {SIR} {E}pidemic {M}odel with
  {N}oncompliant {B}ehavior.
\newblock {\em SIAM J. Appl. Math.}, 83(5):1969--2002, 2023.

\bibitem{peng}
K.~Peng, Z.~Lu, V.~Lin, M.~R. Lindstrom, C.~Parkinson, C.~Wang, A.~L. Bertozzi,
  and M.~A. Porter.
\newblock A multilayer network model of the coevolution of the spread of a
  disease and competing opinions.
\newblock {\em Mathematical Models and Methods in Applied Sciences},
  31(12):2455--2494, 2021.

\bibitem{pierre}
M.~Pierre.
\newblock Global existence in reaction-diffusion systems with control of mass:
  a survey.
\newblock {\em Milan Journal of Mathematics}, 78(2):417--455, 2010.
\newblock \href {https://doi.org/10.1007/s00032-010-0133-4}
  {\path{doi:10.1007/s00032-010-0133-4}}.

\bibitem{SD1}
N.~Prakash, B.~Srivastava, S.~Singh, S.~Sharma, and S.~Jain.
\newblock Effectiveness of social distancing interventions in containing
  covid-19 incidence: International evidence using kalman filter.
\newblock {\em Economics \& Human Biology}, 44:101091, 2022.

\bibitem{SC8}
J.~L. Rodgers, D.~C. Rowe, and M.~Buster.
\newblock Social contagion, adolescent sexual behavior, and pregnancy: a
  nonlinear dynamic emosa model.
\newblock {\em Developmental Psychology}, 34(5):1096, 1998.

\bibitem{NC4}
H.~Seale, C.~E. Dyer, I.~Abdi, K.~M. Rahman, Y.~Sun, M.~O. Qureshi,
  A.~Dowell-Day, J.~Sward, and M.~S. Islam.
\newblock Improving the impact of non-pharmaceutical interventions during
  covid-19: examining the factors that influence engagement and the impact on
  individuals.
\newblock {\em BMC Infectious Diseases}, 20:1--13, 2020.

\bibitem{NC2}
S.~G. Shumway, J.~D. Hopper, E.~R. Tolman, D.~G. Ferguson, G.~Hubble,
  D.~Patterson, and J.~L. Jensen.
\newblock Predictors of compliance with covid-19 related non-pharmaceutical
  interventions among university students in the united states.
\newblock {\em PLoS One}, 16(6):e0252185, 2021.

\bibitem{smoller}
J.~Smoller.
\newblock {\em Shock waves and reaction—diffusion equations}, volume 258.
\newblock Springer Science \& Business Media, 2012.

\bibitem{SC2}
J.~Sooknanan and D.~M. Comissiong.
\newblock When behaviour turns contagious: the use of deterministic
  epidemiological models in modeling social contagion phenomena.
\newblock {\em International Journal of Dynamics and Control}, 5:1046--1050,
  2017.

\bibitem{SD2}
K.~S. Sun, T.~S.~M. Lau, E.~K. Yeoh, V.~C.~H. Chung, Y.~S. Leung, C.~H.~K. Yam,
  and C.~T. Hung.
\newblock Effectiveness of different types and levels of social distancing
  measures: a scoping review of global evidence from earlier stage of covid-19
  pandemic.
\newblock {\em BMJ open}, 12(4):e053938, 2022.

\bibitem{OptControlPDE1}
F.~Tr{\"o}ltzsch.
\newblock {\em Optimal control of partial differential equations: theory,
  methods, and applications}, volume 112.
\newblock American Mathematical Soc., 2010.

\bibitem{Troltzsch}
F.~Tröltzsch.
\newblock {\em Optimal Control of Partial Differential Equations, Theory,
  Methods and Applications}.
\newblock American Mathematical Society, 2005.

\bibitem{SC4}
M.~W. van~den Ende, H.~L. van~der Maas, S.~Epskamp, and M.~H. Lees.
\newblock Alcohol consumption as a socially contagious phenomenon in the
  framingham heart study social network.
\newblock {\em Scientific Reports}, 14(1):4499, 2024.

\bibitem{MG4}
J.~Wang, J.~Pang, and X.~Liu.
\newblock Modelling diseases with relapse and nonlinear incidence of infection:
  a multi-group epidemic model.
\newblock {\em Journal of biological dynamics}, 8(1):99--116, 2014.

\bibitem{RD3}
N.~Wang, L.~Zhang, and Z.~Teng.
\newblock Dynamics in a reaction-diffusion epidemic model via environmental
  driven infection in heterogenous space.
\newblock {\em Journal of Biological Dynamics}, 16(1):373--396, 2022.

\bibitem{WuYinWang}
Z.~Wu, J.~Yin, and C.~Wang.
\newblock {\em Elliptic and parabolic equations}.
\newblock World Scientific Publishing Company, 2006.

\bibitem{MG5}
Q.~Yang and X.~Mao.
\newblock Extinction and recurrence of multi-group seir epidemic models with
  stochastic perturbations.
\newblock {\em Nonlinear Analysis: Real World Applications}, 14(3):1434--1456,
  2013.

\bibitem{Zhou}
M.~Zhou, H.~Xiang, and Z.~Li.
\newblock Optimal control strategies for a reaction--diffusion epidemic system.
\newblock {\em Nonlinear Analysis: Real World Applications}, 46:446--464, 2019.

\bibitem{Zhu}
L.~Zhu, X.~Huang, Y.~Liu, and Z.~Zhang.
\newblock Spatiotemporal dynamics analysis and optimal control method for an si
  reaction-diffusion propagation model.
\newblock {\em Journal of Mathematical Analysis and Applications},
  493(2):124539, 2021.

\end{thebibliography}

\end{document}